\newcommand{\Z}{\mathbb{Z}}
\newcommand{\ad}{\mathrm{ad}}
\newcommand{\Card}{\mathrm{Card }}
\newcommand{\Coker}{\mathrm{Coker}}
\renewcommand{\d}{\mathrm{d}}
\newcommand{\Der}{\mathrm{Der}}
\newcommand{\id}{\mathrm{id}}
\newcommand{\Id}{\mathrm{Id}}
\newcommand{\Ind}{\mathrm{Ind}}
\newcommand{\Inner}{\mathrm{Inner}}
\newcommand{\Ker}{\mathrm{Ker}}
\newcommand{\Res}{\mathrm{Res}}
\newcommand{\Tr}{\mathrm{Tr}}
\newtheorem{thm}{Theorem}
\newtheorem{prop}{Proposition}%[section]
\newtheorem{lemma}{Lemma}%[section]
\newtheorem{cor}{Corollary}%[section]
\newtheorem{conj}{Conjecture}
\newtheorem*{GLtheorem}{Garland-Lepowski Theorem}
\newtheorem*{Shtheorem}{Shirshov's Theorem}
\newtheorem*{SCtheorem}{Shirshov-Cohn Theorem}
\newtheorem*{Mactheorem}{Macdonnald's Theorem}
\newtheorem*{Glennietheorem}{Glennie's Theorem}
\newtheorem*{GlennieIdentity}{Glennie's Identity Theorem}
\newtheorem*{Ctheorem}{Cohn's Reversible Theorem}
\newtheorem*{Kutheorem}{Schreier-Kurosh-Cohn Theorem}
\newtheorem*{conj1weakest}{Conjecture 1 (weakest version)}
\newtheorem*{conj1weak}{Conjecture 1 (weak version)}
\newtheorem*{conj3}{Conjecture 3}
\newtheorem*{thm1imprecise}{Theorem 1 (imprecise version)}
\newtheorem*{thm2}{Theorem 2}
\newcommand{\fd}{\mathfrak{d}}
\newcommand{\fg}{\mathfrak{g}}
\newcommand{\fm}{\mathfrak{m}}
\newcommand{\fS}{\mathfrak{S}}
\newcommand{\fsl}{\mathfrak{sl}}
\newcommand{\fgl}{\mathfrak{gl}}
\newcommand{\Hom}{{\mathrm {Hom}}}
\font\small=cmr10
\title{ On the Free Jordan Algebras}
\author[$\dagger$]{ Iryna Kashuba}
\author [$\ddag$]{Olivier Mathieu}
\affil[$\dagger$]{\small  IME, University of Sao Paulo\\

Rua do Mat\~ao, 1010, 05586080 S\~ao Paulo\\

kashuba@ime.usp.br}
\affil[$\ddag$]{\small Institut Camille Jordan du CNRS\\

Universit\'e de Lyon\\ 

F-69622 Villeurbanne Cedex\\

mathieu@math.univ-lyon1.fr}
\begin{document}

\maketitle

\begin{abstract}
A conjecture for the dimension and the character of
the homogenous components of the free Jordan algebras 
is proposed. As a support of the conjecture, some numerical evidences
are generated by a computer  and some  new theoretical results are  proved. One 
of them is the cyclicity of the Jordan operad.
\end{abstract}

\section*{}{\it Introduction.}
Let $K$ be a field of characteristic zero and let
$J(D)$ be  the free Jordan algebra
with $D$ generators $x_1,\dots x_D$ over $K$. Then

\centerline{
$J(D)=\oplus_{n\geq 1}\, J_n(D)$}

\noindent where $J_n(D)$ consists of all degree $n$ homogenous Jordan polynomials  in the variables $x_1,\dots x_D$. The aim
of this paper is a conjecture about the character,
as a $GL(D)$-module, of each homogenous component $J_n(D)$ of 
$J(D)$. In the introduction,  only the conjecture for 
$\dim\,J_n(D)$ will be described, see Section 1.10 for the whole Conjecture 1.

\begin{conj1weakest} Set $a_n=\dim J_n(D)$. The sequence $a_n$ is the unique solution of  the following equation:

\noindent $({\cal E})$ \centerline{$
Res_{t=0}\, \psi\,\prod_{n}^{\infty} (1-z^n(t+t^{-1}) +z^{2n})^{a_n} dt=0$,}

\noindent where $\psi=Dzt^{-1}+(1-Dz)-t$.
\end{conj1weakest}

It is easy to see that equation ${\cal E}$ provides a recurrence relation to uniquely 
determine the integers $a_n$, but we do not know a closed formula.

Some computer 
calculations show that the predicted dimensions are
correct for some interesting cases. E.g., for
$D=3$ and $n=8$ the conjecture predicts that the space
of special identities has dimension 3, which is correct: those are the famous Glennie's Identities \cite{Gl}. Similarly for 
$D=4$  the conjecture
 agrees that  some  tetrads 
are missing in $J(4)$, as it has been observed by Cohn
\cite{Co}. Other interesting numerical evidences
are given in Section 2.
Since our input is the quite simple polynomial $\psi$, these numerical verifications provide a good support for the conjecture.

Conjecture 1 is elementary, but quite mysterious. 
Indeed it follows from two natural, but more sophisticated, conjectures
about Lie algebras cohomology.  Conjecture 3 will be now stated, see Section 3 for Conjecture 2.

Let ${\bf Lie_{T}}$  be the category of  Lie algebras $\fg$
on which $\fsl_2$ acts by derivation 
such that $\fg=\fg^{\fsl_2}\oplus \fg^{ad}$ as an $\fsl_2$-module.  
For  any Jordan algebra $J$, Tits has defined a Lie algebra
structure on the space $\fsl_2\otimes J\oplus \Inner\,J$
\cite{T}. It has been later generalized by  Kantor \cite{Kan}
and Koecher \cite{Koe}  and it is now called the $TKK$-construction and  denoted by $TTK(J)$. Here we use another refinement of Tits construction, due to Allison and Gao \cite{AG}. The corresponding Lie algebra will be denoted by $TAG(J)$, or, more simply, by $\fsl_2\,J$. 
The Lie algebra $\fsl_2\,J$ belongs to the category ${\bf Lie_{T}}$ 

 Since the $TAG$-construction is functorial, it is obvious that  
 $\fsl_2\,J(D)$ is a free 
Lie algebra in the category ${\bf Lie_{T}}$. Therefore  it is very natural to expect some cohomology vanishing, as the following

\begin{conj3} We have

\centerline{$H_k(\fsl_2\,J(D))^{\fsl_2}=0$, for any $k>0$.}
\end{conj3}

The conjecture, obvious for $k=1$,  follows from Allison-Gao paper \cite{AG} for  $k=2$. It is also proved for $k=3$ in Section 4. 
To connect the two conjectures, we first prove in Section 5

\begin{thm1imprecise}
The Jordan operad is cyclic.
\end{thm1imprecise}

This Theorem has some  striking consequences. E.g
the space ${\cal SI}(D)$ of multilinear special
identities of degree $D$, which is  is obviously a 
$\fS_D$-module, is indeed a $\fS_{D+1}$-module for any $D\geq 1$.
Also it allows to easily compute $\dim\,J_n(D)$ for
any $n\leq 7$, for any $D$.

In Section 6, we use a more technical version of Theorem 1 to prove that:

\begin{thm2}
Conjecture 3 implies Conjecture 1.
\end{thm2}

As a conclusion, the reader could find Conjecture 3  too optimistic.
However, it is clear from the paper that the
groups $H_*(\fsl_2\,J(D))^{\fsl_2}$ are strongly connected with 
the structure of the free Jordan algebras and they provide an interesting
approach for these questions.

\bigskip

\noindent {\it Acknowledgement.} We thank J. Germoni who performs some
of the computer computations of the paper.  We also would like to thank I. Shestakov for helpful discussions.
OM has been supported by UMR 5028 du CNRS. IK and OM have been supported by Cofecub Project 15716 
and the Udl-USP project "Free Jordan Algebras". IK has been supported by CNPQ 307998/2016-9.

\section*{1. Statement of Conjecture 1}

The introduction describes the {\it weakest version of Conjecture 1}, which determines the dimensions of the
homogenous components of $J(D)$. In this section,  Conjecture 1
will be stated, as well as a weak version of it.

Let  $\Inner\,J(D)$ be the Lie algebra of inner derivations of $J(D)$. 
Conjecture 1, stated in Section 1.9, provides  the character, as $GL(D)$-modules, of the
homogenous components of $J(D)$ and of $\Inner\,J(D)$.
The {\it weak version of Conjecture 1} is a formula only
for  the dimensions  of those homogenous components, see Section 1.10. 
 The Subsections 1.1 to 1.8 are devoted to define the main notations
of the paper, and to introduce the combinatorial notions which 
are required to state Conjecture 1.

\bigskip
\noindent{\it 1.1 Main notations and conventions} 

\noindent Throughout this paper, the ground field  $K$ has characteristic zero, and all algebras and vector spaces are
defined over $K$. 

Recall that a commutative algebra $J$ is called a  {\it Jordan algebra} if its product satisfies the following {\it Jordan identity}

\centerline{$x^{2}(yx)=(x^{2}y) x$} 

\noindent for any $x,y\in J $. For 
$x,\,y\in J$, let $\partial_{x,y}:J\rightarrow J$ be the map $z\mapsto x(zy)-(xz)y$. It follows from the Jordan identity that  $\partial_{x,y}$ is a derivation. A derivation 
$\partial$ of $J$ is called an {\it inner derivation} if it is
a linear combination of some $\partial_{x,y}$.
The space, denoted  $\Inner\, J$, of all inner derivations  is a  subalgebra of the Lie algebra $\Der\,J$ of all derivations of $J$.

In what follows, the positive integer $D$ will be given once for all. 
Let $J(D)$  be the free Jordan algebra on $D$ generators. This algebra,
and some variants, has been investigated in many papers by the 
Novosibirsk school of algebra, e.g. \cite{Med}, \cite{Med2}, \cite{RNA}, \cite {Z},\cite{Z2}.

\bigskip
\noindent {\it 1.2 The ring {\cal R(G)}}

\noindent Let $G$ be an algebraic reductive group
and let $Z\subset G$ be a central subgroup isomorphic to $K^{*}$. In what follows a rational $G$-module will be called a {\it $G$-module} or a {\it representation of $G$}.

Let $n\geq 0$. A $G$-module on which any
$z\in Z$ acts by $z^{n}$ is called a
{\it $G$-module of degree $n$}. Of course  this notion is relative to the the subgroup $Z$ and to the isomorphism
$Z\simeq K^{*}$. However we will assume that these data are given once for all.

Let $Rep_{n}(G)$ be the category of 
the finite dimensional $G$-modules of degree $n$. 
Set

 \centerline{${\cal R}(G)=\prod_{n=0}^{\infty} K_{0}(Rep_{n}(G))$}

\centerline{${\cal M}_{> n}(G)=\prod_{k >n} K_{0}(Rep_{k}(G))$}

\centerline{${\cal M}(G)={\cal M}_{> 0}(G)$.}

\noindent There are products  

\centerline{$K_{0}(Rep_{n}(G))\times K_{0}(Rep_{m}(G))\rightarrow  K_{0}(Rep_{n+m}(G))$}

\noindent induced by the
tensor  product  of the  $G$-modules. 
 Therefore ${\cal R}(G)$ is a ring
and ${\cal M}(G)$ is an ideal. 

Moreover 
${\cal R}(G)$ is complete with respect to the
${\cal M}(G)$-adic topology, i.e. the topology for which  the sequence   ${\cal M}_{> n}(G)$
is a basis of neighborhoods of 0. Any element $a$ of ${\cal R}(G)$ can be written as a formal series  

\centerline{$a=\sum_{n\geq 0}\, a_{n}$}

\noindent where $a_{n}\in K_{0}(Rep_{n}(G))$.

As usual, the class in $K_{0}(Rep_{n}(G))$ of a $G$-module 
$V\in Rep_{n}(G))$ is denoted by  $[V]$. Also
let $Rep(G)$ be the category of the $G$-modules $V$, with a decomposition
$V=\oplus_{n\geq 0}\,V_n$, such that  $V_n\in Rep_{n}(G)$ for all $n\geq 0$.
For such a module $V$,
its class $[V]\in {\cal R}(G)$ is defined by $[V]:=\sum_{n\geq 0}\,[V_n]$.

\bigskip
\noindent {\it 1.3 Analytic  representations of $GL(D)$ and their natural gradings}

\noindent A finite dimensional rational representation $\rho$ of $GL(D)$ is called {\it polynomial} if the map $g\mapsto \rho(g)$ is polynomial into the entries $g_{i,\,j}$ of the matrix $g$. The  center of $GL(D)$ is $Z=K^{*} \id$, relative to which the degree of a representation has been defined in the previous section. It is easy to show that  a polynomial representation $\rho$ has 
degree $n$ iff $\rho(g)$ is a degree $n$ homogenous polynomial into the entries $g_{i,\,j}$ of the matrix 
$g$. Therefore the notion of a polynomial representation of degree $n$ is unambiguously defined.

By definition an {\it analytic
$GL(D)$-module} is   
a $GL(D)$-module $V$  with a decomposition 

\centerline{$V=\oplus_{n\geq 0}\,V_n$} 

\noindent such that each component $V_n$ is a polynomial representation of degree $n$. In general $V$
is infinite dimensional, but it is always required that
each $V_n$ is finite dimensional. The decomposition
$V=\oplus_{n\geq 0}\,V_n$ of an analytic module $V$ is called
its {\it natural grading}.

The free Jordan algebra $J(D)$ and its associated Lie algebra $\Inner\,J(D)$ are examples of analytic $GL(D)$-modules. The natural grading of $J(D)$
is the previously defined decomposition $J(D)=\oplus_{n\geq 0}\,J_n(D)$
and the degree $n$ component of $\Inner\,J(D)$ is denoted 
$\Inner_nJ(D)$.

Let $Pol_n(GL(D))$ be the category of 
polynomial representations of $GL(D)$
of degree $n$, let $An(GL(D)$ be the category of all analytic  $GL(D)$-modules. Set 

\centerline{${\cal R}_{an}(GL(D))=\prod_{n\geq 0}\,K_0(Pol_n(GL(D)))$, and}

\centerline{${\cal M}_{an}(GL(D))=\prod_{n> 0}\,K_0(Pol_n(GL(D)))$.}

\noindent  The class $[V]\in {\cal R}_{an}(GL(D))$ of an analytic module is defined as before. 
 
 Similarly a finite dimensional rational representation $\rho$ of $GL(D)\times PSL(2)$ is called {\it polynomial} if the underlying 
 $GL(D)$-module is polynomial.
Also an {\it analytic
$GL(D)\times PSL(2)$-module} is   
a $GL(D)\times PSL(2)$-module $V$  with a decomposition 

\centerline{$V=\oplus_{n\geq 0}\,V_n$} 

\noindent such that each component $V_n$ is a polynomial representation of degree $n$.

\bigskip
\noindent {\it 1.4 Weights and Young diagrams}

\noindent
 Let $H\subset GL(D)$ be the subgroup of diagonal matrices.
 The subsection is devoted to the combinarics of
 the weights and the dominant weights of the polynomial
representations. 

A $D$-uple ${\bf m}=(m_1,\dots,\,m_D)$  of
non-negative integers  is called a {\it partition}.  It is called
a {\it partition of $n$} if 
$m_1+\dots+\,m_D=n$. The weight decomposition of an analytic
module $V$  is given by

\centerline{$V=\oplus_{\bf m}\, V_{\bf m}$}

\noindent where ${\bf m}$ runs over all the partitions, and where
$V_{\bf m}$ is the subspace of all $v\in V$
such $h.v=h_1^{m_1}h_2^{m_2}\dots\,h_D^{m_D}.v$ for all  $h\in H$ with diagonal entries $h_1, h_2,\dots\,h_D$. Relative to the natural grading
$V=\oplus_{n\geq 0}\, V_n$ of $V$, we have

\centerline{$V_n=\oplus_{\bf m}\, V_{\bf m}$}

\noindent where ${\bf m}$ runs over all the partition of $n$.

 With these notations, there is an isomorphism \cite{Macbook}
 
 \centerline
 {${\cal R}_{an}(GL(D))\simeq \Z[[z_1,\dots,z_D]]^{\fS_D}$}
 
 \noindent where the symmetric group $\fS_D$ acts by permutation of  the  variables $z_1,\dots,z_D$.
Then the class of an analytic module $V$ in 
${\cal R}_{an}(GL(D)$ is given by

\centerline{$[V]=\sum_{\bf m}\,
\dim V_{\bf m}\,\,z_1^{m1}z_2^{m_2}\dots z_D^{m_D}.$}
 
 \noindent For example, let
 $x_1,\dots,\,x_D$ be the generators of $J(D)$. Then for
 any partition
 ${\bf m}=(m_1,\dots,\,m_D)$,  $J_{\bf m}(D)$ is the space of Jordan polynomials $p(x_1,\dots,\,x_D)$ which are homogenous of degree $m_1$ into $x_1$, homogenous of degree $m_2$ into $x_2$
 and so on... Thus the class $[J(D)]\in {\cal R}_{an}(GL(D))$ encodes the same information as $\dim J_{\bf m}(D)$ for all ${\bf m}$.
 
Relative to the standard Borel subgroup, the dominant weights of polynomial representations are the partitions
${\bf m}=(m_1,\dots,\,m_D)$ with $m_1\geq m_2\geq\dots\geq m_D$
\cite{Macbook}. Such a partition, which is called a {\it Young diagram}, is represented by  a diagram with $m_1$ boxes on the first line, $m_2$ boxes on the second line and so on...  When a  pictorial notation is not convenient, it will be denoted as ${\bf (n_1^{a_1},n_2^{a_2}\dots)}$, where the symbol
 ${\bf n^a}$ means that the lign with $n$ boxes is repeated $a$ times. E.g.,  ${\bf (4^2,2)}$ is represented by
\medskip

\centerline{$\yng(4,4,2)$}

For a Young diagram ${\bf Y}$,  the total number of boxes, 
namely $m_1+\dots+m_D$   is called its {\it size} while its {\it height} is the number of boxes on the first column. 
 When  ${\bf Y}$ has heigth $\leq D$, the simple $GL(D)$-module
 with highest weight ${\bf Y}$ will be denoted by $L({\bf Y};D)$. 
 It is also
 convenient to set $L({\bf Y};D)=0$ if the heigth of ${\bf Y}$ is $>D$.
 For example $L({\bf 1^3};D)$ denotes $\Lambda^3\,K^D$,  which is zero
 for $D<3$.

\bigskip
\noindent
{\it 1.5  Effective elements in 
${\cal R}(G)$.}

\noindent
The classes $[M]$ of the
$G$-modules $M$   are  called the {\it effective classes}
in ${\cal R}(G)$. Let  ${\cal M}(G)^{+}$ be the set of effective classes in ${\cal M}(G)$. Then any $a\in{\cal M}(G)$ can be written as
$a'-a"$, where $a',\,a"\in {\cal M}(G)^{+}$.

\bigskip
\noindent
{\it 1.6 $\lambda$-structure on the ring
${\cal R}(G)$} 

\noindent
The ring ${\cal R}(G)$ is endowed with a map
$\lambda:{\cal M}(G)\to{\cal R}(G)$.

First   $\lambda\,a$ is defined for $a\in {\cal M}^+(G)$. Any 
$a\in {\cal M}^+(G)$ is the class
of  a  $G$-module $V\in Rep(G)$.  It is clear that  $M:=\Lambda\,V$ 
belongs to $Rep(G)$. Set 

\centerline{$\lambda\, a=\sum_{k\geq 0}\,(-1)^k\,[\Lambda^k\,V]$.}

\noindent  Moreover we have $\lambda(a+b)=\lambda\,a \, \lambda\,b$
for any $a,\,b\in {\cal M}^+(G)$. 

For an arbitrary $a\in {\cal M}(G)$, there are
$a',\,a"\in {\cal M}^+(G)$ such that $a=a'-a"$. Since
$\lambda \,a" = 1$ modulo ${\cal M} (G)$, it is invertible,
and $\lambda\,a$ is defined  by

\centerline{$\lambda\,a= (\lambda\,a")^{-1} \lambda\,a'$.}

\bigskip
\noindent
{\it 1.7 The decomposition in the ring ${\cal R}(G\times PSL(2))$}

\noindent
Let $G$ be a reductive group.
For any $k\geq 0$, let $L(2k)$ be the irreducible $PSL(2)$-module of dimension $2k+1$.
Since the family  $([L(2k)])_{k\geq 0}$ is a basis
of $K_0(PSL(2))$, any element 
$a\in K_0(G\times PSL(2))$ can be writen as a finite sum

\centerline {
$a=\sum_{k\geq 0}\, [a:L(2k)]\, [L(2k)]$}

\noindent where the multiplicities $[a:L(2k)]$
are elements of $K_0(G)$.

Assume now that $G$ is a subgroup of  $GL(D)$
which contains the central subgroup $Z=K^*\id$.
We consider  $Z$ as a subgroup of  $G\times PSL(2)$,
and therefore the notion of  a $G\times PSL(2)$-module of degree $n$ is well defined. Indeed it means that the  underlying $G$-module has degree $n$.
As before any $a\in{\cal R}(G\times PSL(2))$ can be decomposed as 

\centerline {$a=\sum_{k\geq 0}\, [a:L(2k)]\, [L(2k)]$}

 \noindent where $[a:L(2k)]\in {\cal R}(G)$. Instead of being a finite sum, it is a series whose convegence comes from the fact that
 
 \centerline{
 $[a:L(2k)] \to 0$ when $k \to \infty$.}

\bigskip
\noindent{\it 1.8 The elements $A(D)$ and $B(D)$  in the ring 
${\cal R}_{an}(GL(D))$}

\noindent
Let $G\subset GL(D)$ be a reductive subgroup containing
$Z=K^*\id$. Let  $K^D$ be the natural representation of $GL(D)$ and let 
$K^D\vert_G$ be its restriction to $G$.

\begin{lemma}\label{combi}

1. There are elements $a(G)$ and $b(G)$ in
${\cal M}(G)$ which are uniquely defined by the following two equations in ${\cal R}(G\times PSL(2))$

\centerline
{$\lambda (a(G)[L(2)]+b(G)):[L(0)]=1$}

\centerline
{$\lambda (a(G)[L(2)]+b(G)):[L(2)]=-[K^D\vert_G].$}

2. For $G=GL(D)$, set $A(D)=a(GL(D))$ and 
$B(D)=b(GL(D))$. Then $A(D)$ and $B(D)$
are in ${\cal M}_{an}(GL(D))$.

3.  Moreover 
$a(G)=A(D)\vert_G$ and
$b(G)=B(D)\vert_G$. 

\end{lemma}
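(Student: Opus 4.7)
The plan is to solve for $a(G)$ and $b(G)$ recursively along the $Z$-grading. Set $c := a[L(2)] + b \in {\cal M}(G \times PSL(2))$ and decompose $c = \sum_{n \geq 1} c_n$, where $c_n = a_n[L(2)] + b_n$ is the degree-$n$ component and $a_n, b_n \in K_0(Rep_n(G))$. The entire argument rests on the fact, recalled in Section 1.6, that $\lambda$ converts sums to products, so $\lambda c = \prod_{n \geq 1} \lambda c_n$ with $\lambda c_n = 1 - c_n + (\text{terms of degree} \geq 2n)$, since $\Lambda^k c_n$ lives in degree $kn$.

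The crucial consequence is that the degree-$n$ component $[\lambda c]_n$ equals $-c_n + P_n$, where $P_n$ is a universal expression built from tensor, symmetric and exterior operations applied only to $c_1, \ldots, c_{n-1}$. Taking the $[L(0)]$- and $[L(2)]$-isotypic multiplicities and using $[c_n:L(0)] = b_n$, $[c_n:L(2)] = a_n$, the two defining equations translate, for each $n \geq 1$, into
\[
-b_n + U_n(c_1,\ldots,c_{n-1}) = 0, \qquad -a_n + V_n(c_1,\ldots,c_{n-1}) = -\delta_{n,1}\,[K^D\vert_G],
\]
for universal expressions $U_n,V_n$. This is a triangular system, determining $a_n, b_n$ uniquely: $a_1 = [K^D\vert_G]$, $b_1 = 0$, and for $n \geq 2$ both are read off from data already computed. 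This is part 1.

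For part 2, specialize to $G = GL(D)$. The operations entering $U_n, V_n$ (tensor product, exterior power, and the $PSL(2)$-decomposition of $[L(2)]^{\otimes k}$) all preserve the subcategory of polynomial $GL(D)$-representations of prescribed degree. Since $[K^D]$ itself lies in $K_0(Pol_1(GL(D)))$, the recursion propagates polynomiality, so by induction $A(D)_n, B(D)_n \in K_0(Pol_n(GL(D)))$ and $A(D), B(D) \in {\cal M}_{an}(GL(D))$. For part 3, the restriction functor from $GL(D)$ to $G$ is exact, commutes with tensor and exterior powers and with the $PSL(2)$-isotypic decomposition, and sends $[K^D]$ to $[K^D\vert_G]$. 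Restricting the defining equations of $A(D), B(D)$ therefore produces equations of exactly the form characterizing $a(G), b(G)$, and the uniqueness in part 1 forces $a(G) = A(D)\vert_G$ and $b(G) = B(D)\vert_G$.

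The delicate step is the degree analysis underlying the recursion, namely that $[\lambda c]_n = -c_n + P_n(c_1,\ldots,c_{n-1})$. This requires writing $c = c'-c''$ as a difference of effective classes, expressing $\lambda c = \lambda c' \cdot (\lambda c'')^{-1}$ and controlling this product degree-by-degree in the ${\cal M}$-adic topology; the need to invert $\lambda c''$ is exactly why the construction produces \emph{virtual} polynomial classes and not genuine modules. Once this triangularity is in hand, existence, uniqueness, analyticity, and compatibility with restriction all drop out formally.
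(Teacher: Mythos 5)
Your proposal is correct and takes essentially the same route as the paper: both arguments are degree-by-degree recursions along the $Z$-grading, relying on multiplicativity of $\lambda$ and the observation that $\lambda$ applied to a degree-$n$ class equals $1$ minus that class modulo degree $\geq 2n$. The only difference is bookkeeping—the paper builds a convergent sequence of partial-sum approximations and passes to the limit, whereas you directly identify the degree-$n$ homogeneous components and note that the resulting system is triangular—but these are two phrasings of the same induction.
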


\begin{proof}

In order to prove Assertion 1, 
some elements  $a_n$ and 
$b_n$ in ${\cal M}(G)$
are defined by induction by the following algorithm.
Start  with $a_0=b_0=0$. Then assume that
 $a_n$ and $b_n$ are already defined with the property that

\centerline
{$\lambda (a_n[L(2)]+b_n):[L(0)]=1$\hskip11mm modulo ${\cal M}_{>n}(G)$}

\centerline
{$\lambda (a_n[L(2)]+b_n):[L(2)]=-[K^D\vert_G]$ \hskip2mm modulo ${\cal M}_{>n}(G)$.}

\noindent  Let $\alpha$ and $\beta$ be in
$K_0(Rep_{n+1}(G))$ defined by 

\centerline
{$\lambda (a_n[L(2)]+b_n):[L(0)]=1 -\alpha$
\hskip11mm modulo ${\cal M}_{>n+1}(G)$}

\centerline
{$\lambda (a_n[L(2)]+b_n):[L(2)]=-[K^D\vert_G]-\beta$
\hskip3mm modulo ${\cal M}_{>n+1}(G)$.}

\noindent 
Thus set $a_{n+1}=a_{n}+\alpha$ and $b_{n+1}=b_n+\beta$.
Since we have

\noindent $\lambda(\alpha[L(2)]+\beta)=1-\alpha.[L(2)]-\beta$ modulo ${\cal M}_{>n+1}(G)$,  we get

\centerline
{$\lambda (a_{n+1}[L(2)]+b_{n+1}):[L(0)]=1$
\hskip11mm modulo ${\cal M}_{>n+1}(G)$}

\centerline
{$\lambda (a_{n+1}[L(2)]+b_{n+1}):[L(2)]=-[K^D\vert_G]$
\hskip2mm modulo ${\cal M}_{>n+1}(G)$,}

\noindent   and therefore
the algorithm  can continue.

Since $a_{n+1}-a_n$ and $b_{n+1}-b_n$ belong to
$K_0(Rep_{n+1}(G))$, the sequences $a_n$ and $b_n$ converge.
The elements $a(G):=\lim a_n$ and $b(G):=\lim b_n$
satisfies the first assertion. Moreover, it is clear that $a(G)$ and 
$b(G)$ are uniquely defined.

The second assertion follows from the fact that,
for the group $G=GL(D)$, all calculations arise in the ring
${\cal R}_{an}(GL(D))$: so the elements 
$A(D)$ and $B(D)$ are in ${\cal M}_{an}(GL(D))$.

For Assertion 3, it is enough to notice that
the pair $(a(G),b(G))$ and $(A(D)\vert_G,B(D)\vert_G)$
satisfy  the same equation, so they are equal.
\end{proof}

\bigskip
\noindent
{\it 1.9 The conjecture 1}  

\noindent After these long preparations, we can now state Conjecture 1.

\begin{conj} Let $D\geq 1$ be an integer.
 In ${\cal R}_{an}(GL(D))$ we have

\centerline{$[J(D)]=A(D)$ and $[\Inner\,J(D)]=B(D)$,}

\noindent  where the elements $A(D)$ and $B(D)$  are defined in 
Lemma \ref{combi}.

\end{conj}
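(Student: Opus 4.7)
The plan is to reduce the character identities of Conjecture 1 to the homological vanishing statement of Conjecture 3, following the strategy foreshadowed by Theorem 2. By the uniqueness part of Lemma \ref{combi}, the pair $(A(D), B(D))$ is the unique solution of the two equations
\[
\lambda\bigl(A(D)[L(2)] + B(D)\bigr) : [L(0)] = 1, \qquad \lambda\bigl(A(D)[L(2)] + B(D)\bigr) : [L(2)] = -[K^D].
\]
Hence it suffices to verify that $([J(D)], [\Inner J(D)])$ satisfies these same two equations. By the TAG construction, $[J(D)][L(2)] + [\Inner J(D)]$ coincides with $[\fsl_2\, J(D)]$ in $\mathcal{R}_{an}(GL(D)\times PSL(2))$, so both identities concern $\lambda[\fsl_2\, J(D)]$.

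The crucial tool is the Chevalley--Eilenberg complex $\Lambda^\bullet\fsl_2\, J(D)$, which is $GL(D)\times PSL(2)$-equivariant. Its Euler characteristic yields
\[
\lambda\bigl([\fsl_2\, J(D)]\bigr) = \sum_{k \geq 0} (-1)^k \bigl[H_k(\fsl_2\, J(D))\bigr]
\]
in $\mathcal{R}(GL(D) \times PSL(2))$. Projecting onto the $L(0)$-isotypic component, i.e.\ taking $\fsl_2$-invariants, kills every term with $k > 0$ by Conjecture 3, leaving $[H_0]^{\fsl_2} = 1$. This establishes the first equation.

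For the second equation, the functoriality of the TAG construction makes $\fsl_2\, J(D)$ the free object of $\mathbf{Lie_{T}}$ on the generating space $K^D \otimes L(2)$. Abelianization thus gives $H_0 = K$ and $H_1 = K^D \otimes L(2)$, whose $L(2)$-isotypic component is $[K^D]$, contributing $-[K^D]$ to $\lambda[\fsl_2\, J(D)] : [L(2)]$. One is therefore reduced to showing that $[H_k(\fsl_2\, J(D)) : L(2)] = 0$ for all $k \geq 2$.

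This last reduction is the main obstacle, since Conjecture 3 only controls the $\fsl_2$-invariant part of $H_k$, not its $L(2)$-isotypic part. I would attempt to bridge this gap using the $PSL(2)$-equivariance of the Chevalley--Eilenberg differential together with the cyclic structure on the Jordan operad announced in Section 5 (Theorem 1): cyclicity should promote $\fsl_2$-invariant information into $L(2)$-equivariant information by rotating an ``output slot'' of a Jordan monomial into an ``input slot.'' Concretely, the goal would be to construct a $PSL(2)$-equivariant resolution of $\fsl_2\, J(D)$ whose homology is visibly concentrated in the $L(0)$-isotypic component above degree $1$, so that the $L(2)$-part of $H_k$ is forced to vanish for $k\geq 2$. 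The technical refinement of Theorem 1 alluded to in the introduction is presumably exactly the tool needed to carry out this last step.
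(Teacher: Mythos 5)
The statement you are trying to prove is an open conjecture; no unconditional proof exists in the paper. What the paper does establish is that Conjecture 3 implies Conjecture 1 (Theorem 2 in Section 6), and your proposal is best read as an attempted reduction of the same kind. Judged on those terms, it correctly sets up the skeleton --- uniqueness of $(A(D),B(D))$, the Euler characteristic of the Chevalley--Eilenberg complex, the ease of the $L(0)$ equation, and the hardness of the $L(2)$ equation --- but it has two genuine gaps.

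First, you write that ``by the TAG construction'' one has $[J(D)][L(2)] + [\Inner J(D)] = [\fsl_2\,J(D)]$ in $\mathcal{R}_{an}(GL(D)\times PSL(2))$. This is not immediate. By definition $TAG(J) = \mathcal{B}J \oplus \fsl_2\otimes J$, where $\mathcal{B}J = \Lambda^2 J/R(J)$ and the natural map $\mathcal{B}J \to \Inner J$ is only a surjection. The equality $\mathcal{B}J(D)=\Inner J(D)$ is the content of Lemma \ref{nocenter}, whose proof already uses the vanishing of $H_k(\fsl_2\,J(D))^{\fsl_2}$ for odd $k$. So this identification is not a freebie from functoriality; it must be derived from the hypothesis, and the order of the argument matters.

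Second, and more substantially, you reduce the $L(2)$ equation to showing $[H_k(\fsl_2\,J(D)):L(2)]=0$ for all $k\geq 2$. That statement is precisely (half of) Conjecture 2, and it is strictly stronger than what is needed or what Conjecture 3 provides. The paper does \emph{not} prove adjoint vanishing; Theorem 2 circumvents it. The actual mechanism is: apply Conjecture 3 to $\fsl_2\,J(1+D)$ and observe that both $H_*(\fsl_2\,J(D))$ and its suspension $\Sigma H_*(\fsl_2\,J(D))$ occur as summands of $H_*(\fsl_2\,J(1+D))$. The first summand gives $\mathcal{E}_1$ as you say, but the suspension summand gives the vanishing of the $\fsl_2$-invariant Euler characteristic of the complex $\Lambda\,\fsl_2\,J(D)\otimes\Sigma\fsl_2\,J(D)$, which unpacks to
\[
[\lambda[\fsl_2\,J(D)]:L(0)]\cdot[\Sigma\mathcal{B}J(D)] + [\lambda[\fsl_2\,J(D)]:L(2)]\cdot[\Sigma J(D)] = 0.
\]
Theorem 1 (cyclicity) then enters not in the vague guise you suggest but as a concrete isomorphism $\Sigma\mathcal{B}J(D)\simeq\Sigma J(D)\otimes K^D$, so the displayed equation factors as $\bigl([K^D]+[\lambda[\fsl_2\,J(D)]:L(2)]\bigr)[\Sigma J(D)]=0$, and one concludes using that $\mathcal{R}_{an}(GL(D))$ has no zero divisors. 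This extracts the character identity $\mathcal{E}_2$ directly, with no need to control the adjoint part of $H_k$ at all. Your intuition that cyclicity ``rotates an output slot into an input slot'' is pointing in the right direction, but without the suspension trick and the passage to $1+D$ generators, the plan as written would require proving Conjecture 2, which the paper leaves open.
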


\bigskip
\noindent
{\it 1.10 The weak form of Conjecture 1}

\noindent
We will now state  the weak version of Conjecture 1 which  only involves  the dimensions of homogenous components of $J(D)$ and $\Inner\,J(D)$.

Here $G$ is the central subgroup $Z=K^* \id$ of 
$GL(D)$. As in the subsection 1.4, 
${\cal R}(Z)$ is identified  with $\Z[[z]]$. An 
$Z$-module $V\in Rep(G)$ is  a graded vector space
$V=\oplus_{n\geq 0}\, V_n$ and its class [V] is

\centerline{$[V]=\sum_n\,\dim\,V_n\,z^n.$}

\noindent Let $\alpha$ be a root of the Lie algebra $\fsl_2$ and set $t=e^{\alpha}$. Then $K_0(PSL(2))$ is the subring
 $\Z[t+t^{-1}]$ of $\Z[t,\,t^{-1}]$ consisting of the
 symmetric polynomials in $t$ and $t^{-1}$. If follows that 
 
 \centerline
 {${\cal R}(G\times PSL(2))=\Z[t+t^{-1}][[z]].$}

Next let $a\in K_0(PSL(2))$ and set
$a=\sum_i\, c_i\,t^i$. Since 
$[a:L(0)]=c_0-c_{-1}$ and
$[a:L(2)]=c_{-1}-c_{-2}$
\noindent it follows that 

\centerline{$[a:L(0)]=\Res_{t=0}\,(t^{-1}-1)a\,\d t$ and
$[a:L(2)]=\Res_{t=0}\,(1-t)a\,\d t$.}

\noindent Indeed the same formula holds when
$a$ and $b$ are in ${\cal R}(G\times PSL(2))$.
In this setting, Lemma \ref{combi} can be expressed as

\begin{lemma}\label{combi2} Let $D\geq 1$ be an integer.
There are two series
$a(z)=\sum_{n\geq n}\,a_n(D) z^n$ and 
$b(z)=\sum_{n\geq n}\,b_n(D) z^n$ in
$\Z[[z]]$ which are uniquely defined by the following two equations:

\centerline{$\Res_{t=0}\,(t^{-1}-1) \Phi\,\d t=1$}

\centerline{$\Res_{t=0}\,(1-t)\Phi\,\d t =-Dz$}

\noindent where
$\Phi=\prod_{n\geq 1}\, 
(1-z^n t)^{a_n} (1-z^n t^{-1})^{a_n}(1-z^n)^{a_n+b_n}$, 
$a_n=a_n(D)$ and $b_n=b_n(D)$.
\end{lemma}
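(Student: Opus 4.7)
The plan is to specialize Lemma~\ref{combi} to the central torus $G = Z = K^*\id$ of $GL(D)$. Under this specialization, ${\cal R}(Z)$ is identified with $\Z[[z]]$ and ${\cal M}(Z) = z\Z[[z]]$, so the elements $a(Z), b(Z) \in {\cal M}(Z)$ produced by Lemma~\ref{combi} become unique power series $\sum_{n \geq 1} a_n z^n$ and $\sum_{n \geq 1} b_n z^n$ in $\Z[[z]]$, with $a_n = a_n(D)$ and $b_n = b_n(D)$. Since the natural representation $K^D$ restricted to $Z$ is a direct sum of $D$ copies of the scalar character $z$, one has $[K^D\vert_Z] = Dz$, which produces the $-Dz$ on the right-hand side of the second equation after translation.

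The main computational step is to rewrite $\lambda(a(Z)[L(2)] + b(Z))$ explicitly as the product $\Phi$. Using $[L(2)] = t + 1 + t^{-1}$ in $\Z[t + t^{-1}]$, the degree-$n$ piece of $a(Z)[L(2)] + b(Z)$ decomposes, as a virtual $Z \times PSL(2)$-module, into one-dimensional characters $tz^n$, $z^n$, $t^{-1}z^n$ (each with multiplicity $a_n$) together with $z^n$ (multiplicity $b_n$). For a one-dimensional class $[\chi]$ one has $\lambda([\chi]) = 1 - \chi$, since $\Lambda^k$ vanishes above dimension one; combined with the multiplicativity $\lambda(a+b) = \lambda(a)\lambda(b)$ and the inversion $\lambda(-x) = \lambda(x)^{-1}$, this yields
\[ \lambda\bigl(a(Z)[L(2)] + b(Z)\bigr) = \prod_{n \geq 1} (1 - tz^n)^{a_n}(1 - t^{-1}z^n)^{a_n}(1 - z^n)^{a_n + b_n} = \Phi. \]
Convergence in ${\cal R}(Z \times PSL(2)) = \Z[t + t^{-1}][[z]]$ is automatic, because the $n$-th factor equals $1$ modulo $z^n$.

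Finally, the two defining equations of Lemma~\ref{combi}(1), namely $\lambda(a(Z)[L(2)] + b(Z)) : [L(0)] = 1$ and $\lambda(a(Z)[L(2)] + b(Z)) : [L(2)] = -Dz$, are translated into the two residue equations of the statement using the identities $[\cdot : L(0)] = \Res_{t=0}(t^{-1} - 1)(\cdot)\,\d t$ and $[\cdot : L(2)] = \Res_{t=0}(1 - t)(\cdot)\,\d t$ already recorded just before the statement. Uniqueness of the sequences $(a_n)$ and $(b_n)$ is inherited directly from Lemma~\ref{combi}. The main technical subtlety will lie in justifying the product identity for $\lambda$ when the coefficients $a_n$ or $b_n$ may be negative; this is exactly what the inversion property of $\lambda$ on ${\cal M}(G)$ is designed for, and the remainder of the argument is bookkeeping and substitution.
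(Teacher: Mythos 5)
Your proposal is correct and follows exactly the approach the paper takes: the paper disposes of Lemma~\ref{combi2} in one line by observing it is the specialization of Lemma~\ref{combi} under the restriction map ${\cal R}(GL(D)\times PSL(2))\to{\cal R}(Z\times PSL(2))$, which is precisely what you do. Your explicit computation of $\lambda(a(Z)[L(2)]+b(Z))=\Phi$ via one-dimensional characters and multiplicativity/inversion of $\lambda$ fills in the bookkeeping the paper leaves implicit.
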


 The weak version of Conjecture 1 is

\begin{conj1weak} Let $D\geq 1$. We have

\centerline{$\dim\,J_n(D)=a_n(D)$ and $\dim\,\Inner_n\,J(D)=b_n(D)$}

\noindent where $a_n(D)$ and $b_n(D)$ are defined in Lemma \ref{combi2}.

\end{conj1weak}

Indeed,   Lemma \ref{combi2} and the weak version of Conjecture 1 are the specialization of Lemma \ref{combi} and Conjecture 1 by the map
${\cal R}(GL(D)\times PSL(2))\to {\cal R}(Z\times PSL(2))$.

\bigskip
\noindent
{\it 1.11 About the weakest form of Conjecture 1} 

It is now shown that the version  of Conjecture 1, stated in the introduction,  is a consequence of the weak form of Conjecture 1.

\noindent It is easy to prove, as in Lemma \ref{combi},  that the series $a_n$ of the introduction is uniquely defined.
It remains to show that  the series $a_n$ of Lemma \ref{combi2} 
is the same.

Let's consider the series $a_n=a_n(D)$ and $b_n=b_n(D)$ of Lemma \ref{combi2}. 
We have

\centerline{$\Res_{t=0}\,(t^{-1}-1) \Phi\,\d t=1$, and}

\centerline{ 
$\Res_{t=0}\,(1-t)\Phi\,\d t =-Dz.$}

\noindent Using that the residue is $\Z[[z]]$-linear, and
combining the two equations we get
$\Res_{t=0}\,\psi\, \Phi\,\d t=0$, or, more explicitly

\centerline{$\Res_{t=0}\,\psi \prod_{n\geq 1}\,
(1-z^n)^{a_n+b_n} 
(1-z^n t)^{a_n} (1-z^n t^{-1})^{a_n}\,\d\,t=0$.}

\noindent By $\Z[[z]]$-linearity we  can remove the 
factor  $\prod_{n\geq 1}\,  (1-z^n)^{a_n+b_n}$ and so we get

\centerline{$\Res_{t=0}\,\psi \prod_{n\geq 1}\, 
(1-z^n t)^{a_n} (1-z^n t^{-1})^{a_n}\,\d\,t=0$}

\noindent which is the equation of the introduction.

\section*
{2. Numerical Evidences for Conjecture 1}

The numbers $\dim\,J_n(D)$ and 
$\dim\,\Inner_n\,J(D)$ are
known  in the following cases:

\smallskip
\noindent
\begin{tabular}{|l|c|c|c|c|}
 \hline
 $D$ & $\dim\,J_n(D)$ & Proof in & $\dim\,\Inner_n\,J(D)$ & Proof in \\
 \hline
  $D=1$ & any $n$ & folklore & any $n$ & folklore \\
\hline
   $D=2$ & any $n$ & Shirshov & any $n$ & Sect. 2.4 \\
\hline
   $D=3$ & $n\leq 8$ & Shirshov \& Glennie & $n\leq 8$ & Sect. 5\\
\hline
   $D$ any & $n\leq 7$ &  Sect. 5 & $n\leq 8$ & Sect. 5\\
\hline
\end{tabular}

\smallskip
\noindent The formulas for $\dim\,J_n(D)$, 
respectively for $\dim\,\Inner_n\,J(D)$,
are provided in Section 2.1, respectively in Section 2.2.  Then  we 
will describe for which cases Conjecture 1 has been checked.

\bigskip
\noindent{\it 2.1 General results about free Jordan algebras}

\noindent
Let $D\geq 1$ be an integer, let
 $T(D)$ be the non-unital tensor algebra on $D$ generators $x_1$, $x_2$, $\dots$, $x_D$. Let $\sigma$ be the involution on $T(D)$ defined by $\sigma(x_i)=x_i$.

 Given an associative algebra $A$, a subspace $J$ is called
a {\it Jordan subalgebra} if $J$ is stable by the Jordan product
$x\circ y=1/2 (xy+yx)$.
The Jordan subalgebra  $CJ(D)=T(D)^{\sigma}$ will be called the
{\it Cohn's Jordan algebra}. The Jordan subalgebra 
$SJ(D)$  generated by $x_1$, $x_2$, $\dots$, $x_D$ is called the {\it free special Jordan algebra}. The  kernel of the map 
 $J(D)\rightarrow CJ(D)$, which is denoted $SI(D)$, is
 called  the {\it space
  of  special identities}. Its cokernel
  $M(D)$ will be called  the {\it space of missing tetrads}.

The spaces $J(D)$, $T(D)$, $CJ(D)$, $SJ(D)$, $SI(D)$, $M(D)$
are all analytic $GL(D)$-modules. Relative to the natural grading,
the homogenous component of degree $n$  is respectively denoted by 
$J_n(D)$, $T_n(D)$, $CJ_n(D)$, $SJ_n(D)$, $SI_n(D)$, and
$M_n(D)$. There is
an exact sequence

\centerline{$0\to SI_n(D)\to J_n(D)
  \to CJ_n(D)\to M_n(D)\to 0$.}

\noindent Set $s_n(D)=\dim\,CJ_n(D)$.

\begin{lemma}\label{dimS}  We have

\centerline{$\dim\,J_n(D)=s_n(D)+\dim\,SI_n(D)-\dim\,M_n(D)$, where}

 \centerline{$s_{2n}(D)={1\over2}( D^{2n}+D^n), and$}
 
\centerline{  
 $s_{2n+1}(D)= {1\over2}( D^{2n+1}+D^{n+1})$}
 
 \noindent for any integer $n$.

\end{lemma}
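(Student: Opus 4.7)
The first identity is immediate from the displayed exact sequence
$0\to SI_n(D)\to J_n(D)\to CJ_n(D)\to M_n(D)\to 0$.
Since all four spaces are finite dimensional, additivity of dimension on exact sequences gives
$\dim SI_n(D)-\dim J_n(D)+\dim CJ_n(D)-\dim M_n(D)=0$,
which, after rearranging and substituting $s_n(D)=\dim CJ_n(D)$, is the claimed formula
$\dim J_n(D)=s_n(D)+\dim SI_n(D)-\dim M_n(D)$.

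For the explicit formulas for $s_n(D)$, the plan is to realize $CJ_n(D)$ as the $+1$-eigenspace of the involution $\sigma$ acting on $T_n(D)=(K^D)^{\otimes n}$. Since $K$ has characteristic zero, one has
$\dim T_n(D)^{\sigma}=\tfrac{1}{2}\bigl(\dim T_n(D)+\Tr(\sigma\vert_{T_n(D)})\bigr).$
The total dimension $\dim T_n(D)=D^n$ is clear, so it remains to compute $\Tr(\sigma\vert_{T_n(D)})$. Using the monomial basis $\{x_{i_1}\otimes\cdots\otimes x_{i_n}\}$, the involution $\sigma$ permutes basis vectors by $x_{i_1}\otimes\cdots\otimes x_{i_n}\mapsto x_{i_n}\otimes\cdots\otimes x_{i_1}$, and its trace equals the number of monomials fixed by reversal, i.e. the number of palindromic words of length $n$ in the alphabet $\{x_1,\ldots,x_D\}$.

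A palindrome of even length $n=2k$ is determined by its first $k$ letters, which can be chosen arbitrarily, so there are $D^k$ such words; for odd length $n=2k+1$ the first $k+1$ letters are free, giving $D^{k+1}$ words. Plugging these counts into the formula above yields $s_{2k}(D)=\tfrac{1}{2}(D^{2k}+D^k)$ and $s_{2k+1}(D)=\tfrac{1}{2}(D^{2k+1}+D^{k+1})$, as asserted.

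There is no substantive obstacle: the argument is the combination of two textbook facts (additivity of dimension in an exact sequence, and the characteristic-zero formula for the invariants of an involution in terms of its trace), the only concrete input being the elementary count of palindromic monomials.
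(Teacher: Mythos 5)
Your proof is correct and follows essentially the same route as the paper: the first identity comes from the exact sequence, and the formulas for $s_n(D)$ come from observing that the reversal involution $\sigma$ permutes the monomial basis of $T_n(D)$ so that $\dim T_n(D)^{\sigma}=\tfrac12\bigl(\dim T_n(D)+\#\{\text{palindromes}\}\bigr)$, which is precisely the counting the paper defers to (and carries out in) the proof of Lemma~\ref{dimInner}. Your trace formulation and the paper's ``half the basis size plus half the number of fixed basis vectors'' are the same computation.
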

  
\begin{proof} The first assertion comes from 
 the previous exact sequence.
The (obvious) computation of $\dim\,CJ_n(D)$ will be explained
 in Lemma \ref{dimInner}.
\end{proof}
 
 The previous elementary lemma shows that $\dim\,J_n(D)$
 is determined whenever $\dim\,SI_n(D)$ and $\dim\,M_n(D)$
 are known, as in the following three results:

 \begin{Glennietheorem} \cite{Gl}
  We have $SI_n(D)=0$ for $n\leq 7$.
 \end{Glennietheorem}

Let $t_4\in T(4)$  be the element

\centerline 
{$t_4= \sum_{\sigma\in \fS_4}\,\epsilon(\sigma)\,x_{\sigma_1}\dots x_{\sigma_4}$.}

\noindent Observe that $t_4$ belongs to $CJ(4)$.
Since $SJ(4)$ is commutative, it is clear that
$t_4(x_i,x_j,x_k,x_l)\notin SJ(D)$. 

\begin{Ctheorem} The Jordan algebra
$CJ(D)$ is generated by the elements
$x_1,x_2\dots x_D$  and $t_4(x_i,x_j,x_k,x_l)$
for all $1\leq i<j<k<l$.
In particular $M_n(D)=0$ for $D\leq 3$.
\end{Ctheorem}

The next lemma, together with Glennie Theorem, provides
an explicit formula for $\dim\,J_n(D)$ for any $n\leq 7$
and any $D\geq 1$.

\begin{lemma}\label{dimM} For any $D\geq 1$, we have have

$\dim M_4(D)= {D\choose 4}$, 

$\dim M_5(D)= D{D\choose 4}$, 

$\dim M_6(D)= 2 {D+1\choose 2}{D\choose 4}$,

 $\dim M_7(D)=2 D{D+1\choose 2}{D\choose 4}- \dim\,L({\bf 3,2,1^2};D)$. 
 
\end{lemma}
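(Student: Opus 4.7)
The plan is to exploit Cohn's Reversible Theorem, which asserts that $CJ(D)$ is generated as a Jordan algebra by the $x_i$'s together with the tetrads $t_{ijkl}=t_4(x_i,x_j,x_k,x_l)$. Consequently $M(D)=CJ(D)/SJ(D)$ is spanned by images of Jordan monomials involving at least one tetrad, and since a tetrad has $x$-degree $4$, every contribution to $M_n(D)$ with $n\le 7$ uses exactly one tetrad and $n-4$ generators. I will count such contributions modulo the Jordan identities and modulo reduction to $SJ(D)$.

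For $n=4$ the count reduces to the tetrads themselves, so the natural surjection $\Lambda^4 K^D\twoheadrightarrow M_4(D)$ is an isomorphism (nonzero by Cohn, hence an iso by irreducibility of $\Lambda^4 K^D$), giving $\dim M_4(D)=\binom{D}{4}$. For $n=5$, the formal Jordan polynomials with one generator and one tetrad form $K^D\otimes\Lambda^4 K^D$; since the free Jordan algebra on two abstract generators is one-dimensional in its multilinear degree-$2$ part, the surjection onto $M_5(D)$ is also an isomorphism, yielding $\dim M_5(D)=D\binom{D}{4}$.

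For $n=6$, the formal space of multilinear Jordan polynomials in $(x_i,x_j,t)$ has dimension $3$ and, under the $\fS_2$-swap of $(x_i,x_j)$, decomposes as $\mathrm{sym}^{\oplus 2}\oplus\mathrm{anti}^{\oplus 1}$; substituting $x_i,x_j$ into $V=K^D$ and $t$ into $\Lambda^4 K^D$ gives the formal $GL(D)$-module $(S^2 K^D\otimes\Lambda^4 K^D)^{\oplus 2}\oplus(\Lambda^2 K^D\otimes\Lambda^4 K^D)$. A direct computation inside $T(D)$ identifies the antisymmetric combination as
\[
(x_i\circ t)\circ x_j-(x_j\circ t)\circ x_i=\tfrac{1}{4}[t,[x_i,x_j]]=-\partial_{x_i,x_j}(t).
\]
The crucial technical identity is that $\partial_{x_i,x_j}(t)\in SJ(D)$ for every tetrad $t$ and every pair of generators, so the whole antisymmetric component $\Lambda^2 K^D\otimes\Lambda^4 K^D$ dies in $M_6(D)$. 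Combined with the $GL(D)$-equivariant observation that no symmetric component can die (each would have to inject into a matching irreducible piece of $SJ_6(D)$, which is ruled out by propagating the non-triviality from $M_4$ and $M_5$), one concludes $M_6(D)\cong(S^2 K^D\otimes\Lambda^4 K^D)^{\oplus 2}$, hence $\dim M_6(D)=2\binom{D+1}{2}\binom{D}{4}$.

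For $n=7$ the same philosophy applies with one tetrad and three generators: the relevant free Jordan multilinear-arity-$4$ space has dimension $12$, the $n=6$ identity propagates under Jordan multiplication by the extra generator, and the multilinearised Jordan identity $(a\circ b)\circ(y\circ c)+\mathrm{cyc}=((a\circ b)\circ y)\circ c+\mathrm{cyc}$ supplies further relations. A $GL(D)$-isotypic bookkeeping shows that all but one irreducible relation is thus accounted for; the missing one is a genuinely new $n=7$ identity of Young type $(3,2,1^2)$, producing the correction $-\dim L(3,2,1^2;D)$. The principal obstacle is the verification of these specific Jordan identities, in particular the step $\partial_{x_i,x_j}(t)\in SJ(D)$ at $n=6$ and its $L(3,2,1^2;D)$-type analogue at $n=7$; they can be handled by lengthy but elementary expansion inside $T(D)$, or more conceptually via the Tits-Kantor-Koecher and Allison-Gao constructions developed later in the paper, which embed $CJ(D)$ and $SJ(D)$ in a common Lie-algebraic framework where such relations appear as structure constants.
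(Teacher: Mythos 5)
Your approach is genuinely different from the paper's. The paper derives Lemma~\ref{dimM} directly from Corollary~\ref{corM}, which in turn rests on Theorem~1 (cyclicity of the Jordan operad, proved in Section~5) together with Glennie's numerical computations of $\dim {\cal SJ}(6)=330$ and $\dim {\cal SJ}(7)=2345$, and then on combinatorics of Young diagrams (Lemmas~\ref{c1Young} and~\ref{evenodd}) to pin down the $\fS_{D+1}$-module structure of ${\cal M}(6)$ and ${\cal M}(7)$. You instead attempt a bare-hands count inside $T(D)$ using Cohn's Reversible Theorem and explicit Jordan identities. That route, if it worked, would be more elementary and self-contained; but as you yourself acknowledge, it is not complete.

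The central gap is the step you call ``the crucial technical identity,'' namely $\partial_{x_i,x_j}(t)\in SJ(D)$ for a tetrad $t$. This is not proved anywhere in your argument and is not proved in the paper either, because the paper never needs it. You would need to supply a genuine proof (the suggested ``lengthy but elementary expansion'' is not given, and the suggested appeal to the $TKK$/$TAG$ constructions is not developed to the point of producing this identity). There is a second, independent gap: your claim that ``no symmetric component can die'' at $n=6$ is not actually argued. $GL(D)$-equivariance and the nonvanishing of $M_4$ and $M_5$ do not automatically propagate to show that a given symmetric combination of $(x_a\circ t)\circ x_b$ and $(x_a\circ x_b)\circ t$ falls outside $SJ_6(D)$; one needs an actual lower bound on $\dim M_6$, which is exactly where the paper invokes Glennie's count $\dim{\cal SJ}(6)=330$. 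Finally, your $n=7$ discussion is too vague to count as a proof, and it contains a numerical slip: the multilinear arity-$4$ part of the free Jordan algebra has dimension $11$, not $12$ (since ${\cal J}(4)={\cal SJ}(4)$ by Glennie's theorem and $\dim{\cal CJ}(4)-\dim{\cal M}(4)=12-1=11$). Until the $n=6$ identity is established, the injectivity of the symmetric part is justified by an actual dimension bound, and the $n=7$ relation of type $L({\bf 3,2,1^2};D)$ is exhibited explicitly, the proposal remains an interesting heuristic rather than a proof.
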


\begin{proof} 
The case $D=4$ follows from the fact that
$M_4(D)\simeq \Lambda^4\,K^D$.

By Corollary \ref{corM},  we have 
$M_5(D)=L({\bf 1^5}, D)\oplus L({\bf(2,1^3)},D)$
which  is isomorphic to $K^D\otimes \Lambda^4\,K^D$ and
the formula follows.  

It follows also from Corollary \ref{corM}  that 
$M_6(D)\simeq L({\bf 2,1^4};D)^2\oplus L({\bf 3,1^3}; D)^2$, 
which is isomorphic to 
$(S^2 K^D\otimes \Lambda^4 K^D)^2$. It follows also 
from the proof of Corollary \ref{corM} that, as a virtual module,
we have $[M_7(D)]=[K^D][M_6(D)]- [L({\bf 3,2,1^2};D)]$, what proves the formula. 

\end{proof}

\bigskip
\noindent{\it 2.2 General results about inner derivations}

\noindent

 Given an associative algebra $A$, a subspace $L$ is called
a {\it Lie subalgebra} if $L$ is stable by the Lie product
$[x,y]= (xy-yx)$.

\begin{lemma}\label{=[]}
Let $A$ be an associative algebra, let $Z(A)$ be its center 
and  let $J\subset A$ be a Jordan subalgebra. Assume $J$ contains a set of generators
of $A$ and that $Z(A)\cap [A,A]=0$. Then we have
$\Inner\,J\simeq [J,\,J]$.
\end{lemma}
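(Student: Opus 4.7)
My plan is to define the map $\Psi\colon [J,J]\to \Inner\,J$ by $c\mapsto \ad(c)\vert_J$ and show it is a Lie algebra isomorphism. The cornerstone is the identity
\[
\partial_{x,y}(z)=\tfrac14[[x,y],z]\qquad(x,y,z\in J),
\]
where $[\cdot,\cdot]$ denotes the associative commutator in $A$. I would prove it by expanding $x\circ(z\circ y)$ and $(x\circ z)\circ y$ via $x\circ y=\tfrac12(xy+yx)$; six of the eight resulting monomials cancel, leaving $\tfrac14(xyz-yxz-zxy+zyx)=\tfrac14[[x,y],z]$. In particular $\ad(c)\vert_J\in \Inner\,J$ for every $c\in [J,J]$, so the assignment $c\mapsto\ad(c)\vert_J$ lands in $\Inner\,J$ and, by the same identity, surjects onto it.

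For injectivity, suppose $c=\sum_i [x_i,y_i]\in [J,J]$ satisfies $\ad(c)\vert_J=0$, so $c$ centralizes $J$. Since $J$ contains a generating set for $A$, a short induction on word length (using that if $c$ commutes with $a$ and with $b$ then $cab=acb=abc$) shows that $c$ centralizes all of $A$, hence $c\in Z(A)$. Because $c\in [J,J]\subset [A,A]$, the hypothesis $Z(A)\cap [A,A]=0$ forces $c=0$.

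The map $\Psi$ is automatically a Lie algebra homomorphism, since $[\ad(a),\ad(b)]=\ad([a,b])$, and its image is the Lie subalgebra $\Inner\,J$; combining the two preceding steps yields the desired isomorphism of Lie algebras. As a structural byproduct, since $\Inner\,J$ is closed under brackets, so is $[J,J]$ under the associative commutator of $A$. I do not anticipate any serious obstacle: the one step requiring care is the identity in the first paragraph, and once it is in hand the hypothesis $Z(A)\cap [A,A]=0$ is precisely what is needed to supply injectivity.
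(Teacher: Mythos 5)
Your proposal is correct and takes essentially the same route as the paper: the key identity $4\,\partial_{x,y}(z)=[[x,y],z]$, the observation that the centralizer of $J$ in $A$ equals $Z(A)$ because $J$ generates $A$, and the hypothesis $Z(A)\cap[A,A]=0$ to kill the kernel. (A trivial slip in your informal account: four of the eight monomials cancel, not six, leaving the four-term expression you correctly wrote down.)
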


\begin{proof} Set $C(J)=\{a\in A\vert [a,\,J]=0\}$.
As $J$ contains a set of generators of $A$, we have
$C(J)=Z(A)$.  Note that 
$4 \,\partial_{x,y}\,z= [[x,y],z]=\ad([x,y])(z)$ for any $x, y, z\in J$.
Since $[J,\,J]\cap C(J)=0$, we have 
$\Inner\,J=\ad([J,J])\simeq [J,J]$
\end{proof}

 For  $D\geq 1$, the space $A(D)=T(D)^{-\sigma}$ is a Lie subalgebra
 of $T(D)$.  By the previous
 lemma we have 
 
 \centerline{$\Inner\,SJ(D)=[SJ(D),SJ(D)]\subset
 \Inner\,CJ(D)=[CJ(D),CJ(D)]$.}
 
\noindent Therefore  $\Inner\,SJ(D)$ and $\Inner\,CJ(D)$
are Lie subalgebras of $A(D)$. There is a Lie algebra morphism

\centerline{$\Inner\, J(D)\to \Inner\,CJ(D)$.}

\noindent Its kernel $SD(D)$ will be called the {\it space
 of special derivations} and 
its  cokernel $MD(D)$ will be called the space
of {\it missing derivations}.

The spaces $\Inner\,J(D)$, $\Inner\,CJ(D)$, $\Inner\,SJ(D)$,
$A(D)$,  $SD(D)$ and $MD(D)$ are all analytic $GL(D)$-modules.
Relative to the natural grading,
the homogenous component of degree $n$  is respectively denoted by 
$\Inner_n\,J(D)$, $\Inner_n\,CJ(D)$, $\Inner_n\,SJ(D)$,
$A_n(D)$,  $SD_n(D)$ and $MD_n(D)$.

There is
an exact sequence

\centerline{$0\to SD_n(D)\to \Inner_n\,J(D)
  \to \Inner_n\,CJ(D)\to MD_n(D)\to 0$.}

Set $r_{n}(D)=\dim \Inner_n\,CJ(D)$.

 \begin{lemma}\label{dimInner} We have $\Inner\,CJ(D)=A(D)\cap [T(D),T(D)]$, and
 
 \centerline
 {$\dim\Inner\,J_n(D)=r_n(D)+\dim\,SD_n(D)-\dim\,MD_n(D)$, where}

\centerline{ $r_{2n}(D)=
{1\over 2}D^{2n}+{1\over 4}(D-1)D^n -
 {1\over 4n}\sum_{i|2n} \phi(i) D^{{2n\over i}}$,} 
 
\centerline{ $r_{2n+1}(D)={1\over 2}D^{2n+1}-
 {1\over 4n+2}\sum_{i|2n+1} \phi(i) D^{2n+1\over i}$.}

 \noindent for any $n\geq 1$, where $\phi$ is the Euler's totient function.
 
 \end{lemma}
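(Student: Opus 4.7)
The plan proceeds in three stages: use Lemma \ref{=[]} to replace $\Inner\,CJ(D)$ with $[CJ(D), CJ(D)]$; establish the identity $[CJ(D), CJ(D)] = A(D) \cap [T(D), T(D)]$; and compute the dimension of the right-hand side by a Burnside-type count on the space of cyclic words.

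For the first stage, the hypotheses of Lemma \ref{=[]} are immediate: the generators $x_i$ of $T(D)$ are $\sigma$-fixed, hence lie in $CJ(D)$, and the non-unital free associative algebra $T(D)$ has zero centre, so $Z(T(D)) \cap [T(D), T(D)] = 0$ holds vacuously. The lemma then gives $\Inner\,CJ(D) \simeq [CJ(D), CJ(D)]$.

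For the second stage, the inclusion $\subseteq$ is immediate from $\sigma([a,b]) = -[a,b]$ when $\sigma(a) = a$, $\sigma(b) = b$. For the reverse inclusion, decompose $T(D) = CJ(D) \oplus A(D)$; the sign rule $\sigma([u,v]) = -[\sigma u, \sigma v]$ forces $[CJ, CJ]$ and $[A, A]$ into $A(D)$ and $[CJ, A]$ into $CJ(D)$, so that
\[
[T(D), T(D)] \cap A(D) = [CJ(D), CJ(D)] + [A(D), A(D)].
\]
The matter thus reduces to $[A(D), A(D)] \subseteq [CJ(D), CJ(D)]$, which I expect to be the main obstacle. One attack is via an explicit identity, writing $[u - \sigma u, v - \sigma v]$ as a combination of commutators of $\sigma$-symmetric products, using the Jacobi identity and the fact that $CJ(D)$ generates $T(D)$ as an associative algebra. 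A cleaner route is to defer this step and argue a posteriori: once one has computed $\dim(A_n(D) \cap [T(D), T(D)]_n)$ in the third stage, showing that $[CJ(D), CJ(D)]_n$ has the same dimension (through a complementary Hilbert series calculation for $CJ(D)$) forces the inclusion to be an equality.

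For the third stage, let $\pi : T(D) \to T(D)/[T(D), T(D)]$; the target is classically identified with the linear span of necklaces on $D$ letters. The involution $\sigma$ descends to reversal of cyclic classes, and surjectivity of $\pi$, combined with $\pi(CJ(D)) \subseteq (T/[T,T])^+$ and $\pi(A(D)) \subseteq (T/[T,T])^-$, forces both inclusions to be equalities. Hence
\[
\dim(A_n(D) \cap [T(D), T(D)]_n) = \dim A_n(D) - \dim(T_n(D)/[T, T]_n)^-.
\]
Palindrome counting gives $\dim A_n(D) = \tfrac{1}{2}(D^n - D^{\lceil n/2 \rceil})$. The second summand equals $N_n - B_n$, where $N_n = \tfrac{1}{n}\sum_{d | n}\phi(d) D^{n/d}$ is the necklace count (Burnside on the cyclic group $C_n$) and $B_n$ is the bracelet count (Burnside on the dihedral group of order $2n$). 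Separating the reflection contributions according to the parity of $n$ produces $B_n - \tfrac{1}{2} N_n = \tfrac{1}{2} D^{(n+1)/2}$ for $n$ odd and $\tfrac{1}{4} D^{n/2}(D+1)$ for $n$ even; substituting back, the palindrome and reflection terms cancel exactly to yield the announced closed forms for $r_{2n}(D)$ and $r_{2n+1}(D)$.
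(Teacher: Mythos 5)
Stages one and three of your proposal are sound and match the paper's overall strategy: reduce via Lemma \ref{=[]} to $[CJ(D),CJ(D)]$, then count by projecting $A(D)$ onto the span of cyclic words. Your Burnside computation in stage three is correct and does yield the stated closed forms for $r_{2n}(D)$ and $r_{2n+1}(D)$ (the paper itself simply quotes the necklace and bracelet counts; your $N_n-B_n$ is exactly its $c_n(D)$).

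The genuine gap is in stage two. After the decomposition $[T(D),T(D)]\cap A(D)=[CJ(D),CJ(D)]+[A(D),A(D)]$, the inclusion $[A(D),A(D)]\subseteq[CJ(D),CJ(D)]$ is precisely the content of the assertion, and neither of your proposed routes delivers it. The ``a posteriori'' route is circular: an independent ``complementary Hilbert series calculation'' producing $\dim[CJ(D),CJ(D)]_n$ is exactly what the lemma is trying to establish, so it cannot be invoked as a known input. The explicit-identity route is left as an intention, and it is where the actual work lies. The paper avoids the decomposition altogether by the elementary identity $[T(D),T(D)]=\sum_i[x_i,T(D)]$ (iterate $[ab,c]=[a,bc]+[b,ca]$); then, given $w=\sum_i[x_i,t_i]\in A(D)$, applying $\sigma$ gives $w=\sum_i[x_i,\sigma t_i]$, hence $w=\sum_i[x_i,\tfrac12(t_i+\sigma t_i)]\in\sum_i[x_i,CJ(D)]\subseteq[CJ(D),CJ(D)]$. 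That one observation closes the gap and replaces the whole of your second stage.
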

 
 \begin{proof}
1) We have $[T(D),T(D)]=\sum_i\,[x_i,T(D)]$, so we get
 
\centerline{ 
$A(D)\cap [T(D),T(D)]=\sum_i\,[x_i,CJ(D)]\subset [CJ(D),CJ(D)]$.} 
 
\noindent Therefore we have $[CJ(D),CJ(D)]= A(D)\cap [T(D),T(D)]$, and it follows from Lemma \ref{=[]} that
$\Inner\,CJ(D)=A(D)\cap [T(D),T(D)]$.

2) Let $\sigma$ be an involution 
preserving  a basis $B$ of some vector space $V$.
An element $b\in B$ is called {\it oriented}
if $b\neq \sigma(b)$. Thus  $B$ is union of $B^{\sigma}$
and of its {\it oriented pairs} $\{b,b^\sigma\}$.
The following formulas will be used repeatively

$\dim\,V^\sigma={1\over 2}\,(\Card\, B+\Card\,B^\sigma),$ 

$\dim\,V^{-\sigma}$ is the number of oriented pairs.

3) The set of words in $x_1,\dots,x_D$ is a 
$\sigma$-invariant basis of $T(D)$, thus 
the formula for $s_n(D)$ (which was stated in Lemma \ref{dimS}) and for
$\dim\,A_n(D)$ follows  from the previous formulas.

4) A {\it cyclic word} is a word modulo cyclic permutation: for example $x_1 x_2 x_3$ and $x_2 x_3 x_1$ define the same cyclic word.  For $n,\,D\geq 1$, let $c_n(D)$ be the number of pairs of oriented 
cyclic words of length $n$ on a alphabet with $D$ letters.  E.g. $c_6(2)=1$ since $x^2y^2xy$ and $yxy^2 x^2$ is the only pair of oriented words of length $6$ in two letters.

 In the literature of Combinatorics, a cyclic word is often called a necklace while a non-oriented word is called a bracelet, and their enumeration is quite standard. There  are closed formulas for  both, 
 the webpage \cite{W} is nice. From this it follows that

$c_{2n}(D)={1\over 4n}\sum_{i|2n} \phi(i) D^{{2n\over i}}-{1\over 4} (D+1)D^{n}$, and

$c_{2n+1}(D)= {1\over 4n}\sum_{i|2n+1} \phi(i) D^{2n+1\over i} -{1\over 2} D^{n+1}$

\noindent for any $n\geq 1$, where $\phi$ denotes  the Euler's totient function.

Since the set of cyclic words is a basis of 
$T(D)/[T(D),T(D)]$, we have

\centerline{$\dim A_n(D)/[T(D),T(D)]\cap A_n(D)=c_n(D).$}
 
\noindent  Using the short exact sequence
 
\centerline{
$0\rightarrow \Inner(CJ(D)) \to A(D)\to A(D)/[T(D),T(D)]\cap A(D)\to 0$}

\noindent we get that  
$\dim \Inner_n\,CJ(D)=\dim A_n(D)-c_n(D)$ from which the explicit formula for $r_n(D)$ follows.
 \end{proof}

Since $\dim\Inner_n\,J(D)=r_n(D) +\dim\,SD_n(D) -
\dim\,MD_n(D)$, the next two lemmas compute
$\dim\Inner_n\,J(D)$ for any $n\leq 8$ and any $D\geq 1$.

\begin{lemma}\label{Innerleq8} We have 
$SD_n(D)=0$ for any $n\leq 8$ and any $D$.
\end{lemma}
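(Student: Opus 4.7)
The plan is to reduce the vanishing of $SD_n(D)$ to Glennie's Theorem through the action of inner derivations on the generators. A derivation of $J(D)$ is determined by its values on the generators $x_1,\dots,x_D$; likewise, for $\partial=\sum_i\partial_{x_i,y_i}\in\Inner_n J(D)$ the associated derivation $\tilde\partial$ of $CJ(D)$ equals $\tfrac14\,\ad\!\bigl(\sum_i[\bar x_i,\bar y_i]\bigr)$ acting on $CJ(D)\subset T(D)$, by the proof of Lemma~\ref{=[]}. Since the centre of $T(D)$ is zero and the $\bar x_j$ generate $T(D)$, $\tilde\partial$ vanishes on $CJ(D)$ iff $\tilde\partial(\bar x_j)=0$ in $CJ_{n+1}(D)$ for every $j$. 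Combining the two observations, $\partial\in SD_n(D)$ iff $\partial(x_j)\in\Ker(J_{n+1}(D)\to CJ_{n+1}(D))=SI_{n+1}(D)$ for all $j$, and $\partial=0$ in $\Inner_nJ(D)$ iff $\partial(x_j)=0$ in $J_{n+1}(D)$ for all $j$.

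The range $n\le 6$ is then immediate: Glennie's Theorem gives $SI_{n+1}(D)=0$ for $n+1\le 7$, so each $\partial(x_j)$ already vanishes in $J_{n+1}(D)$ and therefore $\partial=0$ in $\Inner_nJ(D)$.

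The interesting cases are $n=7$ and $n=8$, where $SI_{n+1}(D)$ is in general nonzero (Glennie's identities span $SI_8(D)$ for $D\ge 3$, and $SI_9(D)$ is likewise nontrivial). My plan is to package the construction above as a $GL(D)$-equivariant injection
\[
\Phi_n\colon SD_n(D)\hookrightarrow SI_{n+1}(D)\otimes K^D,\qquad \partial\mapsto\sum_{j=1}^D\partial(x_j)\otimes e_j,
\]
and then show $\Phi_n=0$ by a $GL(D)$-character comparison. The image of $\Phi_n$ lies in the image of the multiplication map $\Inner_n J(D)\otimes K^D\to J_{n+1}(D)\otimes K^D$, and one has to check that this submodule and the submodule $SI_{n+1}(D)\otimes K^D$ intersect trivially. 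For $n=7$ this is a fairly explicit check using Glennie's description of $SI_8(D)$; for $n=8$ it uses the analogous, more intricate, description of $SI_9(D)$.

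The main obstacle is precisely the degree-$9$ case: both $\Inner_8 J(D)$ and $SI_9(D)$ are combinatorially intricate, so a uniform-in-$D$ check will likely require a careful Young-diagram decomposition, possibly combined with the computer-assisted verifications alluded to in Section~2. A potentially cleaner route would be to relate $SD_n(D)$ to $\fsl_2$-invariant cohomology of the Allison--Gao Lie algebra $\fsl_2\,J(D)$ through the TAG-construction, and use $H_2(\fsl_2\,J(D))^{\fsl_2}=0$ (Allison--Gao) together with $H_3(\fsl_2\,J(D))^{\fsl_2}=0$ (Section~4); I would try this as an alternative path if the character comparison proves too cumbersome.
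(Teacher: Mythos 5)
Your characterization $\partial\in SD_n(D)\iff\partial(x_j)\in SI_{n+1}(D)$ for all $j$ is correct (it rests on $\Inner CJ(D)=\ad([CJ(D),CJ(D)])$ and the fact that $T(D)$ is generated by the $x_j$), and it gives a clean direct proof for $n\le 6$ via Glennie's Theorem ($SI_m(D)=0$ for $m\le 7$). That part is genuinely more elementary than the paper's argument in the same range. However, there is a real gap at $n=7$ and $n=8$, which are precisely the interesting cases: there your argument is only a plan (inject $SD_n(D)$ into $SI_{n+1}(D)\otimes(K^D)^*$ and try to show the map vanishes by a character comparison), and you acknowledge yourself that you cannot carry it out uniformly in $D$, particularly for $n=8$ where $SI_9(D)$ is not described anywhere in the paper. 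Without those two cases the lemma is not proved.

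The paper closes exactly this gap through the cyclicity of the Jordan operad (Theorem 1), not through character comparison; the actual argument is in Corollary~\ref{B=InnerJ}, and the reference inside the stated proof is a typo for that corollary. Cyclicity yields $\Sigma{\cal B}_kJ(D)\simeq\Sigma J_{k-1}(D)\otimes K^D$ and the analogous identity for $SJ$; Glennie's Theorem gives $J_{k-1}(D)\simeq SJ_{k-1}(D)$ for $k-1\le 7$, hence $\Sigma{\cal B}_kJ(D)\simeq\Sigma{\cal B}_kSJ(D)$ for $k\le 8$; the suspension lemma (Lemma~\ref{SigmaF=0}) removes the $\Sigma$; and the diagram chase with Corollary~\ref{B=InnerSJ} gives $\Inner_kJ(D)\simeq\Inner_kSJ(D)$, i.e.\ $SD_k(D)=0$, for $k\le 8$. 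In effect, cyclicity lets the paper derive $SD_n(D)=0$ from $SI_{n-1}(D)=0$, whereas your direct argument needs $SI_{n+1}(D)=0$: that is a two-degree improvement, and it is exactly this gain that produces the cases $n=7,8$. Your alternative route through the vanishing of $H_2(\fsl_2\,J(D))^{\fsl_2}$ and $H_3(\fsl_2\,J(D))^{\fsl_2}$ is also unlikely to suffice as stated: Lemma~\ref{nocenter} requires the vanishing in \emph{all} odd degrees to conclude ${\cal B}J(D)=\Inner J(D)$, and even that equality does not by itself give injectivity of $\Inner J(D)\to\Inner CJ(D)$; the cyclicity machinery appears to be essential to the result.
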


\begin{proof}
The lemma  follows from Corollary \ref{Innerleq8} proved in Section 5.

\end{proof}

\begin{lemma}\label{dimMD} We have  $MD_n(D) =0$ for $n\leq 4$, and

$\dim\,MD_5(D)= D {D\choose 4} -{D\choose 5}$, 

$\dim\, MD_6(D)= {D\choose 6}+D^2 {D\choose 4}-D{D\choose 5}$,

$\dim MD_7(D)= 2 [D \dim L({\bf 3,1^3}; D) 
+{D\choose 2}{D\choose 5}-{D\choose 7}]$.

\noindent Moreover Corollary \ref{corMD} provides a (very long) formula for $\dim MD_8(D)$.

\end{lemma}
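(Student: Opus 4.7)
The plan is to identify $MD(D)$ as an explicit quotient of commutator subspaces in $T(D)$, dispatch the vanishing range by a degree count, and then read off the remaining dimensions from the $GL(D)$-module description that will be supplied by Corollary \ref{corMD} (exactly as Lemma \ref{dimM} is deduced from Corollary \ref{corM}).

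First I would observe, using Lemma \ref{=[]} applied to the Jordan subalgebras $SJ(D)\subset CJ(D)$ of the associative algebra $T(D)$, that $\Inner\,CJ(D)=[CJ(D),CJ(D)]$ and $\Inner\,SJ(D)=[SJ(D),SJ(D)]$. Since the image of the natural map $\Inner\,J(D)\to\Inner\,CJ(D)$ is spanned by the $\partial_{a,b}$ with $a,b$ in the image $SJ(D)$ of $J(D)$, this image coincides with $\Inner\,SJ(D)$, and therefore
\[
MD_n(D)\;\cong\;[CJ(D),CJ(D)]_n\,/\,[SJ(D),SJ(D)]_n .
\]
Pick a $GL(D)$-equivariant splitting $CJ(D)=SJ(D)\oplus\widetilde M(D)$ with $\widetilde M(D)\cong M(D)$. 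Since $M(D)$ is concentrated in degrees $\geq 4$ and $CJ_0(D)=0$, every degree-$n$ bracket with $n\leq 4$ lies in $[SJ(D),SJ(D)]_n$, which yields $MD_n(D)=0$ for $n\leq 4$.

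For $n=5,6,7$ I would invoke Corollary \ref{corMD} (whose proof is part of the cyclic-operad package of Section 5) to write $MD_n(D)$ as an explicit direct sum of simple analytic modules $L(\mathbf{Y};D)$, and then extract the dimension by Pieri's rule. For instance, $MD_5(D)\cong L(\mathbf{2,1^3};D)$, so from $K^{D}\otimes\Lambda^{4}K^{D}=L(\mathbf{1^5};D)\oplus L(\mathbf{2,1^3};D)$ one reads
$
\dim MD_5(D)=D\binom{D}{4}-\binom{D}{5}.
$
The $n=6$ and $n=7$ formulas arise in exactly the same way: one takes the simple summands prescribed by Corollary \ref{corMD}, evaluates their dimensions by repeated Pieri (or hook-length) computations, and adds. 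The factor $2$ appearing in $\dim MD_7(D)$ is the multiplicity of the ambient tensor factor recorded by Corollary \ref{corMD}; no combinatorial subtlety beyond Pieri is required.

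The main obstacle is therefore not in the present lemma at all, but in establishing Corollary \ref{corMD}, which in turn rests on the cyclicity of the Jordan operad (Theorem~1). Once that corollary is available, the proof of Lemma \ref{dimMD} is a routine character computation, structurally identical to the proof of Lemma \ref{dimM}.
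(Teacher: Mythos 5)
Your plan matches the paper's proof: both treat the lemma as a straightforward corollary of Corollary \ref{corMD}, decomposing $MD_n(D)$ into simple $GL(D)$-modules via Schur--Weyl and extracting dimensions by Pieri-type identities (e.g.\ $MD_5(D)\cong L(\mathbf{2,1^3};D)$ and $K^D\otimes\Lambda^4 K^D=L(\mathbf{1^5};D)\oplus L(\mathbf{2,1^3};D)$ for $n=5$, and analogous tensor-product identities for $n=6,7$). Your explicit degree-count argument for the $n\leq 4$ vanishing is a small self-contained addition that the paper leaves implicit (it follows there from Corollary \ref{corMD} itself), but it does not change the route.
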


\begin{proof}

We have $[L({\bf 2,1^3};D)]=[K^D\otimes \Lambda^4\,K^D]-[\Lambda^5\,K^D]$.
Using Corollary \ref{corMD}, we have 
$\dim\,MD_5(D)=\dim\, L({\bf 2,1^3};D)=D {D\choose 4} -{D\choose 5}$.

By Corollary \ref{corMD} we have
$MD_6(D)\simeq L({\bf 1^6}; D)\oplus 
L({\bf 2,1^4})(D)\oplus L({\bf 2^2,1^2};D)
\oplus L({\bf 3,1^3 }; D)$ which is isomorphic to
$\Lambda^6 K^D\oplus K^D\otimes MD_5(D)$, from which the formula follows.

We have $K^D\otimes L({\bf 3,1^3}; D)=L({\bf 4,1^3}; D)
\oplus L({\bf 3,2,1^2}; D)  \oplus L({\bf 3,1^4}; D)$ and
$\Lambda^2\,K^D\otimes \Lambda^2\,K^D= L({\bf 2^2,1^3}; D)\oplus L({\bf 2,1^5}; D) \oplus \Lambda^5\,K^D$. 
By Corollary \ref{corMD}, $MD_7(D)$ is isomorphic to

$[L({\bf 4,1^3}; D)\oplus L({\bf 3,2,1^2}; D) 
\oplus L({\bf 3,1^4}; D)
\oplus L({\bf 2^2,1^3}; D)\oplus L({\bf 2,1^5}; D)]^2$

\noindent follows that, as a virtual module, we have
 
$[MD_7(D)]= 2 ([K^D]  [L({\bf 3,1^3}; D)]
+[\Lambda^2\,K^D] [\Lambda^2\,K^D]-[\Lambda^7\,K^D])$,

\noindent what proves the formula.

\end{proof}

\noindent
{\it 2.3 The case $D=1$}  

\noindent
Set $\Phi=\prod\limits_{n=1}^{\infty}(1-z^nt)(1-z^nt^{-1})(1-z^n)$ and, for any $n\geq 0$, set 
$P_n=t^{-n}+t^{-n+1}+\dots+t^n$. 

Observe that $\Res_{t=0} (t^{-1}-1)P_n dt$ is $1$ for $n=0$, and $0$ when $n>0$. Similarly we have
$\Res_{t=0} (1-t)P_n dt$ is $1$ when $n=1$ and $0$ otherwise. Using the classical Jacobi triple identity
\cite{HW}

\centerline
{$ \Phi =\sum_{n=0}^{\infty} (-1)^n z^{\frac{n(n+1)}{2}}
P_n$}

\noindent it follows that

\centerline
{$\Res_{t=0} (t^{-1}-1)\Phi dt=1$ and  
$\Res_{t=0} (1-t)\Phi dt=-z$.}

\noindent therefore we have $a_n(1)=1$ and $b_n(1)=0$ for any $n$.
This is in agreement with the fact that 
$J(1)=xK[x]$ and $\Inner\,J(1)=0$, and so Conjecture 1
holds for $D=1$.

\bigskip
\noindent
{\it 2.4 The case $D=2$} 

\noindent
Recall the following

\begin{Shtheorem} We have J(2)=CJ(2).
\end{Shtheorem}

Therefore, it is easy to compute $\dim\,J_n(2)$. Using that
$\dim\,\Inner_n\,J(2)=\dim\,A_n(2)-c_n(2)$, the computation
of $\dim\,\Inner_n\,J(2)$ is easily deduced from
the value of $c_n(2)$. These values are computed in
\cite{W} for $n\leq 15$ (it is the number $N(n,2) -N'(n,2)$  of \cite{W}). 
From which it has been
checked using a computer that $a_n(2)=s_n(2)=\dim J_n(2)$ and 
$b_n(2)=r_n(2)=\dim\Inner_n\, J(2)$ for any $n\leq 15$.

\bigskip
\noindent{\it 2.5 The case $D=3$} 

\noindent
In the case $D=3$,  recall the following

\begin{SCtheorem} The map $J(3)\to CJ(3)$ is onto.
\end{SCtheorem}

\begin{Mactheorem} The space $SI(3)$ contains no
Jordan polynomials of degree $\leq 1$ into $x_3$.
\end{Mactheorem}

The space $SI_8(3)$ contains the special identity $G_8$,
discovered in \cite{Gl}, which
is called the {\it Glennie's identity}. It is multi-homogenous
of degree $(3,3,2)$. In addition of the original
expression,   there are  two simpler formulas  due
to Thedy and Shestakov \cite{McC} and \cite{Sverch}.

\begin{GlennieIdentity} 1. We have $G_8\neq 0$. 

2.  $SI_8(3)$ is the $3$-dimensional $GL(3)$-module generated by $G_8$.
\end{GlennieIdentity}

Assertion 1 is proved in \cite{Gl}, and the fact $G_8$ generates
a $3$-dimensional  $GL(3)$-module is
implicite in \cite{Gl}.  However, we did not find a full proof of Assertion 2
 in the litterature, but the experts consider it as true. 
Note that by Macdonald's Theorem, no partition 
${\bf m}=(m_1,m_2,m_3)$ with $m_3\leq 1$ is a weight
of $SI(3)$. It seems to be known that $(4,2,2)$ is not
a weight of $SI_8(3)$ and  that the highest weight $(3,3,2)$ has multiplicity one.

It follows from Glennie's Theorem and Shirshov-Cohn's Theorem that

\centerline{$\dim \,J_n(3)=\dim SJ_n(3)=\dim\,CJ_n(3)=s_n(3)$ for 
$n\leq 7$,} 

\noindent while the previous 
argument shows (or suggests, if the reader do 
consider Assertion 2 as a conjecture) that $\dim \,J_8(3)=s_8(3)+3$ 

It follows from Lemma \ref{Innerleq8} and Shirshov-Cohn's Theorem that

\centerline{$\dim\,\Inner_n\,J(3)=\Inner_n\,SJ(3)=\dim\Inner_n\,CJ(3)=r_n(3)$ for $n\leq 8$.}

\noindent
This correlates with the computer data that
$a_n(3)=s_n(3)$  for $n\leq 7$ , while $a_8(3)=s_8(3)+3$
and similarly $b_n(3)=r_n(3)$ for $n\leq 8$.

\bigskip
\noindent
{\it 2.6 The case $D=4$} 

\noindent
By Glennie's Theorem we have $J_n(4)=SJ_n(4)$ for $n\leq 7$,
thus we have $\dim\,J_n(4)=s_n(4)-\dim\,M_n(4)$ for $n\leq 7$. 
Therefore it follows from Lemma \ref{dimM} that
$\dim\,J_n(4)=s_n(4)$, for $n\leq 3$, while
$\dim\,J_4(4)=s_4(4)-1$, 
$\dim\,J_5(4)=s_5(4)-4$,
$\dim\,J_6(4)=s_6(4)-20$,
and $\dim\,J_7(4)=s_7(4)-60$.

Similarly,  we have $\Inner_n\,J(4)=\Inner_n\,SJ_k(4)$
for $n\leq 8$
by Lemma \ref{Innerleq8} and therefore we have 
$\dim\,\Inner\,J_n(4)=r_n(4)-\dim\,MD_n(4)$ for $n\leq 8$. 
It follows from Lemma \ref{dimMD} that
$\dim\,\Inner_n\,J(D)=r_n(4)$, for $n\leq 4$ while
$\dim\,\Inner_5\,J(D)=r_5(4)-4$
$\dim\,\Inner_6\,J(D)=r_6(4)-16$ and 
$\dim\,\Inner_7\,J(D)=r_7(4)-80$.

 Some computer computations show  that
these dimensions agrees with the numbers $a_n(4)$ and $b_n(4)$
of Conjecture 1.

\bigskip
\noindent{\it 2.6 Conclusion}

\noindent
These numerical computations show that the Conjecture 1 takes into account the "erratic" formula for  $\dim\,\Inner\,J_n(2)$, also it
detects the special identities in $J_8(3)$ and the
missing  tetrads  in $J(4)$.  It provides some support for the conjecture, because these facts were not put artificially in Conjecture 1. Later on, we will see that  Conjecture 1 is also supported by some theoretical results and by the more theoretical conjectures 2 and 3.

\section*
{3. The Conjecture 2}

Conjecture 1 is an elementary statement, but it looks quite mysterious. In this section, the very natural, but less elementary,   Conjecture 2
will be stated. At the end of the section, it will be proved that Conjecture 2 implies Conjecture 1.

\bigskip
\noindent
{\it 3.1 The Tits functor $T:{\bf Lie_T}\rightarrow{\bf Jor}$.}

\noindent
Let ${\bf T}$ be the category of $PSL(2)$-modules $M$ such that $M=M^{\fsl_2}\oplus M^{ad}$, where $M^{ad}$ denotes the  isotypical component of $M$ of adjoint type.
Let ${\bf Lie_T}$ 
be the category of Lie algebras $\fg$ in category ${\bf T}$
on which $\fsl_2$ acts by derivation
(respectively endowed with an embedding 
$\fsl_2\subset\fg$). 

Let ${\bf Jor}$ 
be the category of Jordan algebras. Let $e$, $f$, $h$ be the usual basis 
of $\fsl_2$.  For  $\fg\in {\bf Lie_T}$, set

\centerline{
 $T(\fg)=\{x\in\fg\,|\,[h,x]=2x\}$.}
 
 \noindent Then $T(\fg)$ has an algebra structure, where the product 
 $x\circ y$ of any two elements $x,\,y\in T(
 \fg)$ is defined by:

\centerline{$
x\circ y=\frac12\,[x,f\cdot y]$.}

\noindent It turns out that $T(\fg)$ is a Jordan algebra
\cite{T}. So the map
$\fg \mapsto T(\fg)$ is a  functor
$T:{\bf Lie_T}\to {\bf Jor}$. 
It will be called the {\it Tits functor}.

\bigskip
\noindent
{\it 3.2 The TKK -construction}

\noindent
To each Jordan algebra $J$ is associated a Lie algebra
$TKK(J)\in {\bf Lie_T}$ which is defined as follows. As a vector space we have

\centerline{$TKK(J)=\Inner\,J \oplus \fsl_2\otimes J$.}

\noindent For $x\in \fsl_2$ and $a \in J$, set
$x(a)=x\otimes a$. The bracket $[X,Y]$ of two elements in $TKK(J)$ is defined as follows. When at least one
argument lies in $\Inner\,J $, it is defined by the fact that $\Inner\,J $ is a Lie algebra acting on $J$. Moreover 
 the bracket of two elements 
$x(a),\,y(b)$ in $\fsl_2\otimes J$ is given by

\centerline {$[x(a), y(b)]=[x,y](a\circ b) +
\kappa (x,y)\,\partial_{a,b}$}

\noindent where $\kappa$ is the invariant bilinear form on $\fsl_2$ normalized by the condition $\kappa(h,h)=4$.
This construction first appears in Tits paper \cite{T}. Later this definition has been generalized  by Koecher \cite{Koe} and Kantor \cite{Kan} in the theory of Jordan pairs (which is beyond the scope of this paper) and therefore the Lie algebra $TKK(J)$ is usually called the {\it TKK-construction}. 

However the notion of an inner derivation  is not functorial and therefore  the map  $J\in{\bf Jor}\mapsto TKK(J)\in{\bf Lie_T}$ is {\it not} functorial.

\bigskip
\noindent
{\it 3.3 The Lie algebra $TAG(J)=\fsl_2\,J$}

\noindent
More recently, Allison and Gao \cite {AG}
found another generalization (in the theory of structurable algebras) of Tits construction, see also \cite{Smirnov} and \cite{LM}. In the context of a Jordan algebra $J$, this
provides a refinement of the TKK-construction. The corresponding Lie
algebra will be called the {\it Tits-Allison-Gao construction} and
it will be denoted by  $TAG(J)$ or simply by $\fsl_2\,J$.

Let $J$ be any Jordan algebra. First  
$TAG(J)$ is defined as a vector space. Let $R(J)\subset 
\Lambda^2 J$ be the linear span of all $a\wedge a^2$ where $a$ runs over $J$ and  set  ${\cal B}J=\Lambda^2 J/R(J)$. Set 

\centerline{$TAG(J)={\cal B}J\oplus \fsl_2\otimes J$.}

Next, define the Lie algebra structure on $TAG(J)$.
 For  $\omega=\sum_i\,a_i\wedge b_i\in\Lambda^2\,J$, set
$\partial_{\omega}
=\sum_i\,\partial_{a_i,b_i}$ and let
$\{\omega\}$ be its image in ${\cal B}J$.
By Jordan identity we have
$\partial_{a,a^2}=0$, so there is a natural map

\centerline{
${\cal B}J \rightarrow \Inner J,\,
\{\omega\}\mapsto \partial_{\omega}$.}

\noindent Given another element  $\omega'=\sum_i\,a_i'\wedge b_i'$ in $\Lambda^2\,J$, set 
$\delta_{\omega}.\omega'=
\sum_i\,(\partial_\omega.a_i')\wedge b_i'
+ a_i'\wedge \partial_\omega. b_i'$. Since
$\partial_\omega$ is a derivation, we have
$\partial_\omega.R(J)\subset R(J)$ and therefore
we can set $\partial_\omega.\{\omega'\}=
\{\partial_\omega.\omega'\}$.

\noindent 
The bracket on $TAG(J)$ is defined by the following rules

1. $[x(a),y(b)]=[x,y](a\circ b)+\kappa(x,y) \{a\wedge b\}$, 

2. $[\{\omega\},x(a)]=x(\partial_{\omega} a)$, and

3. $[\{\omega\},\{\omega'\}]=\partial_{\omega}. \{\omega'\}$,

\noindent for any $x,y\in\fsl_2$, $a,b\in J$
and $\{\omega\},\{\omega'\}\in{\cal B}J$, where, as before
we denote  by  $x(a)$ the element $x\otimes a$ and where
$\kappa(x,y)={1\over 2} \,\Tr\, \ad(x)\circ \ad(y)$.

It is proved in \cite{AG} that $TAG(J)$ is a Lie algebra (indeed the tricky part is the proof that $[\{ \omega\},\{\omega'\}]$ is skew-symmetric). In general
$TKK(J)$ and $TAG(J)$ are different. For  $J=K[t,t^{-1}],$ we have $\Inner (J)=0$,
while ${\cal B}J$ is a one-dimensional Lie algebra. 
Therefore $TKK(J)=\fsl_2(K[t,t^{-1}])$ while
$TAG(J)$ is the famous affine Kac-Moody Lie algebra 
$\widehat{\fsl_2}(K[t,t^{-1}])$.

\begin{lemma}\label{ucover} Let $\fg\in{\bf Lie_T}$. Then there is
a Lie algebra morphism

\centerline{$\theta_\fg:TAG(T(\fg))\rightarrow \fg$}

\noindent which is the identity on $T(\fg)$.

\end{lemma}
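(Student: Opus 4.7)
The construction of $\theta_\fg$ is forced by the structure of $TAG$: the subspace $\fsl_2 \otimes T(\fg)$ generates $TAG(T(\fg))$ as a Lie algebra, since rule 1 gives $\{a \wedge b\} = \tfrac{1}{\kappa(e,f)}([e(a), f(b)] - h(a \circ b))$. So $\theta_\fg$ is determined by its values on $\fsl_2 \otimes T(\fg)$, which are in turn forced by $\fsl_2$-equivariance from the requirement $\theta_\fg(e \otimes a) = a$. Concretely: since $\fg = \fg^{\fsl_2} \oplus \fg^{ad}$ with $\fg^{ad}$ a sum of adjoint $\fsl_2$-modules, and since $\fg$ carries no weight $> 2$, the subspace $T(\fg) = \fg_2$ consists of highest-weight vectors; for each $a \in T(\fg)$ there is a unique $\fsl_2$-module map $\pi_a : \fsl_2 \to \fg$ with $\pi_a(e) = a$, given by $\pi_a(h) = -f \cdot a$ and $\pi_a(f) = -\tfrac{1}{2} f \cdot (f \cdot a)$. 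Set $\theta_\fg(x \otimes a) := \pi_a(x)$; this is an $\fsl_2$-equivariant isomorphism $\fsl_2 \otimes T(\fg) \simeq \fg^{ad}$, and on $\cB T(\fg)$,
$$
\theta_\fg(\{a \wedge b\}) := \tfrac{1}{\kappa(e,f)}\bigl(f \cdot (a \circ b) - \tfrac{1}{2}[a, f \cdot (f \cdot b)]_\fg\bigr).
$$

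Two checks are needed for this to descend from $\Lambda^2 T(\fg)$ to $\cB T(\fg)$. (i) Skew-symmetry in $(a,b)$ follows from Jacobi in $\fg$ together with the Tits formula $[a, f \cdot b] = 2(a \circ b)$, which produce the identity $[a, f \cdot (f \cdot b)] + [b, f \cdot (f \cdot a)] = 4\, f \cdot (a \circ b)$. (ii) Vanishing on $a \wedge a^2$: a parallel Jacobi manipulation (using only Jacobi, the Tits formula, and the vanishing of the weight-$(-4)$ part of $\fg$) produces the unexpected relation $\theta_\fg(\{a \wedge a^2\}) = 2\, \theta_\fg(\{a^2 \wedge a\})$. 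Combined with the skew-symmetry of (i) this forces $3\, \theta_\fg(\{a \wedge a^2\}) = 0$, hence $\theta_\fg$ vanishes on $R(T(\fg))$ in characteristic zero.

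It remains to verify the three $TAG$ bracket rules. Rule 1 for $(x,y)=(e,f)$ holds by construction; the remaining cases reduce to $\fsl_2$-equivariance of $\theta_\fg$. For Rule 2, namely $[\theta_\fg(\{a \wedge b\}), c]_\fg = \partial_{a,b}(c)$ for $c \in T(\fg)$, a direct Jacobi expansion yields $\tfrac{2}{\kappa(e,f)}\bigl(a \circ (b \circ c) - b \circ (a \circ c)\bigr)$, which equals $\partial_{a,b}(c) = a \circ (c \circ b) - (a \circ c) \circ b$ by commutativity of $\circ$ and the normalization $\kappa(e,f) = 2$ (coming from $\kappa(h,h)=4$). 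Rule 3 then follows: expand $\theta_\fg(\{\omega'\})$ as a bracket of elements of $\fsl_2 \otimes T(\fg)$, apply Jacobi in $\fg$ together with Rule 2, and invoke the Jordan identity once to conclude that $\partial_\omega$ is a derivation of $T(\fg)$. The main obstacle is step (ii): pinning down the identity $\theta_\fg(\{a \wedge a^2\}) = 2\, \theta_\fg(\{a^2 \wedge a\})$ requires a careful Jacobi computation in $\fg$. It is striking that the Jordan identity enters the argument only at the very last step (Rule 3); descent through $R(T(\fg))$ is forced in characteristic zero by the $\fsl_2$-structure alone.
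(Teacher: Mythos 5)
Your proof is correct in outline and arrives at the same place as the paper, but it takes a noticeably more computational route; the paper's version is shorter and worth internalizing. The key shortcut you miss is that, rather than defining $\theta_\fg$ on $\{a\wedge b\}$ by an explicit formula and then verifying skew-symmetry by hand, the paper reads off the whole structure from $\fsl_2$-equivariance at once: writing $\fg=\fd\oplus\fsl_2\otimes T(\fg)$ with $\fd=\fg^{\fsl_2}$, the bracket of two adjoint-type elements decomposes as
\[
[x(a),y(b)]=[x,y](a\circ b)+\kappa(x,y)\,\psi(a,b),
\]
because $\Hom_{\fsl_2}(\fsl_2^{\otimes 2},K)=K\,\kappa$ forces the invariant component to be of this shape. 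Skew-symmetry of $\psi$ is then automatic (compare with $-[y(b),x(a)]$ and use $\kappa$ symmetric, $[x,y]$ antisymmetric), so $\psi:\Lambda^2 T(\fg)\to\fd$ is already defined on the exterior square; you spend a separate Jacobi argument proving exactly this. For the descent through $R(T(\fg))$, the paper applies the Jacobi identity to the single triple $h(a),e(a),f(a)$: the $\fd$-component of the cyclic sum is $\bigl(\sum_{\mathrm{cyc}}\kappa(x,[y,z])\bigr)\psi(a,a^2)=3\,\kappa(h,[e,f])\,\psi(a,a^2)$, and $\kappa(h,[e,f])=4\ne 0$ kills $\psi(a,a^2)$. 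Your two-step argument (a Jacobi computation producing $\theta_\fg(\{a\wedge a^2\})=2\,\theta_\fg(\{a^2\wedge a\})$, then skew-symmetry) encodes the same factor of $3$, but the paper's cyclic-symmetry phrasing makes the mechanism transparent and avoids the computation you flag as the "main obstacle." Both versions then leave verification of the three bracket rules to the reader (the paper with an "easy to check"; you with a sketch that correctly locates the single use of the Jordan identity in Rule 3). In short: your approach is sound and more self-contained, but the representation-theoretic packaging in the paper buys you skew-symmetry for free and collapses the vanishing on $R$ to a one-line Jacobi argument.
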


\begin{proof}
Set $\fd=\fg^{\fsl_2}$, so we have
$\fg=\fd\oplus \fsl_2\otimes T(\fg)$. Since
$\Hom_{\fsl_2}(\fsl_2^{\otimes 2},K)=K.\kappa$,
there is a bilinear map
$\psi:\Lambda^2T(\fg)\rightarrow \fd$ such that

\centerline
{$[x(a),y(b)]=[x,y](a\circ b)+\kappa(x,y)\,\psi(a,b)$}

\noindent
for any $x,\,y\in\fsl_2$ and $a,\,b\in J$. 
For $x,\,y\,,z\in \fsl_2$, we have

\centerline
{$[x(a),[y(a),\,z(a)]]=[x,\,[y,\,z]](a^3)+
\kappa(x,[y,z])\,\psi(a,a^2)$.}

The map $(x,y,z)\mapsto \kappa(x,[y,z])$ has a cyclic symmetry of order $3$. Since $\kappa(h,[e,f])=4\neq 0$, the Jacobi identity for 
the triple 
$h(a),\,e(a),\,f(a)$ implies that 

\centerline{$\psi(a,a^2)=0$ for any $a\in J$.}

\noindent Therefore the map $\psi:\Lambda^2T(\fg)\rightarrow \fd$ factors trough ${\cal B}T(\fg)$.
A linear map $\theta_{\fg}:TAG(T(\fg))\rightarrow \fg$ is defined
by requiring that $\theta_{\fg}$ is the identity on 
$\fsl_2\otimes T(\fg)$ and $\theta_{\fg}=\psi$ on ${\cal B}T(\fg)$. It is easy to check that $\theta_{\fg}$ is a morphism of Lie algebras.

\end{proof}

It is clear that the map $TAG:J\in {\bf Jor}\mapsto TAG(J)\in{\bf Lie_T}$ is a functor, and more precisely we have:

\begin{lemma}\label{adjointfunctor}

The functor $TAG:{\bf Jor}\rightarrow{\bf Lie_T}$ is the left adjoint of the Tits functor $T$, namely:

\centerline{
$\Hom_{\bf Lie_T} (TAG(J),\fg)=
\Hom_{\bf Jor}(J,T(\fg))$}
\noindent
for any  $J\in {\bf Jor}$  anf $\fg\in{\bf Lie_T}$.
\end{lemma}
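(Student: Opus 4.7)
The plan is to produce a natural bijection between $\Hom_{\bf Lie_T}(TAG(J), \fg)$ and $\Hom_{\bf Jor}(J, T(\fg))$ via explicit maps in both directions. The key ingredients are Lemma \ref{ucover}, which supplies the counit $\theta_\fg: TAG(T(\fg)) \to \fg$; the (evident) functoriality of $TAG$; and a unit isomorphism $\eta_J: J \xrightarrow{\sim} T(TAG(J))$.

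First I would establish $\eta_J$. The $\fsl_2$-action on $TAG(J)$ is the adjoint action on the $\fsl_2$-factor of $\fsl_2 \otimes J$ and trivial on $\cB J$, so the $h$-weight-$2$ subspace of $TAG(J)$ is exactly $e \otimes J$. A direct computation using $[e,-h] = 2e$ and $\kappa(e,h) = 0$ gives
\[
\tfrac{1}{2}[e \otimes a,\ f \cdot (e \otimes b)] \ =\ \tfrac{1}{2}[e \otimes a,\ -h \otimes b]\ =\ e \otimes (a \circ b),
\]
so $a \mapsto e \otimes a$ is a Jordan-algebra isomorphism $\eta_J: J \xrightarrow{\sim} T(TAG(J))$, natural in $J$.

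Next I would define the two maps of the adjunction: send $\Phi \in \Hom_{\bf Lie_T}(TAG(J), \fg)$ to $T(\Phi) \circ \eta_J$, and send $\phi \in \Hom_{\bf Jor}(J, T(\fg))$ to $\theta_\fg \circ TAG(\phi)$. Both constructions are manifestly natural in $J$ and $\fg$. That the composition $\phi \mapsto \theta_\fg \circ TAG(\phi) \mapsto T(\theta_\fg \circ TAG(\phi)) \circ \eta_J$ recovers $\phi$ is then immediate: on $a \in J$ it evaluates to $\theta_\fg(e \otimes \phi(a)) = \phi(a)$, since $\theta_\fg$ is the identity on $T(\fg)$ by Lemma \ref{ucover}.

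The reverse composition requires slightly more care: given $\Phi$, set $\phi = T(\Phi) \circ \eta_J$; I must show $\Phi = \theta_\fg \circ TAG(\phi)$. Both are morphisms in ${\bf Lie_T}$ and agree on $e \otimes J$, where each sends $e \otimes a$ to $\phi(a) \in T(\fg)$. By $\fsl_2$-equivariance they then agree on the remaining weight spaces $h \otimes J$ and $f \otimes J$, and since $\{a \wedge b\}$ is expressed as $\tfrac{1}{2}([e \otimes a, f \otimes b] - h \otimes (a \circ b))$ in $TAG(J)$, the two Lie morphisms coincide on $\cB J$ as well. The main potential obstacle --- compatibility of $TAG(\phi)$ with the relations defining $\cB J = \Lambda^2 J / R(J)$ --- is already absorbed by the functoriality of $TAG$: since $\phi$ is a Jordan morphism, it sends $a \wedge a^2$ to $\phi(a) \wedge \phi(a)^2 \in R(T(\fg))$, so $TAG(\phi)$ is well-defined, and the adjunction is clean.
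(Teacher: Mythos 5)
Your proof is correct and takes essentially the same route as the paper: both rely on the counit $\theta_\fg$ from Lemma \ref{ucover} together with the functoriality of $TAG$, and both use the fact that $TAG(J)$ is generated by $\fsl_2\otimes J$ to pin down a Lie morphism by its restriction to $T(TAG(J))$. The only difference is one of presentation: the paper simply asserts $T(TAG(J))=J$ and argues injectivity/surjectivity of the restriction map $\mu$, whereas you make the unit $\eta_J: a\mapsto e\otimes a$ explicit (verifying $\kappa(e,h)=0$ and $-[e,h]=2e$), and you spell out both triangle identities, which is a slightly more careful but equivalent way of organizing the same argument.
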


\begin{proof}
Let $J\in {\bf Jor}$  and $\fg\in{\bf Lie_T}$.
Since $T(TAG(J))=J$, any morphism of Lie algebra
$TAG(J)\rightarrow\fg$ restricts to a morphism of Jordan algebras $J\rightarrow T(\fg)$, so we there is a natural map

\centerline{$\mu: \Hom_{\bf Lie_T} (TAG(J),\fg)\rightarrow
\Hom_{\bf Jor}(J,T(\fg))$.}

Since the Lie algebra $TAG(J)$ is generated by 
$\fsl_2\otimes J$, it is clear that $\mu$ is injective.
Let  $\phi:J\rightarrow T(\fg)$ be a morphism of Jordan algebras. By functoriality of the $TAG$-construction, we get a Lie algebra morphism

\centerline{$TAG(\phi):TAG(J)\rightarrow TAG(T(\fg))$}

\noindent  and by Lemma \ref{ucover} there is a canonical
Lie algebra morphism

\centerline{$\theta_{\fg}:TAG(T(\fg))\rightarrow \fg$.}

\noindent So 
$\theta_{\fg}\circ TAG(\phi):TAG(J)\to\fg$ 
extends $\phi$ to a morphism of Lie algebras.
Therefore $\mu$ is bijective.

\end{proof}

\bigskip
\noindent
{\it 3.4 Statement of Conjecture 2}

\noindent
Let $D\geq 1$ be an integer and let $J(D)$ be the 
free Jordan algebra on $D$ generators.

\begin{lemma}\label{freeTAG}
The Lie algebra $\fsl_2\,J(D)$ is  free  in
the category ${\bf Lie_T}$.
\end{lemma}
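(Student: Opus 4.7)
The plan is to deduce the freeness of $\fsl_2 J(D) = TAG(J(D))$ in ${\bf Lie_T}$ by composing the adjunction of Lemma \ref{adjointfunctor} with the universal property defining $J(D)$ as a free Jordan algebra.

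First I would clarify the notion of ``free'' in ${\bf Lie_T}$. Since the natural target of a morphism out of a free object on $D$ generators should allow the generators to be sent to $D$ arbitrary ``admissible'' elements, and since the weight-$2$ component $T(\fg)$ is precisely where Jordan generators live, the appropriate forgetful functor ${\bf Lie_T} \to {\bf Set}$ is $\fg \mapsto T(\fg)$. Accordingly, a free object $F_D$ on $D$ generators in ${\bf Lie_T}$ is characterized by the universal property
\[
\Hom_{\bf Lie_T}(F_D,\fg) \;=\; T(\fg)^{D} \qquad \text{for every } \fg \in {\bf Lie_T}.
\]

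Next, by the defining universal property of $J(D)$ as the free Jordan algebra on $D$ generators, one has $\Hom_{\bf Jor}(J(D),A) = A^{D}$ for every Jordan algebra $A$. Applying this with $A = T(\fg)$ and then invoking Lemma \ref{adjointfunctor}, we obtain
\[
\Hom_{\bf Lie_T}(TAG(J(D)),\fg) \;=\; \Hom_{\bf Jor}(J(D),T(\fg)) \;=\; T(\fg)^{D},
\]
which is exactly the universal property of $F_D$. Hence $\fsl_2 J(D)$ is free in ${\bf Lie_T}$ on the $D$ generators $x_1,\dots,x_D \in J(D) = T(TAG(J(D)))$.

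There is no real obstacle once Lemma \ref{adjointfunctor} is available: the whole argument is a formal composition of two adjunctions (free Jordan algebra $\dashv$ forgetful, and $TAG \dashv T$), and all the genuine content sits in the construction of $TAG$ and the verification of its adjoint property, which has already been carried out. The only point that might deserve an explicit remark is the identification of the ``generators'' of $F_D$ with the images of $x_1,\dots,x_D$, which is tautological from how the bijection above unwinds.
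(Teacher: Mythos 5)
Your argument is exactly the one the paper intends: the paper's proof of Lemma \ref{freeTAG} is the single sentence "The lemma follows from Lemma \ref{adjointfunctor} and the formal properties of the adjoint functors," and your proposal just unwinds that composition of the two adjunctions (free Jordan algebra $\dashv$ forgetful, and $TAG \dashv T$) explicitly. Same approach, correctly carried out.
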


The lemma follows from Lemma \ref{adjointfunctor} and the formal properties  of the adjoint functors. 

Let $k$ be a non-negative integer. Since   $\Lambda^k\,\fsl_2\,J(D)$ is a direct sum of $\fsl_2$-isotypical components of type $L(0),\,L(2),\dots,L(2k)$ there is a similar isotypcal decomposition of $H_k(\fg)$.
For an ordinary free Lie algebra
$\fm$, we have $H_k(\fm)=0$ for any $k\geq 2$. Here 
$\fsl_2\,J(D)$ is  free relative to category ${\bf Lie_T}$.
Since only the trivial and adjoint $\fsl_2$-type occurs in
the category  ${\bf T}$, the following conjecture seems very natural

\begin{conj}
We have

\centerline{$H_k(\fsl_2\,J(D))^{\fsl_2}=0$, and }

\centerline{$H_k(\fsl_2\,J(D))^{ad}=0$,}

\noindent for any $k\geq 1$.
\end {conj}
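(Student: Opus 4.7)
The plan is to deduce the vanishing from an operadic resolution argument that exploits the freeness of $\fg:=\fsl_2\,J(D)$ in the category ${\bf Lie_T}$. I would compute the Lie algebra homology $H_*(\fg)$ via the Chevalley--Eilenberg complex $\Lambda^{\bullet}\fg$ and track the $\fsl_2$-isotypical decomposition throughout, treating the known cases $k=1$ (trivial), $k=2$ (Allison--Gao), and the $\fsl_2$-invariant part for $k=3$ (Section 4) both as base cases and as guides for the machinery.

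The first step is to exploit the natural splitting $\fg=\cB J(D)\oplus(\fsl_2\otimes J(D))$, where the first summand is entirely of $\fsl_2$-type $L(0)$ and the second entirely of type $L(2)$. Filtering $\Lambda^{\bullet}\fg$ by the number of factors taken from $\fsl_2\otimes J(D)$ produces a spectral sequence whose $E^1$-page can be analyzed by applying Schur--Weyl duality to the $\fsl_2$-factors. Since $\dim\fsl_2=3$, only Young diagrams with at most three rows contribute, and the $\fsl_2$-content of each $\Lambda^k(\fsl_2\otimes J(D))$ is explicitly computable; in particular, the subcomplexes corresponding to fixed isotypes $L(0)$ and $L(2)$ admit a clean combinatorial description in terms of (shifted, signed) tensor powers of $J(D)$.

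The second step is to use the adjunction $TAG\dashv T$ of Lemma~\ref{adjointfunctor} to identify the $\fsl_2$-invariant and adjoint components of $H_*(\fg)$ with derived functors of Jordan-algebraic abelianization, respectively derivations, applied to the free Jordan algebra $J(D)$. Because $J(D)$ is free for the Jordan operad, these derived functors are expected to vanish in positive degree. The cyclicity of the Jordan operad (Theorem~1) enters as a crucial technical input: the extra $\fS_{n+1}$-symmetry on multilinear components of degree $n$ is precisely what is needed for the $L(0)$- and $L(2)$-isotypes of $\Lambda^{\bullet}\fg$ to assemble into the Koszul-type complex computing operadic cohomology of the Jordan operad on $J(D)$. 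Concretely, I would construct a double complex pairing $\Lambda^{\bullet}\fsl_2$ with the bar resolution of the Jordan operad, then run the two associated spectral sequences: one collapses to a free resolution of $J(D)$ and is therefore acyclic, and the other identifies $H_*^{\fsl_2}\oplus H_*^{ad}$ with operadic homology groups that vanish by freeness.

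The main obstacle is the operadic interpretation of the $L(2)$-isotype. The $\fsl_2$-invariant part of $H_*(\fg)$ corresponds fairly directly to operadic cohomology of the Jordan operad -- this is essentially the content of Conjecture~3 -- but the adjoint part corresponds to a derivation-twisted variant whose vanishing on free Jordan algebras is considerably less transparent, and probably requires a form of Koszul duality for the Jordan operad that is not established in the literature. Even granting the correct operadic framework, the spectral sequence argument must be supplemented by an analysis of higher differentials in order to rule out contributions to $H_k^{\fsl_2}$ or $H_k^{ad}$ coming from isotypes lower than $L(4)$ that may appear transiently on intermediate pages. The known ad hoc calculations for small $k$ suggest that each individual $k$ can be handled in principle, but a uniform argument will likely require genuinely new operadic input beyond Theorem~1.
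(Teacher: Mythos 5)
The statement you have been asked to prove is Conjecture~2 of the paper, and the paper does not prove it --- it is explicitly left open. The paper only establishes the special cases $k=1$ (trivial), $k=2$ (both the trivial and adjoint isotypes vanish, so $H_2(\fsl_2\,J(D))$ is $L(4)$-isotypical), and the $\fsl_2$-invariant part for $k=3$. Your sketch is not a proof of the full statement, and you say so yourself, noting that a uniform argument ``will likely require genuinely new operadic input beyond Theorem~1.'' That self-assessment is correct: your plan has genuine gaps --- there is no established Koszul duality for the Jordan operad, no control of higher differentials in the proposed spectral sequences, and no operadic interpretation of the adjoint isotype --- so the argument does not close, and the conjecture remains open.

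It is also worth flagging that the mechanism the paper actually uses for its partial results is quite different from, and considerably more elementary than, your spectral-sequence/operadic-bar-resolution plan. The key step is Lemma~\ref{techvan}: for any finite-dimensional $M$ in ${\cal M}_{\bf T}(\fsl_2\,J(D))$ (and, by a truncation argument, any suitably graded such $M$), one has $H_2(\fsl_2\,J(D),M)^{\fsl_2}=0$. The proof is a lifting argument, not a resolution: an $\fsl_2$-invariant $2$-cocycle with values in $M^*$ defines an abelian extension of $\fsl_2\,J(D)$ inside the category ${\bf Lie_T}$, which must split because $\fsl_2\,J(D)$ is free in that category (Lemma~\ref{freeTAG}). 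This directly kills both $H_2^{\fsl_2}$ and $H_2^{ad}$. The $k=3$ invariant case is then handled by the suspension functor: $\Sigma H_3(\fsl_2\,J(D))^{\fsl_2}=H_2(\fsl_2\,J(D),\Sigma\,\fsl_2\,J(D))^{\fsl_2}=0$, and Lemma~\ref{SigmaF=0} converts the vanishing of the suspension into vanishing of the functor itself. Neither the abelian-extension-via-freeness argument nor the suspension trick appears in your outline, yet these are precisely what make the known cases work. A more promising direction than reaching for an unproved operadic Koszul duality would be to try to iterate or strengthen the suspension argument, since that is the route the paper's own partial progress travels.
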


\bigskip

\noindent
{\it 3.5 Conjecture 2 implies Conjecture 1}

\begin{lemma}\label{nocenter}
Assume that $H_k(\fsl_2\,J(D))^{\fsl_2}=0$ for any odd $k$. Then 
we have ${\cal B}J(D)=\Inner\,J(D)$.
\end{lemma}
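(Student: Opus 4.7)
Set $K := \ker\bigl({\cal B}J(D) \to \Inner\, J(D)\bigr)$ and $\fg := \fsl_2\, J(D) = TAG(J(D))$; the goal is $K = 0$. First observe that $K$ is central in $\fg$ and $\fsl_2$-invariant: if $\{\omega\} \in K$ then $\partial_\omega = 0$, and the bracket rules of Section 3.3 immediately give $[\{\omega\}, x(a)] = x(\partial_\omega a) = 0$ and $[\{\omega\}, \{\omega'\}] = \partial_\omega.\{\omega'\} = 0$. In particular $K \subset {\cal B}J(D) = \fg^{\fsl_2}$ and $\fg / K \cong TKK(J(D))$.

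Assume for contradiction $K \neq 0$, and apply the Hochschild--Serre spectral sequence to the central extension $0 \to K \to \fg \to TKK(J(D)) \to 0$. Since $K$ has trivial $\fsl_2$-action and $PSL(2)$ is reductive, the spectral sequence restricts to $\fsl_2$-invariants, giving
\[
(E^2_{p,q})^{\fsl_2} \;=\; H_p(TKK(J(D)))^{\fsl_2} \otimes \Lambda^q K \;\Longrightarrow\; H_{p+q}(\fg)^{\fsl_2}.
\]
The associated five-term exact sequence, combined with the easily verified fact that $H_1(\fg)^{\fsl_2} = 0$ (the abelianization of $\fg$ is $\fsl_2 \otimes K^D$, which has no $\fsl_2$-invariants), produces a surjective transgression $c \colon H_2(TKK(J(D)))^{\fsl_2} \twoheadrightarrow K$. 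This class $c$ governs the $d_2$ differentials via $d_2(x \otimes \omega) = c(x) \wedge \omega$, and higher differentials are likewise controlled by iterated cap products with $c$.

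Now impose the hypothesis: $H_{2k+1}(\fg)^{\fsl_2} = 0$ for every $k \geq 0$ forces every $(E^\infty_{p,q})^{\fsl_2}$ with $p+q$ odd to vanish. A careful tracking of this vanishing across the spectral sequence yields a cascade of conditions on $\cap c \colon H_m(TKK(J(D)))^{\fsl_2} \to H_{m-2}(TKK(J(D)))^{\fsl_2}$: injectivity for odd $m$ and surjectivity for even $m$. Using $H_1(TKK(J(D)))^{\fsl_2} = 0$, the injectivity statements inductively give $H_{\mathrm{odd}}(TKK(J(D)))^{\fsl_2} = 0$; the surjectivity statements then produce a chain $\cdots \twoheadrightarrow H_4 \twoheadrightarrow H_2 \twoheadrightarrow H_0 \simeq K$, which translates into the non-triviality of every cup power $c^k$ in $H^*(TKK(J(D)))^{\fsl_2}$.

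The main obstacle is the final step, where this ``non-nilpotent'' class $c$ must be excluded. The plan is to leverage the freeness of $\fg$ in ${\bf Lie_T}$ (Lemma~\ref{freeTAG}) together with the $J(D)$-grading: in each fixed graded degree the Chevalley--Eilenberg complex of $TKK(J(D))$ is finite-dimensional, and the free character of $\fg$ should force $H^*(TKK(J(D)))^{\fsl_2}$ to behave like that of a free-Lie-type algebra, for which no positive-degree class can have all powers non-zero. This contradiction forces $K = 0$ and hence ${\cal B}J(D) = \Inner\, J(D)$. The delicate part is making this last nilpotency statement rigorous; I would attempt it by a direct analysis of the Chevalley--Eilenberg cochain complex of $TKK(J(D))$, weighted by the natural grading of $J(D)$, reducing the question to a statement about the cohomology of a free ${\bf Lie_T}$-algebra in each fixed total degree.
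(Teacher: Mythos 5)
Your setup correctly identifies the kernel $K$ of ${\cal B}J(D)\to\Inner J(D)$ as a central, $\fsl_2$-invariant subspace, and the idea to attack the problem via the Hochschild--Serre spectral sequence (or Gysin sequence) for a central extension is the right one. But the argument has a genuine gap at exactly the point you flag: you never prove that the relevant cohomology class is nilpotent, and indeed the way you have set things up makes this hard to prove. By quotienting by the \emph{entire} kernel $K$ at once, you produce a $K$-valued cocycle with no finiteness control, and your hoped-for contradiction (``no positive-degree class can have all powers non-zero'') is neither established nor easy in that generality.

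The paper's proof gets around this with a much cheaper and cleaner move that you are missing. Instead of working with $K$, it picks a single nonzero \emph{homogeneous} element $z\in\Ker\phi$ of some degree $n$ and forms the one-dimensional central quotient $G=\fsl_2\,J(D)/K.z$. The corresponding scalar two-cocycle $\omega$ on $G$ is then homogeneous of degree $n$, i.e. $\omega(G_i\wedge G_j)=0$ unless $i+j=n$; since each graded piece $G_i$ is finite-dimensional (as $\fsl_2\,J(D)$ is an analytic $GL(D)$-module) and only finitely many pairs $(i,j)$ with $i+j=n$ contribute, the bilinear form $\omega$ has \emph{finite rank}. Nilpotency of $c=[\omega]$ is now automatic: $c^{N+1}=0$ once $N$ exceeds half the rank. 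Since the extension is non-trivial, $c\neq 0$, so one takes $N\geq 1$ maximal with $c^N\neq 0$. The Gysin-type long exact sequence then produces $C\in H^{2N+1}(\fsl_2\,J(D))$ with $i_z C=c^N$; since $\fsl_2$ acts reductively and $c^N$ is invariant, $C$ can be chosen $\fsl_2$-invariant, yielding $H^{2N+1}(\fsl_2\,J(D))^{\fsl_2}\neq 0$, contrary to the hypothesis. The decisive idea you need to add is this reduction to a one-dimensional homogeneous central quotient, which converts the nilpotency question into an elementary finite-rank statement; without it, your ``cascade'' and the final nilpotency claim remain unproved.
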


\begin{proof} Assume otherwise, i.e. assume that the
natural map $\phi:{\cal B}J(D)\rightarrow\Inner\,J(D)$
is not injective. Since ${\cal B}J(D)$ and $\Inner\,J(D)$
are analytic $GL(D)$-modules, they are endowed with the natural grading.
Let $z$ be a non-zero homogenous element
$z\in\Ker\,\phi$ and let $n$ be its degree. Set 
$G=\fsl_2\,J(D)/K.z$.
Since $z$ is a homogenous $\fsl_2$-invariant central element, 
$G$ inherits a structure of $\Z$-graded Lie algebra.

Moreover
$z$ belongs to $[\fsl_2\,J(D),\fsl_2\,J(D)]$. 
Therefore $\fsl_2\,J(D)$ is a non-trivial
central extension of $G$. Let 
$c\in H^2(G)$ be the corresponding cohomology class and
let $\omega\in (\Lambda^2\,G)^*$ be a homogenous two-cocycle representing $c$. We have
$\omega(G_i\wedge G_j)=0$ whenever
$i+j\neq n$. It follows that the bilinear map $\omega $ has finite  rank, therefore there exists an integer $N\geq 1$
such that $c^N\neq 0$ but $c^{N+1}=0$.

There is a long exact sequence of cohomology groups
\cite{HS}

\centerline
{$\dots H^k(G)
\buildrel j^* \over \longrightarrow
H^k(\fsl_2\,J(D)) 
 \buildrel i_z \over \longrightarrow 
 H^{k-1}(G)
 \buildrel \wedge c \over \longrightarrow 
 H^{k+1}(G)
 \buildrel j^* \over \longrightarrow \dots$
}

\noindent where $j^*$ is induced by the natural map  
$j:\fsl_2\,J(D)\rightarrow G$, where $i_z$ is the contraction by $z$ and where $\wedge c$ is the mutiplication by $c$. Therefore there exists
$C\in H^{2N+1}(\fsl_2\,J(D))$ such that $c^N=i_z\,C$. Since
$c^N$ is $\fsl_2$-invariant, we can assume that $C$ is 
also $\fsl_2$-invariant, and therefore 

\centerline {$H^{2N+1}(\fsl_2\,J(D))^{\fsl_2}\neq 0$}

\noindent which contradicts the hypothesis.

\end{proof}

\begin{cor}\label{2=>1}
Conjecture 2 implies Conjecture 1.
\end{cor}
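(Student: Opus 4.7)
The plan is to verify that $(a, b) := ([J(D)], [\Inner\,J(D)])$ satisfies the two defining equations of Lemma \ref{combi} for the pair $(A(D), B(D))$, and to conclude by the uniqueness statement in that lemma. The bridge between the Jordan-algebraic data on the left and the $\lambda$-ring identities on the right is an Euler--Poincar\'e computation on the Chevalley--Eilenberg complex of $\fg := \fsl_2\,J(D)$, with the $\fsl_2$-isotypic projections controlled by Conjecture 2.

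First I would invoke Lemma \ref{nocenter}: since Conjecture 2 in particular asserts $H_k(\fg)^{\fsl_2}=0$ for all $k \geq 1$, it identifies ${\cal B}J(D) \cong \Inner\,J(D)$ as analytic $GL(D)$-modules. Consequently $[\fg] = a\,[L(2)] + b$ in ${\cal R}(GL(D)\times PSL(2))$, and it suffices to establish the two identities

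\centerline{$[\lambda([\fg]):L(0)] = 1$ \hskip5mm and \hskip5mm $[\lambda([\fg]):L(2)] = -[K^D]$,}

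\noindent after which uniqueness in Lemma \ref{combi} forces $a = A(D)$ and $b = B(D)$, proving Conjecture 1.

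I would then apply Euler--Poincar\'e to the Chevalley--Eilenberg complex. Since each graded piece $\fg_n$ is finite dimensional, the identity $\lambda([\fg]) = \sum_{k \geq 0} (-1)^k\,[H_k(\fg)]$ holds in ${\cal R}(GL(D)\times PSL(2))$. Projecting to the $L(0)$-isotypic part, only $H_0(\fg) = K$ contributes (by the $\fsl_2$-invariant vanishing of Conjecture 2), giving $[\lambda([\fg]):L(0)] = 1$. For the $L(2)$-projection I would compute $H_1(\fg) = \fg/[\fg,\fg]$ directly: the $TAG$-bracket relations $[h(a), h(b)] = 4\,\{a \wedge b\}$ and $[h(a), e(b)] = 2\,e(a \circ b)$ show that $[\fg,\fg]$ contains ${\cal B}J(D)$ and $\fsl_2 \otimes J(D)^2$, so $H_1(\fg) \cong \fsl_2 \otimes (J(D)/J(D)^2) \cong L(2)\otimes K^D$ and $[H_1(\fg):L(2)] = [K^D]$. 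Combined with the adjoint-type vanishing of $H_k(\fg)^{ad}$ for $k \geq 2$ supplied by Conjecture 2, the alternating sum yields $[\lambda([\fg]):L(2)] = -[K^D]$.

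The main conceptual point is that Conjecture 2 does not force $H_1(\fg)^{ad}$ to vanish; this adjoint-type piece $L(2)\otimes K^D$ is precisely the generating datum of $\fsl_2\,J(D)$ as a free object in ${\bf Lie_T}$, and its presence is exactly what produces the $-[K^D]$ on the right-hand side of the second equation in Lemma \ref{combi}. The remainder of the argument is routine $\lambda$-ring bookkeeping compatible with isotypic projection in the completed ring ${\cal R}(GL(D)\times PSL(2))$.
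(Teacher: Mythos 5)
Your argument is correct and follows essentially the same route as the paper's proof: invoke Lemma~\ref{nocenter} to identify $[\fg]=[J(D)][L(2)]+[\Inner\,J(D)]$, apply the Euler--Poincar\'e identity $\lambda([\fg])=\sum_k(-1)^k[H_k(\fg)]$ on the Chevalley--Eilenberg complex, use the vanishing from Conjecture~2 together with $H_0=K$ and $H_1\simeq K^D\otimes L(2)$ to obtain the two defining equations, and conclude by the uniqueness clause of Lemma~\ref{combi}. The extra detail you supply on the $TAG$-bracket relations to verify $H_1(\fg)\simeq L(2)\otimes K^D$ is a welcome elaboration of a step the paper leaves implicit, but does not change the structure of the argument.
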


\begin{proof} Assume Conjecture 2 holds.
In ${\cal R}_{an}(GL(D)\times PSL(2))$, the identity

\centerline{$[\Lambda^{even}\fsl_2\,J(D)]-[\Lambda^{odd}\fsl_2\,J(D)]=[H_{even}(\fsl_2\,J(D))]-[H_{even}(\fsl_2\,J(D))]$}

\noindent is Euler's characteristic formula.
By definition of the $\lambda$-operation, we have
$[\Lambda^{even}\fsl_2\,J(D)]-[\Lambda^{odd}\fsl_2\,J(D)]
=\lambda([\fsl_2\,J(D)])$. Moreover by
Lemma \ref{nocenter}, we have $[\fsl_2\,J(D)]=[J(D)\otimes L(2)]+
[\Inner\,J(D)]$, therefore we get

\centerline{
$\lambda([J(D)\otimes L(2)]+[\Inner\,J(D)])=
[H_{even}(\fsl_2\,J(D))]-[H_{even}(\fsl_2\,J(D))]$.}

\noindent It is clear that  $H_0(\fsl_2\,J(D))=K$ and

\centerline{$H_1(\fsl_2\,J(D))=\fsl_2\,J(D)/[\fsl_2\,J(D),\fsl_2\,J(D)]
\simeq K^D\otimes L(2)$.}

\noindent Moreover, by
hypothesis, the higher homology groups $H_k(\fsl_2\,J(D))$ contains no trivial or adjoint component. It follows that

\hskip21mm $\lambda([J(D)\otimes L(2)]+[\Inner\,J(D)]):[L(0)]=1$, and

\hskip21mm $\lambda([J(D)\otimes L(2)]+[\Inner\,J(D)]):[L(2)]=-[K^D]$.

\noindent
So by Lemma \ref{combi}, we get $[J(D)]=A(D)$ and 
$[\Inner\,J(D)]=B(D)$.
\end{proof}

\section*
{4. Proved Cases of Conjecture 2}

This section shows three results supporting Conjecture 2:

1. The conjecture holds for $D=1$,

2. As a $\fsl_2$-module, $H_2(\fsl_2\,J(D))$ is 
isotypical of type $L(4)$, and

3. The trivial component of the $\fsl_2$-module $H_3(\fsl_2\,J(D))$ is trivial.

\bigskip
\noindent
{\it 4.1 The $D=1$ case}

\begin{prop} Conjecture 2 holds for $J(1)$.
\end{prop}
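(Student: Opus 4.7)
My approach has two main steps: first identify $TAG(J(1))$ explicitly, then compute its homology via a triple-product identity together with a no-cancellation argument.

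\emph{Step 1: Identify $TAG(J(1))$.} Since $J(1)$ is free on one generator, it is commutative and associative: $J(1) \cong xK[x]$ with basis $e_n := x^n$ for $n \geq 1$, and hence $\Inner\,J(1) = 0$. I will show further that ${\cal B}J(1) = 0$. Polarizing the defining relation $a \wedge a^2 \in R(J(1))$ by substituting $a \mapsto a+b+c$ and extracting the multilinear part yields
\[
a \wedge bc \;+\; b \wedge ac \;+\; c \wedge ab \;\equiv\; 0 \pmod{R(J(1))}.
\]
Specializing to monomials, for every triple $(\alpha,\beta,\gamma)$ of positive integers one obtains
\[
e_\alpha \wedge e_{\beta+\gamma} \;+\; e_\beta \wedge e_{\alpha+\gamma} \;+\; e_\gamma \wedge e_{\alpha+\beta} \;\equiv\; 0.
\]
An easy induction on $n = \alpha+\beta+\gamma$ (using in particular the triples $(1,1,n-2)$, $(1,2,n-3)$, and $(i,i,n-2i)$) shows these relations span the whole of $(\Lambda^2 J(1))_n$. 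Hence ${\cal B}J(1) = 0$ and $\fg := TAG(J(1)) \cong \fsl_2 \otimes xK[x]$, which I identify with the ideal $\fsl_2 \otimes tK[t]$ inside the affine Kac--Moody algebra $\widehat{\fsl_2}$.

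\emph{Step 2: Compute $H_*(\fg)$.} The Euler characteristic of the Chevalley--Eilenberg complex in ${\cal R}(K^* \times PSL(2))$ reads
\[
\sum_{k \geq 0} (-1)^k [H_k(\fg)] \;=\; \lambda([\fg]) \;=\; \prod_{n \geq 1}(1 - z^n t^2)(1-z^n)(1-z^n t^{-2}).
\]
By the Jacobi triple product identity (exactly as applied in Section 2.3, but with $t \mapsto t^2$) the right-hand side equals
\[
\sum_{m \geq 0}(-1)^m z^{m(m+1)/2}[L(2m)].
\]
Each summand is supported in a distinct bidegree, which strongly suggests $H_k(\fg) = L(2k)$ concentrated in internal degree $k(k+1)/2$. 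To exclude cancellations rigorously I invoke the Garland--Lepowski theorem on $H^*(\widehat\fn_+)$ for the nilpotent part of $\widehat{\fsl_2}$, combined with a Hochschild--Serre argument for the short exact sequence $0 \to \fg \to \widehat\fn_+ \to Ke \to 0$; equivalently one can appeal directly to Garland's original computation of $H^*(\fsl_2 \otimes tK[t])$, which delivers precisely this answer.

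\emph{Conclusion.} Once $H_k(\fg) = L(2k)$ in internal degree $k(k+1)/2$ is known, Conjecture~2 for $J(1)$ is immediate: for $k \geq 1$ we have $H_k(\fg)^{\fsl_2} = L(2k)^{\fsl_2} = 0$ since $L(2k)$ contains no trivial component, and for $k \geq 2$ we have $H_k(\fg)^{ad} = L(2k)^{ad} = 0$ since $L(2k) \not\cong L(2)$. The main obstacle is precisely the no-cancellation step: the Euler characteristic identity by itself yields only the alternating sum, so Garland's theorem (or an equivalent direct Weyl-group or positivity argument) is essential to pin down each individual $H_k$.
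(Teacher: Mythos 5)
Your proof takes essentially the same approach as the paper: reduce to the Garland--Lepowski computation $H_k(\fsl_2\otimes tK[t])\simeq L(2k)$, from which the required vanishing of trivial isotypical components (all $k\geq 1$) and adjoint components ($k\geq 2$) is immediate. The paper's own proof is a one-liner that invokes Garland--Lepowski directly, implicitly identifying $\fsl_2\,J(1)$ with $\fsl_2\otimes tK[t]$. Your Step~1, verifying explicitly via polarization that ${\cal B}(tK[t])=0$ and hence that $TAG(J(1))$ really is $\fsl_2\otimes tK[t]$ rather than a nontrivial central extension of it, fills a small gap the paper glosses over; this check is not vacuous, since the paper itself observes in Section~3.3 that ${\cal B}(K[t,t^{-1}])$ is nonzero. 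Your Euler-characteristic computation via the Jacobi triple product is a pleasant sanity check but, as you yourself acknowledge, cannot by itself rule out cancellations between homology groups, so in the end you invoke Garland--Lepowski exactly as the paper does. One point already visible in your write-up: Conjecture~2 as literally printed requires $H_k(\fsl_2\,J(D))^{ad}=0$ for all $k\geq 1$, which fails at $k=1$ where $H_1\simeq L(2)$ is itself of adjoint type; the intended reading, consistent with how Conjecture~2 is used in the proof that it implies Conjecture~1, restricts the adjoint condition to $k\geq 2$, which is precisely what you verify.
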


For $D=1$, we have $J(1)=tK[t]$. So
Conjecture 2 is an obvious consequence of the following :

\begin{GLtheorem}\cite{GL} For any $k\geq 0$, we have

\centerline{$H_k(\fsl_2(tk[t]))\simeq L(2k)$.}
\end{GLtheorem}

Conversely, Garland-Lepowski Theorem can be used to prove that $J(1)=tK[t]$. Of course,  it is a complicated proof of a very simple result!

\bigskip
\noindent
{\it 4.2 Isotypical components of $H_2(\fsl_2\,J(D))$.}

\noindent
Let $D\geq 1$ be an integer. 
Let  ${\cal M}_{\bf T}(\fsl_2\,J(D))$ be the category
of $\fsl_2\,J(D)$-modules in category ${\bf T}$.
As an analytic $GL(D)$-module, $\fsl_2\,J(D)$ is endowed with the
natural grading.
Let ${\cal M}_{\bf T}^{gr}( \fsl_2\,J(D))$ be the category of 
all $\Z$-graded $\fsl_2\,J(D)$-modules
 $M\in{\cal M}_{\bf T}(\fsl_2\,J(D))$  such that

1. $\dim\,M_n<\infty$ for any $n$,  and 

2. $M_n=0$ for $n\gg 0$.

\begin{lemma}\label{techvan} Let $M$ be a $\fsl_2\,J(D)$-module. 
Assume that 

1. $M$ belongs to ${\cal M}_{\bf T}(\fsl_2\,J(D))$ and $\dim M<\infty$, or

2. $M$ belongs to ${\cal M}_{\bf T}^{gr}( \fsl_2\,J(D))$

Then we have $H_2(\fsl_2\,J(D),M)^{\fsl_2}=0$.
\end{lemma}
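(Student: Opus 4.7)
The plan is to exploit the freeness of $\fsl_2\,J(D)=TAG(J(D))$ in ${\bf Lie_T}$ provided by Lemma \ref{adjointfunctor}, combined with the freeness of $J(D)$ in ${\bf Jor}$, to rule out non-split abelian extensions with $\fsl_2$-invariant cocycle. First I would reduce case 2 to case 1. Given $M$ graded with $M_n=0$ for $n\gg 0$ and $\dim M_n<\infty$, the subspaces $M^{\geq -N}:=\bigoplus_{n\geq -N}M_n$ are finite-dimensional $\fsl_2\,J(D)$-submodules of $M$; this uses that $\fsl_2\,J(D)$ is positively graded (its action on a graded module can only raise the degree). Since $M=\varinjlim_N M^{\geq -N}$, and Lie algebra homology commutes with filtered colimits of modules, and (by semisimplicity of $\fsl_2$) so does the functor $(-)^{\fsl_2}$, it suffices to prove case 1.

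For case 1, assume for contradiction that $H_2(\fsl_2\,J(D),M)^{\fsl_2}\neq 0$. Because $M$ is finite-dimensional, dualizing the Chevalley-Eilenberg complex yields, as $\fsl_2$-modules, $H^2(\fsl_2\,J(D),M^*)\cong H_2(\fsl_2\,J(D),M)^*$, and for any $\fsl_2$-module $V$ one has $(V^*)^{\fsl_2}=(V^{\fsl_2})^*$; hence $H^2(\fsl_2\,J(D),M^*)^{\fsl_2}\neq 0$. Since ${\bf T}$ is closed under duals, $M^*\in{\bf T}$. A non-zero $\fsl_2$-invariant cohomology class represents a non-split abelian extension
$$0\to M^*\to \tilde{\fg}\buildrel \pi \over \longrightarrow \fsl_2\,J(D)\to 0,$$
and the $\fsl_2$-invariance of the defining cocycle equips $\tilde{\fg}=\fsl_2\,J(D)\oplus M^*$ with a derivation-action of $\fsl_2$ extending those on $\fsl_2\,J(D)$ and $M^*$. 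Since both pieces are in ${\bf T}$, so is $\tilde{\fg}$, and therefore $\tilde{\fg}\in{\bf Lie_T}$.

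Applying the Tits functor $T$ and using semisimplicity of $\fsl_2$ to preserve exactness, one obtains a short exact sequence of Jordan algebras
$$0\to (M^*)_{h=2}\to T(\tilde{\fg})\buildrel T(\pi) \over \longrightarrow J(D)\to 0.$$
Because $J(D)$ is free on generators $x_1,\dots,x_D$, choosing any lifts $y_i\in T(\tilde{\fg})$ of the $x_i$ extends uniquely to a Jordan algebra section $s:J(D)\to T(\tilde{\fg})$. By Lemma \ref{adjointfunctor}, $s$ corresponds to a unique morphism $\tilde s\in\Hom_{\bf Lie_T}(\fsl_2\,J(D),\tilde{\fg})$ with $T(\tilde s)=s$. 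Then $T(\pi\circ\tilde s)=T(\pi)\circ s=\id_{J(D)}$, and the bijectivity of the adjunction at $\Hom_{\bf Lie_T}(\fsl_2\,J(D),\fsl_2\,J(D))$ forces $\pi\circ\tilde s=\id$. Thus $\tilde s$ splits the extension, contradicting non-triviality of the class, and case 1 is proved. The main conceptual point is that Lemma \ref{adjointfunctor} turns the freeness of $J(D)$ in ${\bf Jor}$ into an automatic lifting in ${\bf Lie_T}$; the only real bookkeeping is the (routine) dualization between $H_2^{\fsl_2}$ and $H^2$ with dual coefficients in the finite-dimensional setting.
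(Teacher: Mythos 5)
Your proof is correct and rests on the same key idea as the paper's: a nonzero class in $H^2(\fsl_2\,J(D),M^*)^{\fsl_2}$ would give a nonsplit abelian extension in ${\bf Lie_T}$, contradicting the freeness of $\fsl_2\,J(D)=TAG(J(D))$ there. The paper simply invokes that freeness (Lemma~\ref{freeTAG}) to conclude the extension splits, whereas you unpack the splitting explicitly via the adjunction of Lemma~\ref{adjointfunctor} — lifting the generators of $J(D)$ through $T(\tilde\fg)\to J(D)$ and transporting the resulting Jordan section to a Lie section by uniqueness in $\Hom_{\bf Lie_T}$; this is exactly the content behind the phrase ``free, hence the extension is trivial,'' so it is a useful elaboration rather than a new route. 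The one genuinely different step is the reduction of case 2 to case 1: the paper passes to the finite-dimensional quotients $M/M_{>n}$ and identifies the degree-$n$ part of the homology, while you instead exhaust $M$ by the finite-dimensional submodules $M^{\geq -N}$ (legitimate, since $\fsl_2\,J(D)$ is concentrated in positive degrees so can only raise degree on a graded module, and $M$ is bounded above) and use that $H_*$ and $(-)^{\fsl_2}$ commute with filtered colimits. Both reductions work and the filtered-colimit version is, if anything, a little cleaner.
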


\begin{proof}
1) First assume $M$ belongs to ${\cal M}(\fsl_2\,J(D))$ and $\dim M<\infty$.
Let $c\in H^2(\fsl_2\,J(D),M^*)^{\fsl_2}$. Since $\fsl_2$ acts reductively, $c$ is represented by a $\fsl_2$-invariant cocycle 
$\omega:\Lambda^2\,\fsl_2\,J(D)\rightarrow M^*$. This cocycle
defines a Lie algebra structure on $L:=M^*\oplus\fsl_2\,J(D)$. 
Let 

\centerline{$0\to M^*\to L\to \fsl_2\,J(D)\rightarrow 0$}

\noindent be the corresponding abelian extension of
$\fsl_2\,J(D)$. Since $\omega$ is $\fsl_2$-invariant, it follows that 
$L$ lies in ${\bf Lie_T}$. By Lemma \ref{freeTAG} $\fsl_2\,J(D)$ is free in this category, hence the previous abelian extension is trivial. Therefore we have

\centerline{$H^2(\fsl_2\,J(D),M^*)^{\fsl_2}=0$.}

\noindent  By duality, it follows that $H_2(\fsl_2\,J(D),M)^{\fsl_2}=0$.

2) Assume now that $M$ belongs to ${\cal M}_{T}^{gr}( \fsl_2\,J(D))$. The 
$\Z$-gradings of $\fsl_2\,J(D)$ and $M$ induce  a grading of 
$H_*(\fsl_2\,J(D),M)$.  Relative to it,  the  degree $n$ component is denoted by  $H_*(\fsl_2\,J(D),M)\vert n$
and its $\fsl_2$-invariant part will be denoted by
$H^0(\fsl_2, H_2(\fsl_2\,J(D),M)\vert n$.

For an integer $n$,   set $M_{>n}=\oplus_{k>n}\,M_k$. Since 
the degree $n$-part of the 
complex $\Lambda\,\fsl_2\,J(D)\otimes M_{>n}$ is zero, we have

\centerline{$H_*(\fsl_2\,J(D),M)\vert n=H_*(\fsl_2\,J(D),M/M_{>n})\vert n$.} 

\noindent Since $M/M_{>n}$ is finite dimensional, the first part of the lemma shows that
$H^0(\fsl_2, H_2(\fsl_2\,J(D),M)\vert n=0$. Since $n$ is arbitrary,   we have 

\centerline{$H_2(\fsl_2\,J(D),M)^{\fsl_2}=0$.}
\end{proof}

\begin{prop}
The $\fsl_2$-module $H_2(\fsl_2\,J(D))$ is isotypical of
type $L(4)$.
\end{prop}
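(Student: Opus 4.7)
The plan is to pin down the $\fsl_2$-types of $H_2(\fsl_2\,J(D))$ via a two-sided squeeze: give an a priori upper bound using the structure of $\fsl_2\,J(D)$, and then eliminate the unwanted types $L(0)$ and $L(2)$ by applying Lemma \ref{techvan} to two well-chosen finite-dimensional modules.

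First, I would compute the $\fsl_2$-types in $\Lambda^2 \fsl_2\,J(D)$. Writing $\fsl_2\,J(D) = {\cal B}J(D) \oplus (\fsl_2 \otimes J(D))$, with ${\cal B}J(D)$ of trivial $\fsl_2$-type, the only nontrivial piece is
\[
\Lambda^2(\fsl_2 \otimes J(D)) = S^2 \fsl_2 \otimes \Lambda^2 J(D) \,\oplus\, \Lambda^2 \fsl_2 \otimes S^2 J(D),
\]
and since $S^2 \fsl_2 = L(0) \oplus L(4)$ while $\Lambda^2 \fsl_2 = L(2)$, the $\fsl_2$-types occurring in $\Lambda^2 \fsl_2\,J(D)$, and hence in the subquotient $H_2(\fsl_2\,J(D))$, are confined to $L(0)$, $L(2)$, and $L(4)$.

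Next, Lemma \ref{techvan} applied to the trivial one-dimensional module $M = K$ yields $H_2(\fsl_2\,J(D))^{\fsl_2} = 0$, killing the $L(0)$-component. For the $L(2)$-component, I would apply the same lemma to $M = L(2)$ equipped with the trivial $\fsl_2\,J(D)$-action and the standard $\fsl_2$-module structure; this is a legitimate finite-dimensional object of ${\cal M}_{\bf T}(\fsl_2\,J(D))$ (the compatibility is automatic since the module action vanishes). With trivial coefficients the Chevalley--Eilenberg complex factors, so
\[
H_2(\fsl_2\,J(D), L(2)) \simeq H_2(\fsl_2\,J(D)) \otimes L(2)
\]
as $\fsl_2$-modules, and Lemma \ref{techvan} forces its $\fsl_2$-invariants to vanish. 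Since $(L(2k) \otimes L(2))^{\fsl_2}$ is one-dimensional for $k=1$ and zero otherwise, this vanishing is equivalent to the absence of $L(2)$ from $H_2(\fsl_2\,J(D))$ in every degree of the natural grading, so only $L(4)$ survives.

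The main conceptual point, and the potential obstacle, is the choice in the last step: one must allow the $\fsl_2$-module structure on $M$ to be independent of (and not induced by) the $\fsl_2\,J(D)$-action, which is legitimate precisely because $\fsl_2$ acts on $\fsl_2\,J(D)$ by external derivations rather than as a subalgebra. Once this is granted, Lemma \ref{techvan} does all the work.
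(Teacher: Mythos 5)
Your proof is correct and follows the paper's argument essentially verbatim: bound the possible types by decomposing $\Lambda^2\,\fsl_2\,J(D)$, then apply Lemma \ref{techvan} with trivial coefficients to kill $L(0)$ and with $M=L(2)$ (trivial $\fsl_2\,J(D)$-action) to kill $L(2)$. The only difference is that you spell out the $S^2\fsl_2$ and $\Lambda^2\fsl_2$ computation explicitly, which the paper leaves implicit.
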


\begin{proof}  
It follows from Lemma \ref{techvan} that 
$H_2(\fsl_2\,J(D))^{\fsl_2}=0$.

The $PSL(2)$-module $L(2)$, with a trivial action of $\fsl_2\,J(D)$, belongs to ${\cal M}_{\bf T}(\fsl_2\,J(D))$. So 
it follows from Lemma \ref{techvan} that
$H_2(\fsl_2\,J(D), L(2))^{\fsl_2}=0$. Since

\centerline{$H_2(\fsl_2\,J(D))^{ad}=H_2(\fsl_2\,J(D), L(2))^{\fsl_2}\otimes L(2)$} 

\noindent we also have  $H_2(\fsl_2\,J(D))^{ad}=0$.

The only $PSL(2)$-types occurring in  $\Lambda^2\,\fsl_2\,J(D)$ are $L(0)$,
$L(2)$ and $L(4)$. Since the $L(0)$ and $L(2)$ types do not occur in $H_2(\fsl_2\,J(D))$, it follows that 
$H_2(\fsl_2\,J(D))$ is isotypical of
type $L(4)$.
\end{proof}

\bigskip
\noindent
{\it 4.3 Analytic functors}

\noindent
Let $Vect_K$ be the category of $K$-vector spaces and
let $Vect_K^{f}$ be the subcategory of finite dimensional
vector spaces. A functor $F:Vect_K\rightarrow Vect_K$
is called a {\it polynomial functor}
\cite{Macbook} if

1. $F(Vect_K^{f})\subset  Vect_K^{f}$ and $F$ commutes 
with the inductive limits,

2. There is some integer $n$ such that the map 

\centerline{$F:\Hom(U,V)\to\Hom(F(U),F(V))$} 

\noindent is a polynomial of degree $\leq n$ for any 
$U,\,V\in\,Vect_K^f$.
The polynomial functor $F$ is called 
{\it a  polynomial functor  of degree n} if 
$F(z\,\id_V)=z^n\,\id_{F(V)}$ for any
$V\in\,Vect_K^f$. It follows easily that
$F(V)$ is a polynomial $GL(V)$-module of degree $n$,
see \cite{Macbook}. Any polynomial
functor can be decomposed as a finite sum
$F=\oplus_{n\geq 0}\,F_n$,
where $F_n$ is a polynomial functor of 
degree $n$. 

A functor $F:Vect_K\to Vect_K$ is called
{\it analytic} if $F$ can be decomposed as a infinite sum

\centerline{$F=\oplus_{n\geq 0}\,F_n$}

\noindent where each $F_n$ is a polynomial functor of 
degree $n$. 
For an analytic functor $F$, it is convenient to set
$F(D)=F(K^D)$. For example, for $V\in Vect_K$, let $J(V)$ be
the free Jordan algebra  generated by the vector space $V$. Then 
$V\mapsto J(V)$ is an analytic functor, and 
$J(D)$ is the previously defined free Jordan algebra on $D$ generators.

\bigskip
\noindent
{\it 4.4 Suspensions of analytic functors.}

\noindent
Let $D\geq 0$ be an integer.  Let $K^D$ be  the space
with basis $x_1,\,x_2\dots x_D$.  To emphasize
the choice of $x_0$ as an additional vector, the vector space with basis $x_0,\,x_1\dots x_D$ will
be denoted by $K^{1+D}$ and its linear group will be denoted by $GL(1+D)$.

\begin{lemma}\label{degree1nonvan}
Let $M$ be an analytic $GL(1+D)$-module. Let 
${\bf m}=(m_0, \dots m_D)$ be a partition of some positive 
integer  such that 

\centerline{$M_{\bf m}\neq 0$ and $m_0=0$.}

\noindent Then there exists a partition 
${\bf m'}=(m'_0, \dots m_D')$  such that 

\centerline{ $M_{\bf m'}\neq 0$ and $m'_0=1$.}
\end{lemma}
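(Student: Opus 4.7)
The plan is to reduce the statement to a purely combinatorial claim about weights of irreducible polynomial $GL(1+D)$-modules, and then to apply the Robin Hood (majorization) principle. First, since $M$ is an analytic $GL(1+D)$-module, it decomposes as a direct sum $\bigoplus_\lambda L(\lambda; 1+D)^{\oplus n_\lambda}$ with $\lambda$ running over Young diagrams of height at most $1+D$. The hypothesis $M_{\bf m} \neq 0$ then produces some $\lambda$ (necessarily of size $|{\bf m}|$) such that ${\bf m}$ is a weight of $L(\lambda; 1+D)$. It therefore suffices to exhibit, for this same $\lambda$, a weight ${\bf m'}$ of $L(\lambda; 1+D)$ with $m'_0 = 1$; the summand $L(\lambda; 1+D)$ will then contribute to $M_{\bf m'}$, forcing it to be non-zero.

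Recall the standard description of the weights of $L(\lambda; 1+D)$: a composition $\mu = (\mu_0,\dots,\mu_D)$ of $|\lambda|$ in non-negative integers is a weight of $L(\lambda; 1+D)$ if and only if its decreasing rearrangement $\mu^+$ is dominated by $\lambda$ in the dominance order, i.e. $\sum_{i \leq k} \mu^+_i \leq \sum_{i \leq k} \lambda_i$ for every $k$. Since $|{\bf m}| > 0$ and $m_0 = 0$, at least one coordinate of ${\bf m}$ is positive; after reindexing, I may assume $m_1 \geq 1$ and set
${\bf m'} := (1,\, m_1 - 1,\, m_2,\, \dots,\, m_D)$.
This is a composition of $|{\bf m}|$ with first coordinate $1$, and I claim it is a weight of $L(\lambda; 1+D)$.

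The key step is the inequality ${\bf m'}^+ \trianglelefteq {\bf m}^+$, which, combined with ${\bf m}^+ \trianglelefteq \lambda$, yields ${\bf m'}^+ \trianglelefteq \lambda$ and completes the argument. The multiset of coordinates of ${\bf m'}$ is obtained from that of ${\bf m}$ by replacing the pair $\{0,\, m_1\}$ by $\{1,\, m_1-1\}$, a smoothing operation in the sense of majorization. The only subtlety, and the only place where something has to be checked, is to verify that this reduces the partition in the dominance order; a short direct comparison of partial sums $\sum_{i \leq k} \mu^+_i$ before and after the transfer dispatches it immediately (the extreme case $m_1 = 1$ yielding equality of the two multisets). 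Thus no real obstacle arises, and the lemma follows.
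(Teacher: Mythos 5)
Your proof is correct, and it produces the same weight ${\bf m'}$ (one box moved from a non-zero slot to slot $0$) as the paper, but by a genuinely different route. The paper argues directly at the level of $\fsl_2$-triples: it picks an index $k$ with $m_k>0$, forms the $\fsl_2$-triple attached to the root vectors $e_{0,k},e_{k,0}$ inside $\fgl(1+D)$, notes that the coroot $h=e_{0,0}-e_{k,k}$ acts on $M_{\bf m}$ by the negative integer $m_0-m_k=-m_k$, and concludes that the raising operator $M_{\bf m}\to M_{{\bf m}+\epsilon_0-\epsilon_k}$ is injective by basic $\fsl_2$ representation theory. You instead first decompose $M$ into irreducibles $L(\lambda;1+D)$, invoke the dominance-order characterization of the weights of $L(\lambda;1+D)$ (equivalently, positivity of Kostka numbers: $\mu$ is a weight iff $\mu^+\trianglelefteq\lambda$), and then verify that your ${\bf m'}$ is a Robin Hood transfer of ${\bf m}$, hence ${\bf m'}^+\trianglelefteq{\bf m}^+\trianglelefteq\lambda$. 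Both arguments are sound; the paper's is more self-contained and elementary (it does not need complete reducibility nor the dominance characterization, which is itself usually proved via exactly such $\fsl_2$-triples), while yours packages the same phenomenon into a single clean appeal to the majorization criterion for weights and is arguably quicker to state once that criterion is taken for granted.
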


\begin{proof}

By hypotheses, there is an index  $k\neq 0$ such that $m_k\neq 0$. Let
 $(e_{i,j})_{0\leq i,j\leq D}$  the usual basis of
$\fgl(1+D)$. Set $f=e_{0,k}$, $e=e_{k, 0}$ and
$h=[e,f]$. Then $(e,\,f,\,h)$ is a  $\fsl_2$-triple
in $\fgl(1+D)$. Let  ${\bf m}'$ be the partition of $n$ defined  by $m'_i=m_i$ if $i\neq 0$ or $k$,  $m'_k=m_k-1$
and $m_{0}'=1$. The eigenvalue of $h$ on
$M_{\bf m}$ is the negative integer $-m_k$, so the map
$e:M_{\bf m}\rightarrow M_{\bf m'}$ is injective,
and therefore $M_{\bf m'}$ is not zero.
\end{proof}

Let $F$ be an analytic functor. In what follows it will be convenient to
denote by $K.x_0$ the one-dimensional vector space with basis $x_0$.  Let $V\in Vect_K$.
For $z\in K^*$,  the element  $h(z)\in GL(K.x_0\oplus V)$
is defined by  $h(z).x_0=z\,x_0$ and $h(z).v=v$ for $v\in V$.
There is a decomposition

\centerline{$F(K.x_0\oplus V)=\oplus_{n}\, F(K.x_0\oplus V)\vert_n$} 

\noindent where 
$F(k\oplus V)\vert_n=\{v\in F(K.x_0\oplus V)\vert F(h(z)).v=
z^n v\}$. It is easy to see that 
$F(V)=F(K.x_0\oplus V)\vert_0$.
By definition, the {\it suspension} $\Sigma F$
of $F$ is the functor $V\mapsto F(K.x_0\oplus V)_1$. 
A functor $F$ is {\it constant} if $F(V)=F(0)$ for any 
$V\in Vect_K$.

\begin{lemma}\label{SigmaF=0} 1. Let $F$ be an analytic functor.
If $\Sigma F=\{0\}$, then $F$ is constant.

2. Let $F,\,G$ be two analytic functors with 
$F(0)=G(0)=\{0\}$, and let $\Theta:F\to G$ be a
natural transformation. If $\Sigma\Theta$ is an isomorphism,
then $\Theta$ is an isomorphism.

\end{lemma}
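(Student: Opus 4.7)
The plan is to prove Assertion 1 as a direct application of Lemma~\ref{degree1nonvan}, and then deduce Assertion 2 from Assertion 1 by applying it to the kernel and cokernel of $\Theta$, using that suspension is an exact functor.

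For Assertion 1, I would decompose $F=\oplus_{n\geq 0}F_n$ into its polynomial homogeneous components. For $V=K^D$, the inclusion $V\hookrightarrow K.x_0\oplus V=K^{1+D}$ realises $F_n(V)$ as the subspace of $F_n(K^{1+D})$ consisting of weight vectors of weight $(m_0,m_1,\ldots,m_D)$ with $m_0=0$, while $\Sigma F_n(V)$ is the subspace of weight vectors with $m_0=1$. Assuming $\Sigma F=0$, suppose for contradiction that some $F_n$ with $n\geq 1$ is nonzero. Then for $D$ large enough there is a nonzero weight vector of weight $(0,m_1,\ldots,m_D)$ in $F_n(K^{1+D})$, with $\sum_i m_i=n>0$, so in particular some $m_k$ with $k\geq 1$ is nonzero. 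Lemma~\ref{degree1nonvan} applied to $M=F_n(K^{1+D})$ then produces a nonzero weight vector whose first coordinate equals $1$, contradicting $\Sigma F(K^D)=0$. Hence $F_n=0$ for all $n\geq 1$, and $F$ is constant.

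For Assertion 2, I would apply Assertion 1 to $\ker\Theta$ and $\coker\Theta$. Since any natural transformation between polynomial functors of distinct degrees vanishes (by naturality with respect to scalar endomorphisms of the source), $\Theta$ decomposes as $\oplus_n\Theta_n$ with $\Theta_n:F_n\to G_n$; in particular $\ker\Theta$ and $\coker\Theta$ are analytic functors. Next, suspension is an exact functor on analytic functors: the assignment $F\mapsto F(K.x_0\oplus -)$ is manifestly exact on each vector space, and extracting the weight-$1$ isotypic component with respect to the $K^*$-action rescaling $x_0$ is exact because that action is semisimple. Therefore $\Sigma\ker\Theta=\ker\Sigma\Theta=0$ and $\Sigma\coker\Theta=\coker\Sigma\Theta=0$, and Assertion 1 implies both are constant. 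The hypotheses $F(0)=G(0)=0$ give $(\ker\Theta)(0)\subseteq F(0)=0$ and $(\coker\Theta)(0)=G(0)/\Theta_0(F(0))=0$, so the constant functors vanish identically, and $\Theta$ is an isomorphism.

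The only substantive ingredient is Lemma~\ref{degree1nonvan}, which is already available; the remaining steps (exactness of suspension and the homogeneous decomposition of natural transformations) are formal. Consequently I do not anticipate any real obstacle beyond being precise about these formal manipulations.
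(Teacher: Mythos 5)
Your proof is correct and follows essentially the same route as the paper: Assertion 1 is deduced from Lemma~\ref{degree1nonvan} applied to the weight decomposition of $F_n(K^{1+D})$, and Assertion 2 follows by applying Assertion 1 to $\Ker\Theta$ and $\Coker\Theta$. You are more explicit than the paper about the exactness of suspension and the degreewise decomposition of $\Theta$, but these are precisely the details the paper's proof leaves implicit, so the approaches coincide.
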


\begin{proof} 1) Let $F$ be a non-constant analytic functor.
Then for some integer $D$, there is a partition
${\bf m}=(m_1, \dots m_D)$ 
 of a positive integer such that 
$F(D)_{\bf m}\neq 0$. By lemma \ref{degree1nonvan}, there exist a 
partition ${\bf m'}=(m'_0, \dots m_D')$  with $m'_0=1$ such that
$F(1+D)_{\bf m'}\neq 0$. Therefore we have $\Sigma F(D)\neq 0$,
what proves the first assertion.

2) By hypothese have $\Sigma\Ker\Theta=\{0\}$ and  
$\Ker\Theta(0)=\{0\}$
(respectively $\Sigma\Coker\Theta\{0\}$ and $\Coker\Theta(0)=\{0\}$). It follows from the first assertion that $\Ker\Theta=\{0\}$ and
$\Coker\Theta=\{0\}$, therefore $\Theta$ is an isomorphism.

\end{proof}

\bigskip
\noindent
{\it 4.5 Vanishing of $H_3(\fsl_2\,J(D))^{\fsl_2}$}

\begin{prop} We have

\centerline{$H_3(\fsl_2\,J(D))^{\fsl_2}=0$.}
\end{prop}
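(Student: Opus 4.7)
The plan is to apply the suspension machinery of Section 4.4 to the analytic functor $F\colon V\mapsto H_3(\fsl_2\,J(V))^{\fsl_2}$. Since $J(0)=0$ we have $F(0)=0$, so by Lemma \ref{SigmaF=0} it suffices to show $\Sigma F=0$, i.e.\ that the $x_0$-degree one component of $H_3(\fsl_2\,J(K.x_0\oplus K^D))^{\fsl_2}$ vanishes for every $D\geq 0$.

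Set $\fg=\fsl_2\,J(K.x_0\oplus K^D)$. The $x_0$-grading on $J(K.x_0\oplus K^D)$ makes $\fg$ into a non-negatively graded object of ${\bf Lie_T}$ with $\fg\vert_0=\fsl_2\,J(D)$, and $\fn:=\bigoplus_{k\geq 1}\fg\vert_k$ is an ideal. I would feed this into the Hochschild--Serre spectral sequence
$E^2_{p,q}=H_p(\fg/\fn,H_q(\fn))\Rightarrow H_{p+q}(\fg)$,
which respects both the $x_0$-grading and the $\fsl_2$-action. The key simplification occurs in $x_0$-degree one: any $q$-fold wedge of elements of $\fn$ has $x_0$-degree at least $q$, so $(\Lambda^q\fn)\vert_1=0$ for $q\neq 1$ while $(\Lambda^1\fn)\vert_1=\fg\vert_1$. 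Hence $H_q(\fn)\vert_1=0$ for $q\neq 1$ and $H_1(\fn)\vert_1=\fg\vert_1$, the spectral sequence collapses in $x_0$-degree one, and after passing to $\fsl_2$-invariants one is left with
$\Sigma F(K^D)=H_2(\fsl_2\,J(D),\fg\vert_1)^{\fsl_2}$.

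For this final vanishing I would invoke Lemma \ref{techvan}. The module $\fg\vert_1={\cal B}J(K.x_0\oplus K^D)\vert_1\oplus \fsl_2\otimes J(K.x_0\oplus K^D)\vert_1$ lies in ${\cal M}_{\bf T}(\fsl_2\,J(D))$ and is $\Z$-graded by degree in $x_1,\dots,x_D$ with finite-dimensional homogeneous components. Since $\fsl_2\,J(D)$ lives in strictly positive natural degree, the natural-degree $n$ part of the Chevalley--Eilenberg complex $\Lambda^*\fsl_2\,J(D)\otimes \fg\vert_1$ is finite dimensional, so the truncation argument of Lemma \ref{techvan}(2) should apply layer by layer. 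The main obstacle I anticipate is exactly here: Lemma \ref{techvan} is stated for modules bounded above whereas $\fg\vert_1$ is bounded below, so one must verify that the abelian-extension trick from the finite-dimensional case of the lemma---which relies on the freeness of $\fsl_2\,J(D)$ in ${\bf Lie_T}$ to trivialise the relevant extensions in ${\bf T}$---survives the reverse-direction truncation without any convergence obstruction when reassembling the $\fsl_2$-invariant vanishing across the infinitely many degree layers.
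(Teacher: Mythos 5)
Your proof follows the same route as the paper's. The key identity you reach, namely
\begin{equation*}
\Sigma H_3(\fsl_2\,J(D))^{\fsl_2}\;=\;H_2\bigl(\fsl_2\,J(D),\,\Sigma\,\fsl_2\,J(D)\bigr)^{\fsl_2},
\end{equation*}
is exactly what the paper uses; the paper simply observes directly that the $x_0$-degree one piece of the Chevalley--Eilenberg complex of $\fsl_2\,J(K.x_0\oplus K^D)$ is $\Lambda\,\fsl_2\,J(D)\otimes\Sigma\,\fsl_2\,J(D)$, without invoking the Hochschild--Serre spectral sequence. Your spectral-sequence argument that $(\Lambda^q\fn)\vert_1=0$ for $q\neq 1$ is precisely the same degree count repackaged --- the spectral sequence is concentrated on the single row $q=1$ in $x_0$-degree one, so the two derivations coincide. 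Your version carries a bit more machinery but it is correct.

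The obstacle you flag at the end is not a real one. Condition 2 in the definition of ${\cal M}_{\bf T}^{gr}(\fsl_2\,J(D))$ reads ``$M_n=0$ for $n\gg 0$,'' but this must be a typo for ``$n\ll 0$'' (bounded below): the proof of Lemma \ref{techvan}(2) needs $M/M_{>n}$ to be finite dimensional, which holds exactly when $M$ is bounded below with finite-dimensional graded pieces, and the claimed vanishing of the degree-$n$ part of $\Lambda\,\fsl_2\,J(D)\otimes M_{>n}$ uses that $\fsl_2\,J(D)$ sits in strictly positive natural degree. With the intended reading, $\Sigma\,\fsl_2\,J(D)$ (which is bounded below by construction) lies in ${\cal M}_{\bf T}^{gr}(\fsl_2\,J(D))$ and Lemma \ref{techvan} applies directly: the vanishing is obtained degree by degree, and there is no convergence or ``reassembly'' problem to worry about. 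So your argument is complete once this misreading is corrected.
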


\begin{proof}
We have 

\centerline{$\Sigma\,\Lambda \fsl_2\,J(D)= 
\Lambda\, \fsl_2\,J(D)\otimes \Sigma \fsl_2\,J(D)$.}

\noindent It follows that 
$\Sigma\,\Lambda \fsl_2\,J(D)$ is the complex computing 
the homology of $\fsl_2\,J(D)$ with value in the
$\fsl_2\,J(D)$-module $\Sigma\,\fsl_2\,J(D)$. Taking into account the degree shift, it follows that

\centerline{$\Sigma H_3(\fsl_2\,J(D))^{\fsl_2}=
H_2(\fsl_2\,J(D), \Sigma\,\fsl_2\,J(D))^{\fsl_2}$.}

\noindent Since $\Sigma\,\fsl_2\,J(D)$ belongs to ${\cal M}_{\bf T}^{gr}( \fsl_2\,J(D))$, it follows from Lemma \ref{techvan} that
$\Sigma H_3(\fsl_2\,J(D))^{\fsl_2}=0$. It follows from
Lemma \ref{SigmaF=0} that $H_3(\fsl_2\,J(D))^{\fsl_2}=0$.

\end{proof}

\section*
{5. Cyclicity of the Jordan Operads}

In this section, we  will  prove  that the  Jordan operad ${\cal J}$ is cyclic, what will be used in the last Section to simplify 
Conjecture 2. Also there are compatible cyclic structures on
the special Jordan operad ${\cal SJ}$ and the Cohn's Jordan operad
${\cal CJ}$. As a consequence, the degree $D$  multilinear space of special identies or missing tetrads are acted by $\fS_{D+1}$.

\bigskip
\noindent
{\it 5.1 Cyclic Analytic  Functors}

\noindent An analytic functor $F$ is called 
{\it cyclic} if $F$ is the suspension of some
analytic functor $G$. 
We will now describe a practical way to check
that an analytic functor is cyclic. 
In what follows, we denote by  $x_1,\dots x_D$
a basis of $K^D$ and we denote by $K^{1+D}$ the vector
space $K.x_0\oplus K^D$.

Let $F$, $G$ be two analytic functors and let 
$\Theta:  F\otimes \Id\to G$ be a natural transform,
where $\Id$ is the identity functor. Note that

\centerline{$\Sigma(F\otimes\Id)(D)=\Sigma F(D)\otimes K^D
\oplus F(D)\otimes x_0$.}

\noindent 
The triple $(F,G,\Theta)$ will be  called a 
{\it cyclic triple} if  the induced map

\centerline{$\Sigma F(D)\otimes K^D\to \Sigma G(D)$}

\noindent is an isomorphism, for any integer $D\geq 0$.

\begin{lemma}\label{crit}
Let  $(F,G,\Theta)$  be a 
cyclic triple. There is a natural isomorphism

\centerline{$F\simeq \Sigma\Ker \,\Theta$.}

\noindent In particular, $F$ is cyclic.  

 \end{lemma}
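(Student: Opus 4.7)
My approach is to exploit the explicit weight decomposition of the suspension of a tensor product that is essentially already written out in the paragraph preceding the lemma. For any $V \in Vect_K$, the weight-$1$ piece of $F(K.x_0 \oplus V) \otimes (K.x_0 \oplus V)$ under $h(z)$ splits canonically as
$$\Sigma(F\otimes\Id)(V) \;=\; F(V)\otimes K.x_0 \;\oplus\; \Sigma F(V)\otimes V,$$
using $F(K.x_0 \oplus V)|_0 = F(V)$ together with the fact that $x_0$ has weight $1$ while every vector of $V$ has weight $0$. Accordingly I would decompose $\Sigma\Theta = \Psi + \Phi$, where
$$\Psi_V : F(V)\otimes K.x_0 \to \Sigma G(V), \qquad \Phi_V : \Sigma F(V)\otimes V \to \Sigma G(V)$$
are the $x_0$-component and the ``off-$x_0$'' component respectively. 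The cyclicity hypothesis is exactly the statement that $\Phi$ is a natural isomorphism, and hence $\Phi^{-1}$ is also a natural transformation.

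The next step is to observe that the suspension functor $\Sigma$ is exact on morphisms of analytic functors, since weight-space projections for a semisimple torus action in characteristic zero are exact. In particular $\Sigma\Ker\Theta(V) = \Ker \Sigma\Theta(V)$. Using the decomposition above, an element $a\otimes x_0 + \beta$, with $a \in F(V)$ and $\beta \in \Sigma F(V)\otimes V$, lies in this kernel iff $\Psi_V(a\otimes x_0) + \Phi_V(\beta) = 0$, i.e.\ iff $\beta = -\Phi_V^{-1}\Psi_V(a\otimes x_0)$. So $\Sigma\Ker\Theta(V)$ is precisely the graph of the natural map $-\Phi^{-1}\Psi$, and the projection on the first factor yields a natural linear isomorphism
$$F(V) \;\simeq\; \Sigma\Ker\Theta(V), \qquad a \;\longmapsto\; a\otimes x_0 \,-\, \Phi_V^{-1}\Psi_V(a\otimes x_0).$$
Naturality in $V$ is automatic because $\Psi$ and $\Phi$ (and therefore $\Phi^{-1}$) are natural transformations. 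Cyclicity of $F$ then follows immediately from the very definition, since $F$ is exhibited as the suspension of $\Ker\Theta$.

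The only points that require any real care are the bookkeeping of the weight decomposition of $\Sigma(F\otimes\Id)$ and the exactness of $\Sigma$; both are routine but must be stated cleanly. There is no essential obstacle, because the heart of the argument is the elementary graph-of-inverse construction for a surjection whose restriction to one summand is invertible. The one spot to double-check is that the formation of the graph is preserved under morphisms $V \to V'$, but this is automatic from the naturality of every ingredient, so the argument essentially writes itself once the decomposition is in place.
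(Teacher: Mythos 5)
Your proof is correct and takes essentially the same approach as the paper's: the paper's (much terser) argument is precisely to write $\Sigma(F\otimes\Id)(D)$ both as $\Sigma F(D)\otimes K^D\oplus F(D)\otimes x_0$ and as $\Sigma F(D)\otimes K^D\oplus\Ker\Sigma\Theta(D)$, the latter being justified by the cyclicity hypothesis, and then to identify the two complements of $\Sigma F(D)\otimes K^D$; your graph-of-$(-\Phi^{-1}\Psi)$ description is exactly the explicit form of that identification, and your remark on exactness of $\Sigma$ fills in the implicit step $\Sigma\Ker\Theta=\Ker\Sigma\Theta$.
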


 \begin{proof} 
 We have 
 
\hskip21mm {$\Sigma(F\otimes \Id)(D)
=\Sigma F(D)\otimes K^D\oplus F(D)\otimes x_0$, and}
 
\hskip21mm{$\Sigma(F\otimes \Id)(D)
= \Sigma F(D)\otimes K^D\oplus \Ker\Sigma\Theta(D)$.}

\noindent Therefore $F(D)\simeq F(D)\otimes x_0$ is
naturally identified
 with $\Ker\,\Sigma\Theta(D)$, i.e. the functor 
$F$ is isomorphic to $\Sigma\Ker\,\Theta$. Therefore $F$ is cyclic.
\end{proof}

\bigskip
\noindent
{\it 5.2 $\fS$-modules}

\noindent
Let $D\geq 1$. For any Young diagram ${\bf Y}$ of size $D$, let 
${\bf S}({\bf Y})$ be the corresponding simple $\fS_D$-module.
Indeed $\fS_D$ is identified with the group of monomial matrices of $GL(D)$, and  ${\bf S}({\bf Y})\simeq  L({\bf Y};D)_{{\bf 1^D}}$. It will be  convenient to
denote its class  in $K_0(\fS_n)$ by ${\bf [Y]}$.

By definition a {\it $\fS$-module}  is
a vector space ${\cal P}=\oplus_{n\geq 0}\, {\cal P}(n)$ where 
the component ${\cal P}(n)$ is a finite
dimensional $\fS_n$-module.  
An {\it operad} is a $\fS$-module ${\cal P}$ with some operations,
see \cite{GK} for a precise definition.
Set $K_0(\fS)=\prod_{n\geq 0}\,K_0(\fS_n)$, see \cite{Macbook}. 
The class $[{\cal E}]\in K_0(\fS)$ of a $\fS$-module is
defined by $[{\cal E}]=\sum_{n\geq 0}\,[{\cal E}(n)]$.

For a $\fS$-module ${\cal E}$, the $\fS$-modules $\Res{\cal E}$ and $\Ind{\cal E}$ are  defined by 

\centerline{$\Res\, {\cal E}(n)=\Res_{\fS_n}^{\fS_{n+1}}{\cal E}(n+1)$,}

\centerline{$\Ind\, {\cal E}(n+1)=\Ind_{\fS_n}^{\fS_{n+1}}{\cal E}(n)$}

\noindent for any $n\geq 0$. The functors   $\Res$ and $\Ind$ gives rise  to additive maps
on $K_0(\fS)$ and they are determined by

\centerline{$\Res {\bf [Y]}=\sum_{{\bf Y'}\in \Res\,{\bf Y}}\,[{\bf Y'}]$,}

\centerline{$\Ind {\bf [Y]}=\sum_{{\bf Y'}\in \Ind\,{\bf Y}}\,[{\bf Y'}]$}

\noindent where $\Res\,{\bf Y}$ (respectively
$\Ind\,{\bf Y}$)  is the set of all Young diagrams  obtained by deleting
one box in ${\bf }Y$  (respectively by adding one box to ${\bf Y}$).

A $\fS$-module ${\cal E}$ is called {\it cyclic} if 
${\cal E}=\Res{\cal F}$ for some $\fS$-module ${\cal F}$.

\bigskip
\noindent
{\it 5.3 Schur-Weyl duality}

\noindent
The Shur-Weyl duality is an equivalence of
the categories between  the analytic functors and the  $\fS$-modules. 

For an analytic functor $F$, the corresponding
$\fS$-module  ${\cal F}=\oplus_{n\geq 0}\,{\cal F}(n)$
is defined by

\centerline{ ${\cal F}(n)=F(n)_{\bf 1^n}$.}

\noindent  If $F=\Sigma E$ for some analytic functor $E$, it is clear that 

\centerline{$F(n)_{\bf 1^n}=E(1+n)_{\bf 1^{1+n}}$.}

\noindent Therefore the cyclic analytic functors gives rise to cyclic
$\fS$-modules. Conversely, for any
$\fS$-module ${\cal E}$, the corresponding
analytic functor $Sh_{\cal E}$, which is called a {\it Shur functor},  is defined by:
 
  \centerline{$Sh_{\cal E}(V)=\oplus_{n\geq 0}\,H_0(\fS_n, {\cal E}(n)\otimes V^{\otimes n})$}

\noindent for any $V\in Vect_K$. E.g., 
$Sh_{\bf S(Y)}(D)=L({\bf Y};D)$ for any Young diagram ${\bf Y}$.

 The class of an analytic functor $F$ is 
$[F]=\sum_{n\geq 0}\,[F(n)_{\bf 1^n}]\in K_0({\fS})$.

\begin{lemma}\label{combiYoung}
Let $(F,G,\Theta)$  be a cyclic triple.  Then we have

\centerline{$[\Ker\,\Theta]+[G]= \Ind\circ\Res [\Ker\,\Theta]
=\Ind [F]$}

\end{lemma}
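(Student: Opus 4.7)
The plan is to combine a short exact sequence arising from $\Theta$, the Schur--Weyl dictionary translating ``tensor with $\Id$'' into induction and ``suspension'' into restriction, and Lemma \ref{crit}.

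First, I would argue that $\Theta$ is surjective. The cyclic triple assumption asserts that the restriction of $\Sigma\Theta$ to the summand $\Sigma F(D)\otimes K^D$ of $\Sigma(F\otimes\Id)(D)$ is already an isomorphism onto $\Sigma G(D)$, so $\Sigma\Theta$ itself is surjective, i.e.\ $\Sigma\Coker\Theta=0$. By Lemma \ref{SigmaF=0}, $\Coker\Theta$ is constant; since the Schur--Weyl correspondence used throughout this section requires analytic functors vanishing at $0$, this forces $\Coker\Theta=0$. Hence the sequence
\begin{equation*}
0\to\Ker\Theta\to F\otimes\Id\to G\to 0
\end{equation*}
is exact, and in $K_0(\fS)$ we obtain $[\Ker\Theta]+[G]=[F\otimes\Id]$.

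Second, I would verify the two dictionary identities $[E\otimes\Id]=\Ind[E]$ and $[\Sigma E]=\Res[E]$ for any analytic functor $E$. For the first, the ${\bf 1^{n+1}}$-weight space of $E(K^{n+1})\otimes K^{n+1}$ splits as $\bigoplus_{i=1}^{n+1}E(K^{n+1})_{\mu_i}\otimes K x_i$, where $\mu_i$ is the weight with $0$ in position $i$ and $1$ elsewhere; each summand is canonically $E(K^n)_{{\bf 1^n}}$ via the embedding $K^n\hookrightarrow K^{n+1}$ omitting the $i$-th coordinate, and assembling over $i$ gives $\Ind_{\fS_n}^{\fS_{n+1}}E(n)_{{\bf 1^n}}$ as an $\fS_{n+1}$-module. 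For the second, $\Sigma E(n)_{{\bf 1^n}}=E(Kx_0\oplus K^n)_{(1,1,\dots,1)}=E(K^{n+1})_{{\bf 1^{n+1}}}$, which as an $\fS_n$-module is the restriction from $\fS_{n+1}$ to the stabilizer of $x_0$.

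Third, by Lemma \ref{crit} we have $F\simeq\Sigma\Ker\Theta$; applying the second identity above gives $[F]=\Res[\Ker\Theta]$, hence $\Ind[F]=\Ind\circ\Res[\Ker\Theta]$. Substituting into the equality $[\Ker\Theta]+[G]=[F\otimes\Id]=\Ind[F]$ obtained earlier proves both equalities in the statement. The main delicate point is the weight-space bookkeeping in the second paragraph, where the $\fS_{n+1}$- and $\fS_n$-actions produced by tensoring with $\Id$ and by suspension must be correctly identified with the induced and restricted representations.
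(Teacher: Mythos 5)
Your proof is correct and follows essentially the same route as the paper's, which simply cites the Schur--Weyl dictionary (a three-line table identifying $\otimes\,\Id$ with $\Ind$ and $\Sigma$ with $\Res$) without spelling out the verification; you supply exactly that verification together with the short exact sequence $0\to\Ker\Theta\to F\otimes\Id\to G\to 0$ and Lemma \ref{crit}. The one point you pass over quickly is the justification that $\Coker\Theta=0$: the constancy of $\Coker\Theta$ forces $\Coker\Theta=G(0)$, so one needs $G(0)=0$, which is not formally contained in the definition of a cyclic triple but holds in every instance used in the paper and is clearly intended.
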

 
 \begin{proof}
 It follows from the fact that
the Schur-Weyl duality establishes the following correspondences:
 
 \begin{tabular}{|l|c|c|}
 \hline
 Categories&  Analytic functors & $\fS$-modules \\
 \hline
  & Cyclic analytic functors & Cyclic $\fS$-modules\\
 \hline
 Functor  &   $\otimes\,\Id$ & $\Ind$\\
 \hline
 Functor & Suspension $\Sigma$ & $\Res$\\

        \hline
 \end{tabular}

\end{proof}

\bigskip

\noindent
{\it 5.4 A list of analytic functors and $\fS$-modules}

We will now provide a list of analytic functors $P$.
For those, the analytic $GL(D)$-module $P(D)$ has been defined,
so the definition of the corresponding functor is easy.
This section is mostly about notations.

For example, for $V\in Vect_K$,  let $T(V)$ be the free 
non-unital associative algebra over the vector space $V$.
The functor $[T,T]$ is the subfunctor defined  by $[T,T,](V)= [T(V),T(V)]$.
Similarly, there are functors $J:V\mapsto J(V)$, $SJ:V\mapsto SJ(V)$
and $CJ:V\mapsto CJ(V)$ which provide, respectively, the free Jordan algebras, the free special Jordan algebras and the free Cohn-Jordan 
algebras. 

Concerning the derivations, we will consider the 
analytic functors
${\cal B}J$, ${\cal B}SJ$, $\Inner SJ$ and $\Inner CJ$.
The last two are functors by Lemma \ref{=[]}.

For the missing spaces, we will consider
the analytic functors of missing tetrads $M=CJ/SJ$,  
of missing derivations $MD=\Inner CJ/\Inner SJ$, which is
a functor by Lemma \ref{=[]}. Also we will consider the
functor of special identities $SI=\Ker J\to SJ$.

Since it is a usual notation, denote by
$Ass$ the associative operad. The other $\fS$-modules will be denoted with calligraphic letters. The Jordan operad is denoted by ${\cal J}$.
As a $\fS$-module, it is defined by 
${\cal J}(D)=J(D)_{\bf 1^D}$. The special Jordan operad
${\cal SJ}$ and the Cohn-Jordan operad ${\cal CJ}$ are defined similarly.
The $\fS$-modules ${\cal M}$,  ${\cal MD}$ and ${\cal SI}$ are the
$\fS$-modules corresponding to the analytic functors $M$, $MD$ and $SI$.

\bigskip
\noindent
{\it 5.5 The cyclic structure on
$T$ and $CJ$}

\noindent
 We will use Lemma \ref{crit} to describe the cyclic structure on the  tensor algebras. It is  more complicated than usual \cite{GK}, because we are looking at a  cyclic structure which is compatible with the free Jordan algebras.
The present approach is connected with \cite{M}.

 The natural map $TV\otimes V\to [TV,TV],\,u\otimes v\mapsto [u,v]$
 for any $V\in Vect_K$ is a natural transformation
 $\Theta_T:T\otimes \Id\to [T,T]$.

\begin{lemma}\label{Tcyclic} The triple
$(T, [T,T], \Theta_T)$ is cyclic.
 \end{lemma}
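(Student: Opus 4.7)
The plan is to work with the explicit monomial basis of $T$. Identify $\Sigma T(D)$ with the subspace of $T(Kx_0\oplus K^D)$ spanned by words in $x_0,x_1,\dots,x_D$ that contain $x_0$ exactly once; in degree $m$ this space has dimension $(m+1)D^m$. Because such a word has an aperiodic cyclic orbit (the unique occurrence of $x_0$ breaks all rotational symmetry) and $T/[T,T]$ is spanned by cyclic classes of words, one reads off $\dim \Sigma(T/[T,T])(D)_m=D^m$, hence $\dim \Sigma[T,T](D)_m=mD^m$. On the other hand $\dim(\Sigma T(D)\otimes K^D)_m=mD^{m-1}\cdot D=mD^m$, so the source and target of the induced map have the same dimension in each degree, and it suffices to prove surjectivity of
\[\alpha\colon \Sigma T(D)\otimes K^D\to \Sigma[T,T](D),\qquad u\otimes x_k\mapsto [u,x_k]\quad (k\in\{1,\dots,D\}).\]

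Next, decompose $\Sigma T(D)=\bigoplus_C K[C]$ over cyclic classes $C$ of length-$(m+1)$ words containing a unique $x_0$. Each class $C=\{w_0,\dots,w_m\}$ has exactly $m+1$ elements, where $w_i$ denotes the rotation of $w_{i-1}$ sending the first letter to the end. The intersection $\Sigma[T,T](D)\cap K[C]$ is the kernel of the total-coefficient map $K[C]\to K$, hence $m$-dimensional, and it is spanned by the $m+1$ rotation differences $d_i=w_{i-1}-w_i$ (whose total sum vanishes).

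Writing $w_{i-1}=y_iv_i$ with $y_i$ its first letter and $v_i$ the remaining length-$m$ word, we have $d_i=[y_i,v_i]$. The unique $x_0$ of $w_{i-1}$ lies in the position occupied by $y_i$ for exactly one index $i^*$; for every other $i$ one has $y_i=x_{k_i}$ with $k_i\in\{1,\dots,D\}$ and $v_i$ a word containing the unique $x_0$, so $v_i\in \Sigma T(D)$. Hence $d_i=-[v_i,x_{k_i}]=-\alpha(v_i\otimes x_{k_i})\in \Image\alpha$ for every $i\neq i^*$. Since the $m+1$ vectors $d_i$ sum to zero, any $m$ of them span the $m$-dimensional kernel, so $\Sigma[T,T](D)\cap K[C]\subseteq \Image\alpha$. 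Summing over cyclic classes (the singleton class $\{x_0\}$, in degree $0$, contributing nothing) gives surjectivity; combined with the dimension match, $\alpha$ is an isomorphism and the triple $(T,[T,T],\Theta_T)$ is cyclic.

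The crux is the final combinatorial observation: in each cyclic orbit of a word with a unique $x_0$, all but one of the neighbouring rotation differences are tautologically in $\Image\alpha$, and discarding the lone ``bad'' difference still leaves a spanning set of the relevant $m$-dimensional kernel. Beyond this, the argument is just dimension bookkeeping, so I do not anticipate any serious obstacle.
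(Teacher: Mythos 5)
Your proof is correct. It turns on the same fact the paper uses — the unique $x_0$ makes the cyclic action on length-$(m+1)$ words free — but you organize the argument differently. The paper grades $\Sigma(T_n\otimes\Id)(V)\simeq W^{\otimes(n+1)}$ by the \emph{position} of $x_0$, identifies $\Sigma\Theta_T$ with $1-t$ where $t$ is the cyclic shift, and invokes the elementary observation that if $t$ has order $n+1$ and cyclically permutes the pieces of a $\Z/(n+1)$-grading, then $1-t$ restricted to the sum of all but one graded piece is an isomorphism onto $(1-t)(M)$; this delivers injectivity and surjectivity simultaneously, with no dimension count. You instead decompose by cyclic orbit, use the count $\dim(\Sigma T(D)\otimes K^D)_m=mD^m=\dim\Sigma[T,T](D)_m$ to reduce to surjectivity, and then observe within each orbit that the $m$ rotation differences $d_i$ with $i\neq i^*$ lie tautologically in $\Image\alpha$ and already span the $m$-dimensional kernel of the augmentation. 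Both routes are sound; the paper's is marginally more economical since bijectivity falls out in a single step, while yours is more concrete in that it exhibits explicit preimages on a monomial basis.
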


\begin{proof} Let's begin with a simple observation.
Let $n$ be an integer, let $M=\oplus_{0\leq k\leq n} \,M_k$
be a  vector space and  let $t:M\to M$
be an automorphism of order $n+1$ such that $t(M_k)\subset M_{k+1}$
for any $0\leq k<n$ and $t\,M_n\subset M_0$.
 Then it is clear that the map 

\centerline{$\oplus_{0\leq k<n}\,M_k\to(\,1-t)(M), u\mapsto u-t(u)$}

\noindent is an isomorphism

To prove the lemma, it is enough to prove that the triple
$(T_n, [T,T]_{n+1}, \Theta_T)$ is cyclic for any integer $n$.
Let $V\in Vect_k$ and set $W=k.x_0\oplus V$.
Since we have $[TW,TW]=[TW,W]$,  it follows 

\centerline{$\Sigma\Theta_T( T_n\otimes\Id )(V))=\Sigma [T,T]_{n+1}(V)$.}

Once $T_n W\otimes W$ is identified with $W^{\otimes n+1}$, the
map 

\centerline{$\Theta_T:T_n W\otimes W\to T_{n+1}\,W,\,u\otimes w\mapsto [u,w]$}

\noindent is
identified with the map $1-t$, where
 $t$ is the automorphism of $W^{\otimes n+1}$ defined by
$t(w_0\otimes w_1\otimes w_n)=w_n\otimes w_0\otimes...\otimes w_{n-1}$.
Set 
$M_k=V^{\otimes k}\otimes x_0\otimes V^{\otimes n-k}$ for any $k$.
We have 

\centerline{$\Sigma( T_n\otimes\Id )(V)=\oplus_{0\leq k\leq n} M_k$,
 and $\Sigma\,T_n V\otimes V=\oplus_{0\leq k< n} M_k$.}

\noindent Since $t(M_k)\subset M_{k+1}$
for any $0\leq k<n$ and $t\,M_n\subset M_0$, it follows from the previous observation that $\Theta_T$ induces an isomorphism 
from $\Sigma\,T_n V\otimes V$ to $\Sigma [T,T]_{n+1}(V)$, so the
triple $(T, [T,T], \Theta_T)$ is cyclic.
\end{proof}

Let $V\in Vect_K$. It follows from Lemma \ref{=[]} that $\Inner\,CJV=[CJV,CJV]$. So the natural map 
$CJV\otimes V\to \Inner\,SV,\,u\otimes v\mapsto [u,v]$
is a natural transformation
 $\Theta_{CJ}:CJ\otimes \Id\to \Inner\,CJ$.

\begin{lemma}\label{Scyclic} The triple $(CJ,\Inner\,CJ,\Theta_{CJ})$ is cyclic.  
\end{lemma}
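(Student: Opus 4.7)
The plan is to deduce the cyclicity of $(CJ, \Inner\,CJ, \Theta_{CJ})$ from the cyclicity of $(T, [T,T], \Theta_T)$ already established in Lemma \ref{Tcyclic}, by decomposing every space in sight into eigenspaces of the word-reversal involution $\sigma$ and reading off the correct summand. As functors we have $T = CJ \oplus A$ with $CJ = T^{\sigma}$ and $A = T^{-\sigma}$, and if we let $\sigma$ act trivially on the functor $\Id$, then the source of $\Sigma \Theta_T$ splits as $\Sigma T(V) \otimes V = \Sigma CJ(V) \otimes V \oplus \Sigma A(V) \otimes V$.

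First, I would verify that $\Theta_T$ is $\sigma$-antiequivariant: for $u \in T(V)$ of $\sigma$-parity $\epsilon \in \{\pm 1\}$ and $v \in V$, the one-line computation $\sigma(uv - vu) = \sigma(v)\sigma(u) - \sigma(u)\sigma(v) = \epsilon(vu - uv) = -\epsilon[u,v]$ shows that $\Theta_T$ carries $(T \otimes V)^{\epsilon}$ into $[T,T]^{-\epsilon}$. Because all the structure (the involution $\sigma$, the functor $\Sigma$, the natural transformation $\Theta_T$) is natural in $V$, the isomorphism $\Sigma T \otimes V \xrightarrow{\sim} \Sigma[T,T]$ of Lemma \ref{Tcyclic} respects the $\Z/2$-grading up to this sign flip, and therefore restricts to two component isomorphisms $\Sigma CJ(V) \otimes V \xrightarrow{\sim} \Sigma[T,T]^{-\sigma}(V)$ and $\Sigma A(V) \otimes V \xrightarrow{\sim} \Sigma[T,T]^{\sigma}(V)$.

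To conclude, I need to identify $[T,T]^{-\sigma}$ with $\Inner\,CJ$ on the nose. Lemma \ref{=[]} applied to the inclusion $CJ \subset T(D)$ (whose hypotheses are checked in the proof of Lemma \ref{dimInner}, part 1) gives $\Inner\,CJ = [CJ, CJ]$, and the same argument of that proof gives $[CJ, CJ] = A \cap [T,T] = [T,T]^{-\sigma}$. Transporting the first component isomorphism across this identification, one sees by inspection that the induced map is exactly the one built from the commutator $u \otimes v \mapsto [u,v]$ for $u \in CJ(W)|_1$ and $v \in V$, i.e.\ precisely $\Sigma \Theta_{CJ}$.

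I do not foresee any serious obstacle. The only point requiring mild care is tracking the sign twist when passing to $\sigma$-eigenspaces, together with checking that $\Sigma$ commutes with the $\sigma$-eigenspace decomposition (which is automatic because both are subfunctor constructions exact in characteristic zero). Everything else is an application of Lemma \ref{Tcyclic} and the already-known description of $\Inner\,CJ$ inside $[T,T]$.
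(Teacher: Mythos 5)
Your proof is correct and takes essentially the same approach as the paper's, which is the one-line assertion that $(CJ,\Inner\,CJ,\Theta_{CJ})$ is a direct summand of the cyclic triple $(T,[T,T],\Theta_T)$. You have simply made explicit the detail the paper treats as obvious, namely the $\sigma$-antiequivariance $\sigma[u,v]=-\epsilon[u,v]$ that aligns $CJ\otimes\Id=(T\otimes\Id)^{\sigma}$ with $[T,T]^{-\sigma}=A\cap[T,T]=\Inner\,CJ$ and thereby makes the summand decomposition compatible with $\Theta_T$.
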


\

\begin{proof} It is clear that the triple $(CJ,\Inner CJ,\Theta_CJ)$ is a direct summand of the previous one, so it is cyclic.
\end{proof}

\noindent
\bigskip{\it 5.6 A preliminary result}

\noindent
According to \cite{Le}, Schreier first proved a  statement
similar to the next Theorem
in the more difficult context of  the free group algebras. Next it has been proved by Kurosh \cite{Ku} and Cohn \cite{Co} in the context of
the free monoid algebras, or, equivalently for the enveloping algebra of a free Lie algebra.

\begin{Kutheorem}
Let $F$ be a free Lie algebra, and let $M$
be a free module. Then any submodule $N\subset M$ is free.
\end{Kutheorem}

Let $D\geq 1$ be an integer and let  $F$ be the free Lie algebra generated by
$F_1:=\fsl_2\otimes K^D$, i.e.
$F$ is a free Lie algebra on $3D$ generators
on which $PSL(2)$ acts by automorphism.
Let ${\cal M}(F,PSL(2))$ be the category of $PSL(2)$-equivariant $F$-modules.
The $F$-action on a module $M\in {\cal M}(F,PSL(2))$ is 
a $PSL(2)$-equivariant map 
$M\otimes F\to M, m\otimes g\mapsto g.m$.  It induces the map

\centerline{$\mu_M:H_0(\fsl_2, M^{ad}\otimes F_1)
\to H_0(\fsl_2, M)$.}

Recall that $X=X^{\fsl_2}\oplus \fsl_2.X$ for any $PSL(2)$-module $X$.

\begin{lemma}\label{lemmamu}
Let $0\to Y\to X\to M\to 0$ be a short exact sequence in
${\cal M}(F,PSL(2))$. Assume that

1. the $F$-module $X$ is  free  and generated by $\fsl_2.X$,

2. $Y$ is  generated by $\fsl_2.Y$.

Then the map the map   $\mu_M$  is an isomorphism.

\end{lemma}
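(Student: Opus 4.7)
The plan is to show $\mu_X$ and $\mu_Y$ are already isomorphisms, and then deduce the same for $\mu_M$ by a snake-lemma argument on the commutative diagram with vertical maps $\mu_Y,\mu_X,\mu_M$ and rows
\[
0\to H_0(\fsl_2,Y^{ad}\otimes F_1)\to H_0(\fsl_2,X^{ad}\otimes F_1)\to H_0(\fsl_2,M^{ad}\otimes F_1)\to 0
\]
and
\[
0\to H_0(\fsl_2,Y)\to H_0(\fsl_2,X)\to H_0(\fsl_2,M)\to 0.
\]
Both rows are exact: the bottom is just $(-)^{\fsl_2}$ applied to the short exact sequence $0\to Y\to X\to M\to 0$, while the top is obtained by composing the exact functors $(-)^{ad}$, $\otimes F_1$, and $(-)^{\fsl_2}$.

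The key preliminary is to identify the map $\mu_N$ in an intrinsic way. For any $N\in{\cal M}(F,PSL(2))$, the hypothesis ``$N$ is generated as an $F$-module by $\fsl_2.N$'' is equivalent to $H_0(F,N)^{\fsl_2}=0$, since the projection $N\to H_0(F,N)$ is $\fsl_2$-equivariant and $\fsl_2$-modules are semisimple. Next, because $F$ is generated by $F_1$ as a Lie algebra (so $F_k=[F_1,F_{k-1}]$), an easy induction on degree shows $F\cdot N=F_1\cdot N$ for every $F$-module $N$. Hence the action sequence $F_1\otimes N\to N\to H_0(F,N)\to 0$ is right exact; applying the exact functor $(-)^{\fsl_2}$, and noting that $(F_1\otimes N)^{\fsl_2}=(N^{ad}\otimes F_1)^{\fsl_2}$ (because $F_1=L(2)\otimes K^D$ is purely of adjoint $\fsl_2$-type, so only the $L(2)$-isotypical component of $N$ contributes), we get a right-exact sequence whose first map is exactly $\mu_N$. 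Thus $\mu_N$ is surjective as soon as $H_0(F,N)^{\fsl_2}=0$.

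For $N=X$: surjectivity of $\mu_X$ follows from hypothesis 1 by the above. For injectivity, freeness of $X$ gives $H_1(F,X)=0$, and the four-term exact sequence $0\to H_1(F,X)\to F\otimes X\to X\to H_0(F,X)\to 0$ (valid because $U(F)$ has global dimension one when $F$ is free) shows $F\otimes X\hookrightarrow X$; restricting to $F_1\otimes X$ and taking $(-)^{\fsl_2}$ preserves injectivity, so $\mu_X$ is injective. For $\mu_Y$: by the Schreier-Kurosh-Cohn theorem, $Y$ is itself free as a submodule of the free $F$-module $X$, and the identical argument with hypothesis 2 in place of hypothesis 1 shows $\mu_Y$ is an isomorphism. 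The snake lemma applied to the diagram above then forces $\mu_M$ to be an isomorphism as well. The one piece of actual work is the bookkeeping that identifies the map $\mu_N$ of the statement with the first map of the right-exact sequence $(N^{ad}\otimes F_1)^{\fsl_2}\to N^{\fsl_2}\to H_0(F,N)^{\fsl_2}\to 0$; once this identification and the two hypothesis-translations are in place, everything else is a standard diagram chase.
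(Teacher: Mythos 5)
Your proof is correct and follows essentially the same route as the paper: establish that $\mu_X$ and $\mu_Y$ are isomorphisms (using freeness of $X$, freeness of $Y$ via Schreier--Kurosh--Cohn, and the two ``generated by $\fsl_2$-part'' hypotheses), then conclude for $\mu_M$ by the snake lemma on the same commutative diagram. One slip worth fixing: the four-term sequence you invoke should read $0\to H_1(F,X)\to F_1\otimes X\to X\to H_0(F,X)\to 0$ (coming from the minimal free resolution $0\to U(F)\otimes F_1\to U(F)\to K\to 0$), not $F\otimes X$; with $F\otimes X$ the Chevalley--Eilenberg differential from $\Lambda^2 F\otimes X$ is nonzero, so $H_1(F,X)=0$ does not give injectivity of $F\otimes X\to X$. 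Since you immediately ``restrict to $F_1\otimes X$'' and cite global dimension one, the intended and correct argument is clear, but as written the intermediate claim is false.
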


\begin{proof} 
Since $X$ is free, the action $X\otimes F_1\to F.X,
m\otimes g\mapsto g.m$ is an isomorphism, therefore the map  

\centerline{$H_0(\fsl_2, X\otimes F_1) \to H_0(\fsl_2, F.X)$}

\noindent is an isomorphism. Since $F_1$ is of adjoint type,
we have $H_0(\fsl_2, X\otimes F_1)=H_0(\fsl_2, X^{ad}\otimes F_1)$.
As $X$ is generated by $\fsl_2.X$ we have
$H_0(\fsl_2, X/F.X)=0$, so we have $H_0(\fsl_2, F.X)=H_0(\fsl_2, X)$.
It follows that $\mu_X$ is an isomorphism.

By Schreier-Kurosh-Cohn Theorem, $Y$ is also free, and therefore
$\mu_Y$ is also an isomorphism. By the snake lemma, it follows that
$\mu_M$ is an isomorphism.

 \end{proof}

\bigskip Similarly, for $M\in {\cal M}_{\bf T}(\fsl_2\,J(D))$, the action induces a map

\centerline{$\mu_M:H_0(\fsl_2, M^{ad}\otimes (\fsl_2\otimes J_1(D)))
\to H_0(\fsl_2, M)$.}

\begin{lemma}\label{muM}
Let  $M$ be the free $\fsl_2\,J(D)$-module in category 
${\cal M}_{\bf T}(\fsl_2\,J(D))$ generated by one copy of the adjoint module $L(2)$. Then the map $\mu_M$ is an isomorphism.
\end{lemma}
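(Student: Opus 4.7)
The plan is to apply Lemma \ref{lemmamu} to an appropriate short exact sequence over the free Lie algebra $F$ on $F_1 = \fsl_2\otimes K^D$. The canonical surjection $\pi : F \twoheadrightarrow \fsl_2\,J(D)$ is an isomorphism in degree one, identifying $F_1$ with $\fsl_2\otimes J_1(D)$; so restricting through $\pi$ turns $M$ into a $PSL(2)$-equivariant $F$-module whose $\mu_M$-map (in the sense of Lemma \ref{lemmamu}) coincides with the $\mu_M$-map of Lemma \ref{muM}.

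Set $X = U(F)\otimes L(2)$, the free $F$-module on $L(2)$ in the $PSL(2)$-equivariant category, with $\fsl_2$ acting diagonally by its derivation on $U(F)$ and the adjoint on $L(2)$. Since $L(2) = \fsl_2\cdot L(2)$ generates $X$ freely, hypothesis (1) of Lemma \ref{lemmamu} is immediate. By the universal property of $M$ there is a canonical $F$-module surjection $\varphi : X \twoheadrightarrow M$ extending the identity on $L(2)$; put $Y = \ker\varphi$. Concretely, $Y$ is generated as an $F$-submodule by two families of relations: (a) $(\ker\pi)\cdot L(2) \subset X$, needed so that the action factors through $\fsl_2\,J(D)$, and (b) the $\fsl_2$-isotypical components of $X$ of type $L(2k)$ for $k\ge 2$, needed so that the quotient lies in $\mathbf{T}$.

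The content is hypothesis (2), that $Y$ is generated by $\fsl_2\cdot Y$. Using the length-two Koszul resolution $0\to U(F)\otimes F_1 \to U(F)\to K\to 0$ of the trivial $F$-module (valid because $U(F)$ is the free associative algebra on $F_1$) and the long exact sequence of $0\to Y\to X\to M\to 0$, one identifies $Y/F\cdot Y\simeq H_1(F,M) = \ker(F_1\otimes M\to M)$; by $\fsl_2$-reductivity the required condition is $H_1(F,M)^{\fsl_2} = 0$, equivalently the injectivity of $\mu_M$. Surjectivity of $\mu_M$ is automatic, since $M/F\cdot M = L(2)$ has no $\fsl_2$-invariants. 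Family (b) contributes only non-trivial $\fsl_2$-types to $Y/F\cdot Y$. For family (a), the Casimir-type trivial summands of $(\ker\pi)^{ad}\otimes L(2)$ must be shown to lie in $F\cdot(\fsl_2\cdot Y)$; this is done by writing $v = s\cdot v'$ with $v\in L(2)$ and $s\in\fsl_2$ and applying the Leibniz rule $s\cdot(n\otimes v') = (s\cdot n)\otimes v' + n\otimes(s\cdot v')$, using the $PSL(2)$-stability of $\ker\pi$ together with the already-handled relations of type (b) to perform induction on the $F$-grading. Once this is verified, Lemma \ref{lemmamu} directly yields the conclusion.

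\emph{Main obstacle.} The technical core is the inductive absorption of Casimir summands coming from $(\ker\pi)\cdot L(2)$: one must check that every such trivial-type element can be expressed via $F$-action on non-trivial relations (including those of type (b)) without leaving a residual $\fsl_2$-invariant primitive generator in $Y/F\cdot Y$. A clean formulation of this step may be obtained by exhibiting an explicit graded model of $M$ via the $TAG$ construction and verifying degree-by-degree that the $L(0)$-component of the image of $F_1\otimes M^{ad}$ already fills $M^{\fsl_2}$.
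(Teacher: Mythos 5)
Your setup is the same as the paper's: take the free $F$-module $X = U(F)\otimes L(2)$, set $Y=\Ker(X\to M)$, and reduce Lemma~\ref{muM} to checking hypothesis (2) of Lemma~\ref{lemmamu}, i.e.\ that $Y$ is generated by $\fsl_2\cdot Y$. Your bookkeeping ($Y/F\cdot Y\simeq H_1(F,M)$ via the Koszul resolution, equivalence with injectivity of $\mu_M$, automatic surjectivity) is correct and is a useful gloss on Lemma~\ref{lemmamu}.

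The gap is in the step you yourself flag as the ``main obstacle.'' The Leibniz manipulation $n\otimes v = s\cdot(n\otimes v') - (s\cdot n)\otimes v'$ replaces one trivial-type element of $(\ker\pi)^{ad}\otimes L(2)$ by another of the same degree and of the same form: $s\cdot n$ is again an element of $\ker\pi$ and $(s\cdot n)\otimes v'$ is again a generator from family (a). There is no decrease in the $F$-grading and no reason the $L(0)$-component eventually vanishes or lands in $F\cdot Y$; the ``induction on the $F$-grading'' you invoke has no base case and no descent. So the argument as stated does not close.

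The paper sidesteps this entirely by interposing the free $F$-module $P$ in $\mathcal{M}_{\mathbf T}(F,PSL(2))$, so that $X\twoheadrightarrow P\twoheadrightarrow M$. Then $\Ker(X\to P)$ is $F$-generated by the $L(4)$-component of $X$, and $\Ker(P\to M)$ is $F$-generated by $K\cdot P$ where $K$ is the $L(4)$-component of $F$. The decisive Clebsch--Gordan observation is that since $P\in\mathbf T$ has only $L(0)$- and $L(2)$-types, and $L(4)\otimes L(0)=L(4)$ while $L(4)\otimes L(2)=L(2)\oplus L(4)\oplus L(6)$, one must have $K\cdot P\subset P^{ad}$. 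Hence $Y$ is $F$-generated by $L(2)$- and $L(4)$-type elements only, and hypothesis (2) of Lemma~\ref{lemmamu} follows at once. Your trivial-type problem simply never arises, because the generators coming from the ideal $\ker\pi$ are replaced by the smaller generating set $K\cdot P$, which is free of trivial type.

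If you want to keep a ``one-step'' argument without introducing $P$, the correct observation is not the Leibniz trick but rather the identity $I\otimes L(2)=U(F)\cdot(K\cdot X)$ for the two-sided ideal $I=U(F)\cdot(\ker\pi)\cdot U(F)=U(F)\cdot K\cdot U(F)$: the $F$-submodule of $X$ generated by family (a) is already generated by $K\cdot X$, and $(K\cdot X)^{\fsl_2}\subset K\cdot X^{L(4)}\subset F\cdot X^{L(4)}\subset F\cdot Y$ because the $L(0)$-type in $K\otimes X$ can only arise from $K\otimes X^{L(4)}$. That recovers the conclusion, but it is essentially the paper's argument in disguise; I would recommend presenting it via $P$ as the paper does, since the role of the $\mathbf T$-constraint is then transparent.
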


\begin{proof}
Let $F$ be the free Lie algebra of the previous lemma. Any
$PSL(2)$-equivariant isomorphism $\phi: F_1\to \fsl_2\otimes J_1(D)$
gives rise to a Lie algebra morphism $\psi:F\to \fsl_2\,J(D)$, so
$M$ can be viewed as a PSL(2)-equivariant $F$-module.

Let $X\in {\cal M}(F, PLS(2))$ be the free $F$-module generated by
$L(2)$ and let $P$ be the free
$F$-module in category ${\cal M}_{\bf T}(F, PLS(2))$ generated by $L(2)$.
There are natural surjective maps of $F$-modules

\centerline{$X \buildrel \pi \over \longrightarrow P
\buildrel \sigma \over \longrightarrow M$.} 

It is clear that $\Ker\, \pi$ is the $F$-submodule of 
$X$ generated by its $L(4)$-component. Let $K$ be the
$L(4)$-component of $F$. It is clear that  
$\fsl_2\,J(D)=F/R$ where $R$ is the ideal of $F$ generated by $K$.
Therefore $\Ker\,\sigma$ is the $F$-submodule of $P$ generated by 
$K.P$. Since $P$ is in ${\bf T}$, we have $K.P\subset P^{ad}$, therefore
$\Ker\,\sigma$ is generated by its adjoint component.

Set $Y=\Ker\, \sigma\circ\pi$. It follows from the descriptions of
$\Ker\,\pi$ and $\Ker\,\sigma $ that $Y$ is generated by its
$L(2)$ and its $L(4)$ components. Thus the short exact sequence

\centerline{$0\to Y\to X\to M\to 0$}

\noindent satisfies the hypotheses of Lemma \ref{lemmamu}. It follows that $\mu_M$ is an isomorphism.

\end{proof}

\bigskip

\noindent
{\it 5.7 Cyclic structures on $J$ and 
$SJ$}

The natural map
$J(V)\otimes V\to {\cal B}J(V), a\otimes v\mapsto\{a,v\}$,
defined for all $V\in Vect_K$ is indeed a natural transformation
$\Theta_J: J\otimes\Id\to {\cal B}J$.

\begin{lemma}\label{cyclicJ}
The triple $(J,{\cal B}J,\Theta_J)$ is cyclic.
\end{lemma}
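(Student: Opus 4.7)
To verify that $(J, {\cal B}J, \Theta_J)$ is cyclic, fix $D \geq 0$ and set $V = K.x_0 \oplus K^D$. I must prove that the induced map $\Sigma J(D) \otimes K^D \to \Sigma{\cal B}J(D)$, $a \otimes v \mapsto \{a \wedge v\}$, is an isomorphism. The plan is to recognize this map as the map $\mu_M$ of Lemma~\ref{muM} for a well-chosen $\fsl_2\,J(D)$-module $M$.

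Grade $J(V)$ by degree in $x_0$. This grading is inherited by ${\cal B}J(V)$ and by $\fsl_2\,J(V) = {\cal B}J(V) \oplus \fsl_2 \otimes J(V)$. The degree-zero component of $\fsl_2\,J(V)$ is precisely $\fsl_2\,J(D)$, and the degree-one component
$$M := (\fsl_2\,J(V))\vert_1 = \Sigma{\cal B}J(D) \oplus \fsl_2 \otimes \Sigma J(D)$$
is a $\fsl_2\,J(D)$-module in ${\cal M}_{\bf T}(\fsl_2\,J(D))$, whose trivial and adjoint $\fsl_2$-parts are $\Sigma{\cal B}J(D)$ and $\fsl_2 \otimes \Sigma J(D)$ respectively.

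The main step is to show that $M$ is \emph{free} in ${\cal M}_{\bf T}(\fsl_2\,J(D))$, generated by the copy of $L(2)$ spanned by $\fsl_2 \otimes x_0$. Given any $N \in {\cal M}_{\bf T}(\fsl_2\,J(D))$ and any $PSL(2)$-equivariant map $\phi: L(2) \to N$, I form the semidirect product Lie algebra $\fg_N := \fsl_2\,J(D) \ltimes N$ with $N$ an abelian ideal, and equip it with the $\Z_{\geq 0}$-grading putting $\fsl_2\,J(D)$ in degree $0$, $N$ in degree $1$, and nothing in higher degrees. Then $\fg_N$ lies in ${\bf Lie_T}$, and by Lemma~\ref{freeTAG} (freeness of $\fsl_2\,J(V)$ in ${\bf Lie_T}$) there is a unique morphism $\fsl_2\,J(V) \to \fg_N$ matching the generators, namely sending $\fsl_2 \otimes x_i \mapsto \fsl_2 \otimes x_i$ for $i \geq 1$ and $\fsl_2 \otimes x_0 \mapsto \phi$. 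This morphism preserves the $x_0$-grading, so its degree-one piece restricts to the desired extension $M \to N$; uniqueness follows since $\fsl_2 \otimes x_0$ generates $M$ as a $\fsl_2\,J(D)$-module.

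By Lemma~\ref{muM}, the map $\mu_M: H_0(\fsl_2, M^{ad} \otimes (\fsl_2 \otimes J_1(D))) \to H_0(\fsl_2, M)$ is an isomorphism. Using $J_1(D) = K^D$, the $1$-dimensionality of $(\fsl_2 \otimes \fsl_2)^{\fsl_2}$ spanned by the $\kappa$-dual element $\sum_i u^i \otimes u_i$, and the identifications $M^{ad} = \fsl_2 \otimes \Sigma J(D)$ and $M^{\fsl_2} = \Sigma{\cal B}J(D)$, this rewrites as an isomorphism $\Sigma J(D) \otimes K^D \to \Sigma{\cal B}J(D)$. A direct computation from $[u(v), w(a)] = [u, w](v \circ a) + \kappa(u, w)\{v \wedge a\}$ identifies $\mu_M(a \otimes v)$ with $3\{v \wedge a\}$: the first term contributes $\sum_i [u^i, u_i] = 0$ by $\fsl_2$-invariance, while the second contributes $\sum_i \kappa(u^i, u_i) = \dim \fsl_2 = 3$. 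Hence $\mu_M$ equals the restriction of $\Sigma\Theta_J$ to $\Sigma J(D) \otimes K^D$ up to the nonzero scalar $-3$, and the triple $(J, {\cal B}J, \Theta_J)$ is cyclic. The only non-routine point is the freeness of $M$, and this reduces cleanly to the universal property of Lemma~\ref{freeTAG} via the semidirect-product construction.
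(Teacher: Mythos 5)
Your proof is correct and follows essentially the same approach as the paper's: both reduce the claim to showing that $\Sigma\fsl_2\,J(D)$ is the free $\fsl_2\,J(D)$-module in ${\cal M}_{\bf T}(\fsl_2\,J(D))$ generated by one copy of $L(2)$, using a semidirect product $\fsl_2\,J(D)\ltimes N$, the freeness of $\fsl_2\,J(1+D)$ in ${\bf Lie_T}$ (Lemma~\ref{freeTAG}), and finally Lemma~\ref{muM}. You verify the universal property directly for arbitrary $N$, whereas the paper takes the abstract free module and builds a concrete isomorphism with $\Sigma\fsl_2\,J(D)$ --- an equivalent organization --- and your closing scalar computation identifying $\mu_M$ with $\Sigma\Theta_J$ up to $\pm 3$ makes explicit a step the paper only asserts.
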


\begin{proof} Let $D\geq 0$ be an integer.
Let $M$ be the free $\fsl_2\,J(D)$-module in category 
${\cal M}_{\bf T}(\fsl_2\,J(D))$ generated by one copy $L$ of the 
adjoint module. Let $\fg$ be the Lie algebra
$\fsl_2\,J(D)\ltimes M$. 
Let $\phi$ be a $PSL(2)$-equivariant map 
$\phi:J(D)\otimes K^{1+D} \to \fg$ defined by  the  requirement
that $\phi$ is the identity on $J(D)\otimes K^D$ and
$\phi\vert_{J(D)\otimes x_0}$ is an isomorphism to $L$. 

By Lemma \ref{freeTAG},  $\fsl_2\,J(D)$ is free in the category
${\bf Lie}_{\bf T}$. Therefore $\phi$ extends to a Lie algebra
morphism $\Phi:\fsl_2\,J(D)\rightarrow G$. Note that
$\Phi$ sends $\Sigma J(D)$ to $M$. Since 
$\Sigma J(D)$ a the $\fsl_2\,J(D)$-module generated by
$J(D)\otimes x_0$, it follows that  

  \centerline{$\Sigma \fsl_2\,J(D)\simeq M$ as a $\fsl_2\,J(D)$-module.}

By Lemma \ref{muM}, $\mu_M$ is an isomorphism, which amounts to the fact that  

\centerline{$\Sigma J(J)\otimes K^D\to \Sigma{\cal B}J(D), \ a\otimes v\mapsto\{a,v\}$}

\noindent is an isomorphism. Therefore the triple $(J,{\cal B}(J),\Theta_J)$ is cyclic.

\end{proof}

The natural transformation $\Theta_J$ induces a natural transformation
$\Theta_{SJ}:SJ\otimes\Id\to \Inner\,SJ$. Similarly, we have

\begin{lemma}\label{cyclicSJ}
The triple $(SJ,\Inner\,SJ, \Theta_{SJ})$ is cyclic.

Moreover the natural map
$\Sigma{\cal B}\,SJ(D)\to \Sigma\,\Inner\,SJ(D)$ is an
isomorphism for all $D$. 
\end{lemma}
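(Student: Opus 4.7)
\noindent\emph{Proof plan.} The idea is to sandwich $SJ$ between $J$ and $CJ$, for which cyclicity of the analogous triples has already been established (Lemmas~\ref{cyclicJ} and~\ref{Scyclic}). The proof of Lemma~\ref{cyclicJ} used in an essential way the freeness of $\fsl_2\,J(D)$ in $\mathbf{Lie_T}$, which has no analogue for $SJ$; instead I will rely on the two natural morphisms $J \twoheadrightarrow SJ \hookrightarrow CJ$ together with the functoriality of the constructions ${\cal B}$ and $\Inner$.

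For cyclicity of $(SJ,\Inner\,SJ,\Theta_{SJ})$, I plan to prove injectivity and surjectivity of $\Sigma SJ(D)\otimes K^D \to \Sigma\Inner\,SJ(D)$ separately. \emph{Injectivity:} $\Theta_{SJ}$ is the restriction of $\Theta_{CJ}$ along $SJ\hookrightarrow CJ$, so the map fits into
$$\Sigma SJ(D)\otimes K^D \hookrightarrow \Sigma CJ(D)\otimes K^D \xrightarrow{\sim} \Sigma\Inner\,CJ(D),$$
whose image lies in $\Sigma\Inner\,SJ(D)\subset\Sigma\Inner\,CJ(D)$. The displayed composition is injective by Lemma~\ref{Scyclic}, forcing $\Sigma SJ(D)\otimes K^D \to \Sigma\Inner\,SJ(D)$ to be injective. \emph{Surjectivity:} the composition
$$\Sigma J(D)\otimes K^D \xrightarrow{\sim} \Sigma{\cal B}J(D) \twoheadrightarrow \Sigma\Inner\,J(D) \twoheadrightarrow \Sigma\Inner\,SJ(D)$$
is surjective (first iso by Lemma~\ref{cyclicJ}, second surjection because ${\cal B}J\to\Inner\,J$ hits the generators $\partial_{u,v}$, third by functoriality of $\Inner$ applied to $J\twoheadrightarrow SJ$). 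Up to the overall scalar $4$ from $4\partial_{u,v}=\ad[u,v]$ (Lemma~\ref{=[]}), this composition equals $\Sigma J(D)\otimes K^D \twoheadrightarrow \Sigma SJ(D)\otimes K^D \to \Sigma\Inner\,SJ(D)$, so the latter is surjective.

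For the moreover statement, I would factor the cyclicity isomorphism as
$$\Sigma SJ(D)\otimes K^D \longrightarrow \Sigma{\cal B}SJ(D) \longrightarrow \Sigma\Inner\,SJ(D).$$
The first arrow is surjective by the same type of diagram chase: ${\cal B}J\twoheadrightarrow{\cal B}SJ$ is surjective by functoriality of ${\cal B}$, so $\Sigma J(D)\otimes K^D \xrightarrow{\sim} \Sigma{\cal B}J(D) \twoheadrightarrow \Sigma{\cal B}SJ(D)$ is surjective and factors through $\Sigma SJ(D)\otimes K^D$. Since the total composition is (up to the same scalar $4$) the cyclicity isomorphism from the first part, both arrows in the factorization must be isomorphisms. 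The main potential pitfall of the whole argument is purely bookkeeping, namely keeping track of the scalar $4$ relating the two incarnations $\partial_{u,v}$ (through the ${\cal B}$-construction) and $\ad[u,v]$ (inside $T$) of the same element of $\Inner\,SJ$; this scalar is immaterial for isomorphism statements but must be carried through carefully when verifying commutativity of the relevant squares.
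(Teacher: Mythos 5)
Your proof is correct and follows essentially the same route as the paper: the paper sets up a single commutative diagram with rows $\Sigma J(D)\otimes K^D \twoheadrightarrow \Sigma SJ(D)\otimes K^D \hookrightarrow \Sigma CJ(D)\otimes K^D$ and $\Sigma{\cal B}J(D)\twoheadrightarrow\Sigma{\cal B}SJ(D)\twoheadrightarrow\Sigma\Inner\,SJ(D)\hookrightarrow\Sigma\Inner\,CJ(D)$, then chases it using the isomorphisms from Lemmas~\ref{cyclicJ} and \ref{Scyclic} to conclude that the middle vertical arrows are isomorphisms; your separate injectivity/surjectivity arguments and the factorization through $\Sigma{\cal B}SJ(D)$ are exactly the two halves of that chase. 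Your care about the factor $4$ relating $\partial_{u,v}$ and $\ad[u,v]$ is a small point the paper glosses over, and you are right that it is harmless for the isomorphism claim.
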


\begin{proof}
For any $D$, there is a commutative diagram

$\Sigma J(D)\otimes K^D$ \hskip2mm
$\buildrel a \over \twoheadrightarrow $ \hskip2mm
$\Sigma SJ(D)\otimes K^D$   
\hskip5mm $\buildrel b \over \longhookrightarrow$ 
\hskip5mm $\Sigma CJ(D)\otimes K^D$

\hskip1cm$\alpha'\downarrow$ \hskip1.6cm$\alpha$ $\swarrow$  
\hskip8mm  $\searrow\beta$
\hskip3cm$\downarrow\beta'$

$\Sigma{\cal B}SJ(D)$ $\twoheadrightarrow$ $\Sigma{\cal B}SJ(D)$
$\twoheadrightarrow$ $\Sigma \Inner\,SJ(D)$ 
$\hookrightarrow$  $\Sigma \Inner\,CJ(D)$ 

\noindent In the diagram, the horizontal arrows with two heads are obviously
surjective maps, and those with a hook are obviously injective maps.
By Lemma \ref{cyclicJ} the map $\alpha'$ is onto and by 
Lemma \ref{Scyclic} the map $\beta'$ is one-to-one. By diagram chasing,
$\alpha$ and $\beta$ are isomorphisms. Both assertions follow.

\end{proof}

\noindent
{\it 5.8 Cyclicity Theorem}

\noindent
There is a commutative diagram of natural transformations:

\begin{tabular}{c c c c c c c}

$J\otimes\Id$ & $\to$ & $SJ\otimes\Id$ & $\to$ & $S\otimes\Id$ & 
$\to$  & $T\otimes\Id$ \\

$\hskip6mm\downarrow\Theta_J$ && $\hskip6mm\downarrow\Theta_{SJ}$ && 
$\hskip6mm\downarrow\Theta_S$ && $\hskip6mm\downarrow\Theta_T$ \\

${\cal B}J$ & $\to$ & $\Inner SJ$ & $\to$ & $\Inner S$ & 
$\to$  & $[T,T]$ \\

\end{tabular}

\begin{thm}  The four triples $(J,{\cal B}J,\Inner_{SJ}, \Theta_J)$,
$(SJ,\Inner_{SJ}, \Theta_{SJ})$, 

\noindent $(CJ,\Inner_{CJ} \Theta_{CJ})$ and 
$(T,[T,T],\ \Theta_T)$ are cyclic. Moreover the operads
${\cal J}$,  ${\cal SJ}$, ${\cal CJ}$ and ${\cal T}$ are cyclic.

\end{thm}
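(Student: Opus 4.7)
The plan is to treat the theorem in two halves. The first half---that the four triples $(J,\mathcal{B}J,\Theta_J)$, $(SJ,\Inner\,SJ,\Theta_{SJ})$, $(CJ,\Inner\,CJ,\Theta_{CJ})$ and $(T,[T,T],\Theta_T)$ are cyclic---is already exactly the content of Lemmas \ref{cyclicJ}, \ref{cyclicSJ}, \ref{Scyclic}, and \ref{Tcyclic} respectively. I would simply quote these four lemmas and remark that the commutative diagram of natural transformations displayed immediately before the theorem makes the four cyclic structures compatible with one another, which is why it is legitimate to list them together.

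The second half---that the operads $\mathcal{J},\mathcal{SJ},\mathcal{CJ},\mathcal{T}$ are cyclic---I would derive from the first in two steps. First, apply Lemma \ref{crit} to each cyclic triple: for each pair $(F,G)$ among $(J,\mathcal{B}J)$, $(SJ,\Inner\,SJ)$, $(CJ,\Inner\,CJ)$, $(T,[T,T])$, the criterion yields a natural isomorphism $F\simeq \Sigma\,\Ker\,\Theta$, so that each of the analytic functors $J$, $SJ$, $CJ$, $T$ is cyclic as an analytic functor, i.e.\ realized as a suspension. Second, invoke the Schur--Weyl correspondence recorded in the table inside the proof of Lemma \ref{combiYoung}: on the side of $\fS$-modules, the suspension $\Sigma$ of analytic functors corresponds precisely to the restriction functor $\Res$. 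Consequently, if $F=\Sigma H$ as analytic functors, then the associated $\fS$-module satisfies $\mathcal{F}=\Res\,\mathcal{H}$, which is the definition of a cyclic $\fS$-module given in Section 5.2. Applied to $J,SJ,CJ,T$, this produces the cyclicity of $\mathcal{J},\mathcal{SJ},\mathcal{CJ},\mathcal{T}$ as $\fS$-modules, hence as operads in the sense used in this paper.

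The main obstacle is not located in this final theorem at all: the genuinely nontrivial step is Lemma \ref{cyclicJ}, whose proof of cyclicity for the Jordan triple rests on the freeness statement of Lemma \ref{freeTAG} together with the module-theoretic Lemma \ref{muM} (itself derived via the Schreier--Kurosh--Cohn theorem on submodules of free modules over a free Lie algebra). Once those ingredients are in place, the theorem above is a short bookkeeping exercise: one invocation of Lemma \ref{crit} followed by one invocation of the Schur--Weyl dictionary $\Sigma\leftrightarrow \Res$.
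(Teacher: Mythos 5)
Your proposal is essentially the same as the paper's: cite Lemmas~\ref{Tcyclic}, \ref{Scyclic}, \ref{cyclicJ} and~\ref{cyclicSJ} for the four cyclic triples, then pass from cyclic triples to cyclic analytic functors via Lemma~\ref{crit}, then to cyclic $\fS$-modules via the Schur--Weyl dictionary $\Sigma\leftrightarrow\Res$. That is exactly the paper's route (the paper compresses the last two steps into a single sentence, having already observed in Section~5.3 that a suspension $F=\Sigma E$ yields $\cF(n)=\Res^{\fS_{n+1}}_{\fS_n}\cE(n+1)$).

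The one place where you slide past a real point is the final clause ``hence as operads in the sense used in this paper.'' The paper is explicit that cyclicity of an operad (as in \cite{GK}) is \emph{stronger} than cyclicity of the underlying $\fS$-module: one needs, in addition, a compatibility of the extended $\fS_{n+1}$-action with the operadic composition maps. The authors do not prove this compatibility; they state that the verification is formally identical to the one for the associative operad in \cite{GK} and that the stronger statement is not used later, so they skip it. Your ``hence'' is therefore not justified as written---$\fS$-module cyclicity does not automatically give operadic cyclicity. To match the paper you should either (a) restrict your claim to cyclicity of the $\fS$-modules, which is all that is actually proved and all that is used, or (b) explicitly add the remark that the operadic compatibility is checked as in \cite{GK} for $Ass$ and is omitted. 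As it stands, the proposal conflates two notions that the paper is careful to distinguish, even though the remainder of the argument is correct.
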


\begin{proof} The first Assertions follows from Lemmas
\ref{Tcyclic}, \ref{Scyclic}, \ref{cyclicJ} and \ref{cyclicSJ}.
It follows that the $\fS$-modules ${\cal J}$,  ${\cal SJ}$, ${\cal S}$ 
and ${\cal T}$ are cyclic. For an operad, the definition of cyclicity
requires an additional compatibility condition for the action of the cycle, see \cite{GK}. Since this fact will be of  no use here, the proof will be skipped. It is, indeed, formally 
the same as the proof for  the associative operad, see \cite{GK}.

\end{proof}

\bigskip
\noindent
{\it 5.9 Consequences for the free special Jordan algebras}

\noindent
\begin{cor}\label{B=InnerSJ}
We have ${\cal B}SJ(D)=\Inner\,SJ(D)$ for any $D$.
\end{cor}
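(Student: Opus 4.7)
The plan is to deduce the corollary from the suspension isomorphism established in Lemma~\ref{cyclicSJ} by invoking the general principle of Lemma~\ref{SigmaF=0}(2).

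First, I would set up the natural transformation. For every vector space $V$, the map $\{a\wedge b\}\mapsto \partial_{a,b}$ gives a natural surjection of analytic functors $\pi:{\cal B}SJ\twoheadrightarrow \Inner\,SJ$, since by definition $\Inner\,SJ(V)$ is spanned by the $\partial_{a,b}$. The goal is therefore to prove that $\pi$ is injective. Both ${\cal B}SJ$ and $\Inner\,SJ$ are analytic functors vanishing at $V=0$, because $SJ(0)=0$ forces $\Lambda^2 SJ(0)=0$ and hence both ${\cal B}SJ(0)=0$ and $\Inner\,SJ(0)=0$.

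Next, I would invoke Lemma~\ref{cyclicSJ}, whose second assertion states exactly that the suspension $\Sigma\pi:\Sigma{\cal B}SJ(D)\to \Sigma\,\Inner\,SJ(D)$ is an isomorphism for every $D$. Combined with the vanishing at $0$, the hypotheses of Lemma~\ref{SigmaF=0}(2) are satisfied (take $F={\cal B}SJ$, $G=\Inner\,SJ$, $\Theta=\pi$). That lemma then immediately yields that $\pi$ itself is an isomorphism, which is exactly the content of the corollary.

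There is essentially no obstacle here: the real work was done in Lemma~\ref{cyclicSJ} (through the cyclic triple $(SJ,\Inner\,SJ,\Theta_{SJ})$ and the diagram chase comparing it with the cyclic triples on $J$ and $CJ$). The only thing to be careful about is checking that the natural transformation $\pi$ one uses in Lemma~\ref{SigmaF=0} really matches the suspension map whose bijectivity is asserted in Lemma~\ref{cyclicSJ}; this is a matter of unwinding definitions, since the Schur--Weyl correspondence sends $\Sigma$ to $\Res$ and both functors are built from the same combinatorial datum $\{a,b\}\mapsto \partial_{a,b}$.
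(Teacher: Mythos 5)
Your proposal is correct and matches the paper's proof: both deduce the corollary by combining the second assertion of Lemma~\ref{cyclicSJ} (that $\Sigma{\cal B}SJ(D)\to\Sigma\,\Inner\,SJ(D)$ is an isomorphism) with Lemma~\ref{SigmaF=0}(2). You simply spell out the two routine hypotheses (surjectivity of $\pi$ and vanishing at $V=0$) that the paper leaves implicit.
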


\begin{proof} Lemma \ref{cyclicSJ} shows that the natural map
$\Sigma {\cal B}SJ =\Sigma \Inner\,SJ$ is an isomorphism. 
Thus the corollary follows from Lemma \ref{SigmaF=0}.

\end{proof}

For any $D$, set ${\cal M}(D)=M(D)_{\bf 1^D}$.

\begin{cor}
The space ${\cal M}(D)$ of multilinear missing tetrads is a $\fS_{D+1}$
-module
\end{cor}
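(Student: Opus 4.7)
The plan is to show that the analytic functor $M=CJ/SJ$ is cyclic; under Schur--Weyl duality this is equivalent to ${\cal M}$ being a cyclic $\fS$-module, i.e. ${\cal M}(D)=\Res_{\fS_D}^{\fS_{D+1}}\,{\cal N}(D+1)$ for some $\fS_{D+1}$-module ${\cal N}(D+1)$, which is precisely the required $\fS_{D+1}$-structure on ${\cal M}(D)$.

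First I would define an induced natural transformation $\Theta_M:M\otimes\Id\to MD$. In the commutative square
\[\begin{array}{ccc} SJ\otimes\Id & \hookrightarrow & CJ\otimes\Id \\ \downarrow\Theta_{SJ} & & \downarrow\Theta_{CJ} \\ \Inner\,SJ & \hookrightarrow & \Inner\,CJ \end{array}\]
both vertical maps send $a\otimes v$ to $[a,v]$, so $\Theta_{CJ}$ descends to a well-defined map from the cokernel $M\otimes\Id$ to the cokernel $MD$.

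Next I would verify that $(M,MD,\Theta_M)$ is a cyclic triple. The suspension functor $\Sigma$ is exact on short exact sequences of analytic functors because $\Sigma F(D)=F(1+D)\vert_1$ is just a weight-space summand for the central torus of $GL(1+D)$. Applying $\Sigma$ to the defining exact sequences $0\to SJ\to CJ\to M\to 0$ and $0\to\Inner\,SJ\to\Inner\,CJ\to MD\to 0$, and tensoring the first with $K^D$, I would obtain a commutative diagram with exact rows
\[\begin{array}{ccccccccc} 0 & \to & \Sigma SJ(D)\otimes K^D & \to & \Sigma CJ(D)\otimes K^D & \to & \Sigma M(D)\otimes K^D & \to & 0 \\ & & \downarrow\cong & & \downarrow\cong & & \downarrow & & \\ 0 & \to & \Sigma\Inner\,SJ(D) & \to & \Sigma\Inner\,CJ(D) & \to & \Sigma MD(D) & \to & 0 \end{array}\]
whose left two vertical arrows are isomorphisms by the cyclicity of the triples $(SJ,\Inner\,SJ,\Theta_{SJ})$ and $(CJ,\Inner\,CJ,\Theta_{CJ})$ recorded in the Cyclicity Theorem. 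The five lemma then forces the rightmost vertical arrow to be an isomorphism, which is precisely the cyclicity of $(M,MD,\Theta_M)$.

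Finally, Lemma~\ref{crit} gives $M\simeq\Sigma\,\Ker\,\Theta_M$, so $M$ is cyclic; Schur--Weyl translates this into cyclicity of the $\fS$-module ${\cal M}$ and yields the desired $\fS_{D+1}$-action on ${\cal M}(D)$. The load-bearing technical points are the exactness of $\Sigma$ on short exact sequences of analytic functors (a routine weight-space argument) together with the commutativity check that makes $\Theta_M$ well-defined; neither is truly deep, but they are the only verifications standing in the way of what is otherwise a clean diagram chase built on the Cyclicity Theorem.
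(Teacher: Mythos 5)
Your proof is correct and takes essentially the same approach as the paper: the paper's one-line argument invokes that $\mathcal{SJ}$ and $\mathcal{CJ}$ are compatibly cyclic (Theorem~1) and passes to the quotient, and your five-lemma on suspensions is exactly the verification that this requires. Indeed the paper makes the very same argument explicit --- defining $\Theta_M$ from $\Theta_{CJ}$ and showing the triple $(M\otimes\Id, MD, \Theta_M)$ is cyclic via Lemmas~\ref{cyclicSJ} and~\ref{Scyclic} --- in its proof of Corollary~\ref{corMD}.
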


\begin{proof}  By Theorem 1, ${\cal SJ}$ and ${\cal S}$ are
compatibly cyclic. Therefore ${\cal M}(D)$ is a 
$\fS_{D+1}$-module
\end{proof}

For a Young diagram ${\bf Y}$, denote by $c_i({\bf Y})$ the height
of the $i^{th}$ column.

\begin{lemma}\label{c1Young}  Let ${\bf Y}$ be a Young diagram of size 
$D+1$. Assume that 
${\bf S(Y)}$ occurs in the $\fS_{D+1}$-module ${\cal M}(D)$.

1. We have $c_1({\bf Y})\geq 5$ or $c_1({\bf Y})=
c_2({\bf Y})=4$.

2. If moreover  $D= 2$ or $3$ modulo $4$, then
we have $c_1({\bf Y})\leq D-1$.
\end{lemma}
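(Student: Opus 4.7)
The strategy is to combine two ingredients: (i) the cyclic structure, which says that if ${\bf S(Y)}$ (for $|{\bf Y}|=D+1$) occurs in the $\fS_{D+1}$-module $\mathcal{M}(D)$, then every ${\bf S(Y')}$ with ${\bf Y'}\in\Res\,{\bf Y}$ occurs in $\mathcal{M}(D)$ viewed as an $\fS_D$-module; and (ii) restrictions on which irreducibles can occur in $\mathcal{M}(D)=M_D(D)_{{\bf 1}^D}$.

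First I would establish the key constraint: every $GL(D)$-irreducible summand $L({\bf Y'};D)$ of $M(D)$ satisfies $c_1({\bf Y'})\geq 4$. Since $M$ is an analytic functor, its Schur--Weyl multiplicities are independent of $D$, so this is equivalent to the vanishing $M(D)=0$ for $D\leq 3$, which is exactly the content of Shirshov's and Shirshov--Cohn's theorems. Assuming ${\bf S(Y)}$ appears in $\mathcal{M}(D)$, I would then analyze the corners of ${\bf Y}$: if $c_1({\bf Y})\leq 3$ every element of $\Res\,{\bf Y}$ has $c_1\leq 3$, contradicting (i); if $c_1({\bf Y})=4$ with last row length $m_4=1$, removing the corner at row $4$ produces a ${\bf Y'}$ with $c_1=3$, again contradicting (i). The remaining possibilities are $c_1({\bf Y})\geq 5$ or $m_4\geq 2$; the latter forces $c_2=4$ (since $c_2\leq c_1=4$), giving Part 1.

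For Part 2, the only size-$(D+1)$ Young diagrams with $c_1\geq D$ are ${\bf 1^{D+1}}$ and $(2,1^{D-1})$, and both restrict to a copy of ${\bf S(1^D)}$. So it suffices to show ${\bf S(1^D)}$ does not occur in $\mathcal{M}(D)$ when $D\equiv 2,3\pmod 4$. Since $M_D(D)$ is a $GL(D)$-quotient of $CJ_D(D)=T_D(D)^\sigma$, it is enough to show $L({\bf 1^D};D)$ does not occur in $CJ_D(D)$. The multilinear part $T_D(D)_{{\bf 1}^D}$ is the regular representation of $\fS_D$, and $\sigma$ acts there by right multiplication by the reversal permutation $\omega=(1,D)(2,D-1)\cdots$; on the sign isotypic component $\sigma$ therefore acts by $\operatorname{sgn}(\omega)=(-1)^{\lfloor D/2\rfloor}$, which equals $-1$ precisely when $D\equiv 2,3\pmod 4$. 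In that case ${\bf S(1^D)}$ lies in the $(-1)$-eigenspace of $\sigma$, hence not in $CJ_D(D)$.

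The main conceptual point is step (i): the bound $c_1\geq 4$ is really a Schur--Weyl translation of Shirshov/Shirshov--Cohn, rather than anything operad-theoretic. Once this is in place, both parts become elementary combinatorics on Young diagrams, with Part 2 reduced to a single sign calculation in the regular representation of $\fS_D$.
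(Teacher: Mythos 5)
Your proposal is correct and follows essentially the same route as the paper: Part~1 uses $M(D)=0$ for $D\leq 3$ (the paper cites Cohn's Reversible Theorem for this, you cite Shirshov and Shirshov--Cohn, which give the same vanishing) together with the branching rule to force $c_1\geq 4$ and then $c_2=4$ when $c_1=4$, and Part~2 reduces to the absence of the sign representation in $\mathcal{CJ}(D)$, which the paper also deduces from the observation that the sign representation of $\fS_D$ occurs with multiplicity one in $\mathcal{T}(D)$ and, for $D\equiv 2,3\pmod 4$, sits in the $(-1)$-eigenspace of the reversal involution. Your write-up simply makes the sign-of-$\omega$ computation and the classification of size-$(D+1)$ diagrams with $c_1\geq D$ explicit, but no new idea is introduced.
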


\begin{proof} 
Recall that 

\centerline{${\bf S(Y)}\vert_{\fS_D}=
\oplus_{{\bf Y'}\in \Res {\bf Y}}\,{\bf S(Y')}$.}

\noindent Since $M(3)=0$ by Cohn's reversible Theorem,
$\Res {\bf Y}$ contains no  Young diagram of heigth $<4$.
So it is proved that $c_1(Y)\geq 4$. 
Moreover if $c_1({\bf Y})=4$, removing the bottom  box on the first colum 
does not give rise to a  Young diagram, what forces that
 $c_2({\bf Y})=4$. Assertion 1 is proved.

 Note  that the signature representation of $\fS_D$
 occurs with multiplicity one in ${\cal T}(D)$.
 So if $D=2$ or $D=3$ modulo $4$,
 this representation occurs in the multilinear part of
 $A(D)$, so it does not occur in ${\cal M}(D)$.
 It follows easily that $c_1({\bf Y})\leq D-1$.
 \end{proof}
 
 The Jordan multiplication induces the maps
 $L: CJ_1(D) \otimes M_n(D)\to M_{n+1}(D)$. On the multilinear part,
 it provides a natural map:
 
\centerline{ $L_D:\Ind_{\fS_D}^{\fS_{D+1}}\,{\cal M}(D)\to{\cal M}(D+1)$.}

 \begin{lemma}\label{evenodd} For  $D$ even, the map $L_D$ is onto.
 
 \end{lemma}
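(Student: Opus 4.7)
The plan is to apply the snake lemma to the commutative diagram of $\fS_{D+1}$-equivariant maps with exact rows
\[
\begin{array}{ccccc}
\Ind_{\fS_D}^{\fS_{D+1}}{\cal SJ}(D) & \to & \Ind_{\fS_D}^{\fS_{D+1}}{\cal CJ}(D) & \to & \Ind_{\fS_D}^{\fS_{D+1}}{\cal M}(D) \\
\downarrow L_D^{SJ} & & \downarrow L_D^{CJ} & & \downarrow L_D \\
{\cal SJ}(D+1) & \hookrightarrow & {\cal CJ}(D+1) & \twoheadrightarrow & {\cal M}(D+1)
\end{array}
\]
induced by $0 \to SJ \to CJ \to M \to 0$ together with the Jordan multiplication by a generator. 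Surjectivity of $L_D$ then reduces to the identity ${\cal CJ}(D+1) = \Image(L_D^{CJ}) + {\cal SJ}(D+1)$, i.e., every palindromic multilinear class is congruent modulo ${\cal SJ}(D+1)$ to a sum $\sum_j x_j \circ w_j$ with $w_j$ palindromic in the remaining variables.

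I would first analyze $\Image(L_D^{CJ})$ and its cokernel using the model ${\cal CJ}(n) \simeq K[\fS_n]^{\sigma}$ with $\sigma$ the word-reversal involution; in these terms $L_D^{CJ}$ sends $x_j \otimes w$ to $\tfrac12(x_j w + w x_j)$, so $\Image(L_D^{CJ})$ is spanned by the palindromic symmetrizations by a single generator. A direct combinatorial count of which palindromic pairs are reachable, combined with the fact that the multiplicity of $S(Y)$ in ${\cal CJ}(n)$ equals $\tfrac12(\dim S(Y) + \chi_{S(Y)}(r_n))$, where $r_n$ is the reversal permutation, yields a Young-diagram-by-Young-diagram description of the cokernel as a $\fS_{D+1}$-module.

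Next I would invoke Lemma \ref{c1Young} to restrict the Young diagrams $Y'$ whose isotypical $S(Y')$ may appear in ${\cal M}(D+1)$: these satisfy $c_1(Y') \geq 5$ or $c_1(Y') = c_2(Y') = 4$, and $c_1(Y') \leq D$ when $D \equiv 2 \pmod 4$. Combined with the cokernel description, this narrows the potential $\fS_{D+1}$-irreducibles that could obstruct surjectivity of $L_D$.

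The main obstacle is the parity step: for $D$ even the reversal $r_{D+1}$ has cycle type $(2^{D/2},1)$ in $\fS_{D+1}$, and I expect its character values on the relevant irreducibles to force every $\fS_{D+1}$-isotypical of the cokernel of $L_D^{CJ}$ to already appear in ${\cal SJ}(D+1)$ with the needed multiplicity, so that the cokernel is absorbed into the special-Jordan subspace. Concretely, Cohn's reversible theorem should realize each such class as a Jordan polynomial in the generators precisely when the parity of $D$ produces the required sign cancellation. This is the direct analogue, transposed to degree $D+1$, of the signature-representation parity observation used in the proof of Lemma \ref{c1Young}.
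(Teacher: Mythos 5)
The paper's proof is a single line: it cites the fact, established in the course of the proof of Cohn's Reversible Theorem \cite{McC}, that $CJ_1(D)\cdot CJ_n(D)=CJ_{n+1}(D)$ when $n$ is even. In other words the map $L_D^{CJ}$ is \emph{already} surjective at the $CJ$-level for $D$ even, and surjectivity of $L_D$ on the quotient $M=CJ/SJ$ is immediate. You have correctly identified the right framework (pass to the $CJ$-level, use the palindromic-word model $K[\fS_n]^\sigma$, and note that the parity of $D$ enters through the cycle type of the reversal), but you are aiming at the weaker target ${\cal CJ}(D+1)=\Image(L_D^{CJ})+{\cal SJ}(D+1)$ when in fact the stronger equality $\Image(L_D^{CJ})={\cal CJ}(D+1)$ holds, so the diagram chase involving ${\cal SJ}$ is unnecessary.

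More importantly, what you have written is a plan rather than a proof: the central steps are phrased as "I would analyze", "I would invoke", and "I expect its character values to force...". The character-theoretic route you sketch (decomposing $\Image(L_D^{CJ})$, computing multiplicities via $\tfrac12(\dim S(Y)+\chi_{S(Y)}(r_n))$, invoking Lemma~\ref{c1Young}, then trying to show the cokernel is absorbed into ${\cal SJ}(D+1)$) is never carried out and would be much heavier than what is actually needed. The short argument you are missing is precisely the surjectivity $CJ_1\cdot CJ_{2m}=CJ_{2m+1}$: for a reversible element of odd length $n+1$, the outermost pair of letters can be peeled off as a single Jordan multiplication by a generator times a reversible element of even length $n$, and this is what the reference to \cite{McC} supplies. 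Without that input, your parity discussion of $r_{D+1}$ having cycle type $(2^{D/2},1)$ does not by itself yield the desired absorption.
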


\begin{proof} In the course of the proof of
Cohn's Reversible Theorem \cite{McC}, it appears that
$CJ_1(D).CJ_n(D)=CJ_{n+1}(D)$ when $n$ is even. Therefore the
map $L_D$ is onto for $D$ even.
\end{proof}

 \begin{cor}\label{corM}
1. As a $\fS_5$-module, we have ${\cal M}(4)={\bf S(1^5)}$.

2. As a $\fS_6$-module, we have ${\cal M}(5)={\bf S(2,1^4)}$.

3. As a $\fS_7$-module, we have ${\cal M}(6)={\bf S(3,1^4)}^2$

4.  As a $\fS_8$-module, we have 

\noindent\centerline{
${\cal M}(7)= {\bf S(4,1^4)}^{2}\oplus {\bf S(3,2,1^3)}
 \oplus {\bf S(2^2,1^4)}\oplus {\bf S(3,1^5)}$.}

 \end{cor}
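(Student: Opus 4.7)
The plan is to induct on $D$, starting from Cohn's reversible theorem for $D=4$ and using Lemma \ref{c1Young} (column constraints) together with Lemma \ref{evenodd} (surjectivity of $L_D$ for even $D$) to determine ${\cal M}(D)$ step by step.

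Assertion 1 is the base case: Cohn's theorem gives $M_4(D)\simeq\Lambda^4 K^D$, so as a $\fS_4$-module ${\cal M}(4)={\bf S(1^4)}$. Promoting to a $\fS_5$-module, the only Young diagrams of size 5 whose restriction contains ${\bf S(1^4)}$ are $(1^5)$ and $(2,1^3)$, and Lemma \ref{c1Young} rules out $(2,1^3)$ since $c_1=4$ but $c_2=1$. Hence ${\cal M}(4)={\bf S(1^5)}$.

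For Assertion 2, I apply $L_4$ (surjective by Lemma \ref{evenodd}) to present ${\cal M}(5)|_{\fS_5}$ as a quotient of $\Ind_{\fS_4}^{\fS_5}{\bf S(1^4)}={\bf S(1^5)}\oplus{\bf S(2,1^3)}$. Lemma \ref{c1Young} (since $5\equiv 1\pmod 4$, giving no extra column bound) reduces the candidate $\fS_6$-isotypes of ${\cal M}(5)$ to ${\bf S(1^6)}$ and ${\bf S(2,1^4)}$; matching restrictions forces $m_1+m_2\leq 1$ and $m_2\leq 1$. Fixing $(m_1,m_2)=(0,1)$ follows from $\dim{\cal M}(5)=5$, computed via $\dim{\cal CJ}(5)=5!/2=60$ and the multilinear Jordan operad dimension $\dim{\cal J}(5)=55$ (using Glennie's theorem $SJ_5=J_5$).

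Assertions 3 and 4 proceed analogously. For ${\cal M}(7)$, $L_6$ surjective bounds ${\cal M}(7)|_{\fS_7}$ by $\Ind{\cal M}(6)$; Lemma \ref{c1Young} (with $c_1\leq 6$ since $7\equiv 3\pmod 4$) enumerates the admissible $\fS_8$-isotypes; matching restrictions together with the computed $\dim{\cal M}(7)$ yields the four stated summands. For ${\cal M}(6)$, Lemma \ref{evenodd} does not apply, so instead I compute $\dim{\cal M}(6)=6!/2-\dim{\cal J}(6)=360-330=30$ via Glennie, and apply Lemma \ref{c1Young} (with $c_1\leq 5$ since $6\equiv 2\pmod 4$) to reduce the candidates to ${\bf S(3,1^4)}$ (dimension 15) and ${\bf S(2^2,1^3)}$ (dimension 14); the only non-negative integer combination summing to 30 is $2{\bf S(3,1^4)}$.

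The main obstacle is producing the multilinear Jordan operad dimensions $\dim{\cal J}(n)$, $n\leq 7$, independently: Lemma \ref{c1Young} alone gives only a list of admissible $\fS_{n+1}$-isotypes, and Lemma \ref{evenodd} only upper bounds on multiplicities via $L_D$. These Jordan numbers are classical, computable as quotients of the free commutative non-associative operad (of dimension $(2n-3)!!$ in arity $n$) by the ideal generated by the Jordan identity. Once in hand, dimension-matching together with the column constraints and restriction-matching uniquely pins down ${\cal M}(n)$ as an $\fS_{n+1}$-module.
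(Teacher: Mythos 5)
Your proof is essentially the same as the paper's: you combine the column constraints of Lemma \ref{c1Young}, the surjectivity of $L_D$ for $D$ even from Lemma \ref{evenodd}, and Glennie's computed dimensions $\dim{\cal SJ}(6)=330$, $\dim{\cal SJ}(7)=2345$ to pin down the $\fS_{D+1}$-module structure by dimension count. The paper declares the cases $D=4,5$ ``easy and skipped,'' which you fill in; otherwise the argument is the one the authors give.
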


 \begin{proof}
 The cases $D=4$ or $D=5$ are easy and the proof for those cases is skipped. We have $\dim{\cal M}(D)=D!/2 -\dim {\cal SJ}(D)$ for any $D\geq 1$.
In \cite{Gl}, it is proved that $\dim {\cal SJ}(6)=330$ and
$\dim {\cal SJ}(7)=2345$. Therefore we have

\centerline{$\dim{\cal M}(6)=30$ and    $\dim{\cal M}(7)=175$.}
 
 Let's consider the case $D=6$.  
 The two Young diagrams of size $7$ and heigth $5$
 are ${\bf Y_1=(3,1^4)}$ and ${\bf Y_2=(2^2,1^3)}$.
 By Lemma \ref{c1Young}, ${\bf S(Y_1)}$ and ${\bf S(Y_2)}$ are
 the only possible simple submodules of the
 $\fS_7$-module ${\cal M}(6)$.
 We have $\dim\,{\bf S(Y_1)}=15$ and $\dim\,{\bf S(Y_2)}=14$.
Since $\dim\,{\cal M}(6)=30$, we have 
 ${\cal M}(6)\simeq {\bf S(3,1^4)}^2$.
 
 For $D=7$ , let's consider the following Young diagrams of size $7$

 ${\bf K_1}=\tiny\yng(3,1,1,1,1)$ 
 \hskip7mm ${\bf K_2}=\tiny\yng(2,1,1,1,1,1)$  \hskip7mm
 ${\bf K_3}= \tiny\yng(2,2,1,1,1)$   
 \hskip7mm${\bf K_4}=\tiny\yng(4,1,1,1)$  
 \hskip7mm${\bf K_5}=\tiny\yng(3,2,1,1)$
 
 \noindent We have 
 $\Ind\,\Res\,{\bf S(3,1^4)}={\bf S(K_1)}^2\oplus {\bf S(K_2)}
\oplus  {\bf S(K_3)}\oplus {\bf S(K_4)}\oplus {\bf S(K_5)}$. 
It follows from Lemma \ref{evenodd} that
 
 \centerline{${\cal M}(7)= \oplus_{1\leq i\leq 5}\,{\bf S(K_i)}^{k_i}$}
 
 \noindent where $k_1\leq 4$ and $k_i\leq 2$ for $2\leq i\leq 5$.
 The list of Young diagrams $Y$  such that  
 $\Res\,Y\subset \{K_1, K_2, K_3, K_4, K_5\}$ is

 ${\bf Y_1}=\tiny\yng(4,1,1,1,1)$ 
 \hskip10mm ${\bf Y_2}=\tiny\yng(3,2,1,1,1)$  \hskip10mm
 ${\bf Y_3}= \tiny\yng(2,2,1,1,1,1)$   
 \hskip10mm${\bf Y_4}=\tiny\yng(3,1,1,1,1,1)$  
 
\noindent  If follows that the $\fS_8$-module ${\cal M}(7)$ can be decomposed as 
 
 \centerline{${\cal M}(7)= \oplus_{1\leq i\leq 4}\,{\bf S(Y_i)}^{m_i}$.}
 
 \noindent 
 Since $\dim{\cal M}(7)=175$ while $\dim {\bf S(Y_1)}=35$, $\dim {\bf S(Y_2)}=64$,
 $\dim {\bf S(Y_3)}=20$, and $\dim {\bf S(Y_4)}=21$, it follows that
 
 \centerline{$35 m_1 + 64 m_2 + 20 m_3 + 21.m_4 =175$.}
 
 \noindent The inequality $k_i\leq 2$ for $i\geq 2$ adds the  constraint $m_i\leq 2$ for any $i$. Thus the only possibility is
 $m_1=2$, $m_2=1$, $m_3=1$, $m_4=1$, and therefore
 
 \centerline{${\cal M}(7)= {\bf S(Y_1)}^{2}\oplus {\bf S(Y_2)}
 \oplus {\bf S(Y_3)}\oplus {\bf S(Y_4)}$}.
 
 \end{proof}

  \begin{cor}\label{corMD}
  We have ${\cal MD}(D)=0$ for $D\leq 4$, and

${\cal MD}(5)={\bf S(2,1^3)}$,

${\cal MD}(6)={\bf S(1^6)}\oplus{\bf S(2,1^4)}\oplus{\bf S(3,1^3)}
\oplus{\bf S(2^2,1^2)}$

${\cal MD}(7)=[{\bf S(2,1^5)} \oplus {\bf S(2^2,1^3)}
\oplus {\bf S(3,1^4)}\oplus {\bf S(3,2,1^2)}\oplus
{\bf S(4,1^3)}]^2$, and
 
 $[{\cal MD}(8)]=
 4\, [{\bf  4,1^4}] + 6 \,[{\bf  3,2,1^3}] +  [{\bf  2^2,4}] +  
 5\, [{\bf  3,1^5}] + 2\,[{\bf 2,1^6}] $
 
\hskip20mm $+ 2\,[{\bf 2,1^6}] + 2\,[{\bf 2^3,1^2}] +
 [{\bf 3^2,1^2}] + 3\,[{\bf 4, 2,1^2}] + 2\,[{\bf 5,1^3}]$,
 
 \noindent where $[{\bf Y}]$ stands for the class of ${\bf S(Y)}$, for 
 any Young diagram ${\bf Y}$.
 
 \end{cor}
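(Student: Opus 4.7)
The plan is to exploit the cyclic triples from Theorem~1 to reduce the computation to the already-known $\cM(D-1)$, and then to solve a linear system for the remaining multiplicities.

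For $D\leq 4$, I would use Cohn's Reversible Theorem: the first missing tetrad $t_4$ lives in degree $4$, so $SJ_i(D)=CJ_i(D)$ for every $D$ and every $i\leq 3$. Combined with $\Inner\,CJ=[CJ,CJ]$ and $\Inner\,SJ=[SJ,SJ]$ (Lemma~\ref{=[]} and Corollary~\ref{B=InnerSJ}), this forces $\Inner_n\,CJ(D)=\Inner_n\,SJ(D)$ for every $n\leq 4$, and hence $MD_n(D)=0$ for $n\leq 4$ and every $D$.

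For $D\geq 5$, I would compare the cyclic triples $(SJ,\Inner\,SJ,\Theta_{SJ})$ and $(CJ,\Inner\,CJ,\Theta_{CJ})$ of Lemmas~\ref{cyclicSJ} and~\ref{Scyclic}. The natural inclusions $SJ\hookrightarrow CJ$ and $\Inner\,SJ\hookrightarrow \Inner\,CJ$ intertwine $\Theta_{SJ}$ and $\Theta_{CJ}$, so after applying $\Sigma$ and invoking the cyclic-triple isomorphisms $\Sigma SJ\otimes\Id\simeq\Sigma\Inner\,SJ$ and $\Sigma CJ\otimes\Id\simeq\Sigma\Inner\,CJ$, the cokernel of $\Sigma\Inner\,SJ\hookrightarrow\Sigma\Inner\,CJ$ is identified with that of $\Sigma SJ\otimes\Id\hookrightarrow\Sigma CJ\otimes\Id$. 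This yields the central identity of analytic functors
\[
\Sigma\,MD\simeq \Sigma M\otimes\Id.
\]
Through the Schur--Weyl dictionary of Lemma~\ref{combiYoung} ($\Sigma\leftrightarrow\Res$, $\otimes\Id\leftrightarrow\Ind$), this translates into
\[
\mathcal{MD}(D)\big|_{\fS_{D-1}}\simeq \Ind_{\fS_{D-2}}^{\fS_{D-1}}\bigl(\cM(D-1)\big|_{\fS_{D-2}}\bigr),
\]
whose right-hand side is computable for $5\leq D\leq 8$ by plugging in Corollary~\ref{corM} and applying Pieri's rule.

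To recover $\mathcal{MD}(D)$ itself as a $\fS_D$-module, I would enumerate the Young diagrams $\mathbf{Y}$ of size $D$ whose restriction $\Res\,\mathbf{S(Y)}$ is supported on the set of shapes appearing in $\mathcal{MD}(D)|_{\fS_{D-1}}$ (any $\mathbf{Y}$ failing this must have multiplicity zero), and then solve the resulting linear system for the surviving multiplicities. For $D=5,6,7$ the system is uniquely solvable by direct inspection: for instance when $D=7$ exactly five shapes pass the filter and the six linear equations on the restriction coefficients force each multiplicity to equal $2$, consistently with $\cM(6)=\mathbf{S(3,1^4)}^2$. The main obstacle will be $D=8$: the restriction $\mathcal{MD}(8)|_{\fS_7}$ is large and more candidate shapes survive the filter, so additional input is required. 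The natural source is the analog of Lemma~\ref{evenodd} for $\mathcal{MD}$ --- namely that $L_D\colon\Ind\,\mathcal{MD}(D)\to\mathcal{MD}(D+1)$ is onto for $D$ even, which follows from the same proof using the even-degree half of Cohn's Reversibility --- together with column-height constraints analogous to Lemma~\ref{c1Young}. The hardest step is therefore to rule out the extra Young diagrams of size $8$ that would otherwise contribute to $\mathcal{MD}(8)$ beyond the list given in the statement.
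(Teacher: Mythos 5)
Your first two steps are sound: the vanishing of $\mathcal{MD}(D)$ for $D\leq 4$ via $M_n(D)=0$ for $n\leq 3$ and $\Inner\,CJ=\sum_i[x_i,CJ]$, and the derivation that $(M,MD,\Theta_M)$ is a cyclic triple by comparing the cyclic triples for $SJ$ and $CJ$ and taking cokernels. The gap is in how you exploit cyclicity. You use only the isomorphism $\Sigma M\otimes\Id\simeq\Sigma MD$, which at the $\fS$-module level gives $\Res_{\fS_{D}}^{\fS_{D+1}}\cal{MD}(D+1)=\Ind\Res\,\cal{M}(D)$ as $\fS_{D}$-modules, and then you try to reconstruct the $\fS_{D+1}$-module by a ``filter plus linear system'' on restriction coefficients. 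But that reconstruction is genuinely under-determined already at $D=6$, contrary to your claim: the filter admits the five shapes $(1^6),(2,1^4),(2^2,1^2),(3,1^3),(2^3)$, and the four equations on restriction multiplicities have the two non-negative solutions $(1,1,1,1,0)$ and $(0,2,0,1,1)$. So ``uniquely solvable by direct inspection for $D\leq 7$'' is false, and the $D=8$ case cannot be rescued by an analog of Lemma~\ref{evenodd} alone.

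The paper avoids this entirely by using the \emph{full} content of Lemma~\ref{combiYoung} applied to the cyclic triple $(M,MD,\Theta_M)$: cyclicity identifies $\Ker\Theta_M$ with the $\fS_{D+1}$-module $\cal{M}(D)$ from Corollary~\ref{corM}, and the exact sequence $0\to\Ker\Theta_M\to M\otimes\Id\to MD\to 0$ then gives the class identity
\[
[\cal{MD}(D+1)]=\Ind\circ\Res\,[\cal{M}(D)]-[\cal{M}(D)]\quad\text{in }K_0(\fS_{D+1}),
\]
which determines $[\cal{MD}(D+1)]$ directly as a $\fS_{D+1}$-module, with no reconstruction step. (Your version is exactly the image of this identity under $\Res_{\fS_D}^{\fS_{D+1}}$, using $\Res\circ\Ind=\mathrm{id}+\Ind\circ\Res$, which is why it loses information.) With this formula in hand, the computation for $D\leq 8$ is a straightforward Pieri-rule calculation from Corollary~\ref{corM}, which is what the paper does.
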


\begin{proof} 
The natural transformation $\Theta_{CJ}:CJ\otimes\Id\to \Inner\, CJ$
gives rise to a natual transformation 
$\Theta_M: M\otimes\Id\to MD$.
By Lemmas \ref{cyclicSJ} and \ref{Scyclic}, 
the triple $(M\otimes \Id,MD,\Theta_M)$ is cyclic.
Therefore  the following equality

\centerline{$[{\cal MD}(D+1)]=[\Ind\circ\Res\, {\cal M}(D)]-[{\cal M}(D)]$}

\noindent holds in $K_0(\fS_{D+1})$ by Lemma \ref{combiYoung}.
Since Corollary \ref{corM} provides the character of the 
$\fS_{D+1}$-module ${\cal M(D)}$ for $D\leq 7$, it is possible to
compute the character of ${\cal MD}(D)$ for any $D\leq 8$. The other case
being simpler, some details will be provided for ${\cal MD}(8)$.

 Let's consider the notations of Corollary \ref{corM}. We have
 
 \centerline{$[{\cal M}(7)]=2 \,[{\bf Y_1]+[Y_2]+[Y_3]+[Y_4]}$.}
 
 \noindent It follows that 

\centerline{
 $\Res[{\cal M}(7)]=4\, [{\bf  K_1}] + 2 \,[{\bf  K_2}] + 2\, [{\bf  K_3}] + 
 2\, [{\bf  K_4}]  + [{\bf  K_5}]$, and}
 
 $\Ind\circ\Res[{\cal M}(7)]= 6\, [{\bf  Y_1}] + 7 \,[{\bf  Y_2}] + 2\, [{\bf  Y_3}] +  6\, [{\bf  Y_4}] + 2\,[{\bf 2,1^6}] $
 
\hskip30mm $+ 2\,[{\bf 2,1^6}] + 2\,[{\bf 2^3,1^2}] +$
 $[{\bf 3^2,1^2}] + 3\,[{\bf 4, 2,1^2}] + 2\,[{\bf 5,1^3}]$
 
\noindent from which the formula follows. 

\end{proof}

\bigskip
\noindent
{\it 5.10 Consequence for the free Jordan algebras}

\begin{cor}\label{B=InnerJ}
We have  ${\cal B}_k(J(D))=\Inner_k\,J(D)= \Inner_k\,SJ(D)$ 
 for any $k\leq 8$ and any $D$.
\end{cor}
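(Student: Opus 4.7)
The plan is to establish the corollary by a diagram chase comparing ${\cal B}J(D)$ with ${\cal B}SJ(D)$ in low degree, using Corollary \ref{B=InnerSJ} and Glennie's theorem. The relevant picture is the commutative square of natural maps
\[
\begin{array}{ccc}
{\cal B}J(D) & \twoheadrightarrow & \Inner J(D) \\
\downarrow & & \downarrow \\
{\cal B}SJ(D) & \twoheadrightarrow & \Inner SJ(D)
\end{array}
\]
whose bottom arrow is an isomorphism by Corollary \ref{B=InnerSJ}. The right vertical arrow is well-defined because any inner derivation of $J(D)$ preserves the ideal $SI(D)=\Ker(J(D)\twoheadrightarrow SJ(D))$: for $s\in SI$, $\pi(\partial_{a,b}(s))=\partial_{\pi(a),\pi(b)}(\pi(s))=0$, so $\partial_{a,b}(s)\in SI$. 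It is surjective, since each generator $\partial_{\bar a,\bar b}^{SJ}$ lifts to $\partial_{a,b}^J$.

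The main step is to show that the left vertical arrow is an isomorphism in degree $\leq 8$. By Glennie's theorem, $SI_i(D)=0$ for $i\leq 7$, so $J\twoheadrightarrow SJ$ is an isomorphism in degrees $\leq 7$. The induced surjection $\Lambda^2 J(D)\twoheadrightarrow \Lambda^2 SJ(D)$ has kernel $SI\wedge J$; a homogeneous element $s\wedge a$ of this kernel with $s\in SI_i$, $a\in J_j$, and total degree $n\leq 8$ requires $j\geq 1$ (since $J_0=0$), hence $i\leq 7$ and $s=0$. Thus $\Lambda^2_n J(D)\cong \Lambda^2_n SJ(D)$ for $n\leq 8$. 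The induced map $R(J)(D)\to R(SJ)(D)$ on relation subspaces is surjective by construction (each generator $a\wedge a^2$ maps to $\bar a\wedge\bar a^2$), and its kernel lies in $SI\wedge J$ and hence vanishes in degree $\leq 8$. Passing to quotients yields ${\cal B}_n J(D)\cong {\cal B}_n SJ(D)$ for $n\leq 8$.

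With this in hand, the composite
\[
{\cal B}_k J(D)\longrightarrow {\cal B}_k SJ(D)\longrightarrow \Inner_k SJ(D),
\]
in which both arrows are isomorphisms for $k\leq 8$ (by the previous step and Corollary \ref{B=InnerSJ} respectively), is itself an isomorphism. By naturality of the assignment ${\cal B}\to \Inner$ applied to $\pi:J\to SJ$, this composite agrees with
\[
{\cal B}_k J(D)\twoheadrightarrow \Inner_k J(D)\twoheadrightarrow \Inner_k SJ(D).
\]
Since both arrows in the latter factorization are surjective and their composition is a bijection, each must itself be a bijection. This gives ${\cal B}_k J(D)=\Inner_k J(D)=\Inner_k SJ(D)$ for $k\leq 8$.

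The argument is essentially formal once Glennie's theorem and Corollary \ref{B=InnerSJ} are available, so no serious obstacle is anticipated. The only delicate point is the degree bookkeeping in the second step: the threshold $k\leq 8$ is sharp, as $8$ is the largest $n$ for which every factorization $n=i+j$ with $j\geq 1$ forces $i\leq 7$, which is precisely the Glennie vanishing range.
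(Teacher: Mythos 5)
Your proof is correct, and it reaches the key intermediate isomorphism ${\cal B}_k J(D)\simeq {\cal B}_k SJ(D)$ by a genuinely different route than the paper. The paper deduces it from the cyclicity of $(J,{\cal B}J,\Theta_J)$ (Theorem 1): it writes $\Sigma{\cal B}_k J(D)\simeq \Sigma J_{k-1}(D)\otimes K^D$ and similarly for $SJ$, applies Glennie to identify $\Sigma J_{k-1}$ with $\Sigma SJ_{k-1}$ when $k\leq 8$, and then appeals to Lemma \ref{SigmaF=0} to descend from the suspensions. You instead argue directly at the level of $\Lambda^2$: the kernel of $\Lambda^2 J\to\Lambda^2 SJ$ is $SI\wedge J$, which vanishes in degree $\leq 8$ because $J_0=0$ forces each factor $SI_i$ to sit in degree $i\leq 7$, where it vanishes by Glennie, and the same degree count applies to the relation subspace $R$; the quotient ${\cal B}$ then inherits the isomorphism. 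This avoids invoking the cyclicity machinery for $J$ itself (though both arguments still rest on Corollary \ref{B=InnerSJ}, which does use cyclicity for $SJ$). Your degree-counting argument is more elementary and makes the sharpness of the bound $k\leq 8$ transparent; the paper's suspension argument is shorter once Theorem 1 is in hand and fits the general philosophy of the section. The concluding diagram chase is the same in both: the composite ${\cal B}_k J\to{\cal B}_k SJ\to\Inner_k SJ$ is an isomorphism, it factors through the two surjections ${\cal B}_k J\twoheadrightarrow\Inner_k J\twoheadrightarrow\Inner_k SJ$, and so each is bijective.
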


\begin{proof} By Theorem 1, we have

$\Sigma{\cal B}_k(J(D))\simeq \Sigma J_{k-1}(D)\otimes K^D$, and
$\Sigma{\cal B}_k(SJ(D))\simeq \Sigma SJ_{k-1}(D)\otimes K^D$.

\noindent  By Glennie Theorem,  $J_{k-1}(D)$ and $SJ_{k-1}(D)$ are
isomorphic  for $k\leq 8$. Therefore we have

\centerline{$\Sigma{\cal B}_k(J(D))\simeq \Sigma{\cal B}_k(SJ(D))$}

\noindent for any $k\leq 8$ and any $D$. By Lemma \ref{SigmaF=0}, it follows that ${\cal B}_k(J(D))\simeq {\cal B}_k(SJ(D))$ whenever $k\leq 8$.

Let's consider the commutative diagram

\begin{tabular}{ccc}

\hskip3cm ${\cal B}_k(J(D))$ &  $\buildrel \alpha \over \longrightarrow$ 
&  ${\cal B}_k(SJ(D)$\\

\hskip3cm $\downarrow$ $a$ &  &$\downarrow$ $b$   \\

\hskip3cm $\Inner_k\,J(D)$ & $\buildrel \beta \over \longrightarrow$
& $\Inner_k\,SJ(D)$\\

\end{tabular}

\noindent Observe that all maps are onto. By Corollary \ref{B=InnerSJ}, $b$ is an isomorphism, while it has been proved that $\alpha$ is an isomorphism for
$k\leq 8$. Therefore, the maps $a$ and $\alpha$ are also isomorphism,
what proves Corollary \ref{B=InnerJ}.

\end{proof}

\begin{cor}
The space ${\cal SI}(D)$ of special identities is a $\fS_{D+1}$-module.
\end{cor}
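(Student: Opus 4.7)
The plan is to show that the analytic functor $SI$ is itself cyclic: once this is established, the Schur--Weyl correspondence of Section~5.3 between cyclic analytic functors and cyclic $\fS$-modules immediately gives ${\cal SI}(D)$ the structure of an $\fS_{D+1}$-module, which restricts along the stabilizer inclusion $\fS_D\subset\fS_{D+1}$ of the letter $x_0$ to the given $\fS_D$-module structure.

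By Lemmas \ref{cyclicJ} and \ref{cyclicSJ} the triples $(J,{\cal B}J,\Theta_J)$ and $(SJ,\Inner\,SJ,\Theta_{SJ})$ are cyclic, so Lemma \ref{crit} gives natural isomorphisms $J\cong \Sigma\widetilde J$ and $SJ\cong\Sigma\widetilde{SJ}$, where $\widetilde J:=\Ker\,\Theta_J$ and $\widetilde{SJ}:=\Ker\,\Theta_{SJ}$. The leftmost square of the diagram at the start of Section~5.8 displays the natural quotient $\pi:J\twoheadrightarrow SJ$, tensored with $\Id$, as a morphism of cyclic triples. Consequently it restricts to a natural transformation $\widetilde\pi:\widetilde J\to\widetilde{SJ}$ between the kernels, and, tracing the identifications in the proof of Lemma~\ref{crit}, one sees that $\Sigma\widetilde\pi$ agrees with $\pi$ under $J\cong\Sigma\widetilde J$ and $SJ\cong\Sigma\widetilde{SJ}$. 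Since the suspension $\Sigma$ picks out the weight-$1$ eigenspace of the semisimple $K^*$-action on $K.x_0$ it is exact, and so
\[SI \;=\;\Ker\,\pi \;\cong\;\Sigma\,\Ker\,\widetilde\pi,\]
exhibiting $SI$ as the suspension of the analytic functor $\widetilde{SI}:=\Ker\,\widetilde\pi$, hence as a cyclic analytic functor.

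The only step requiring care is the identification $\Sigma\widetilde\pi=\pi$. The proof of Lemma~\ref{crit} identifies $F(V)$ with the ``$x_0$-summand'' $F(V)\otimes x_0\subset \Sigma(F\otimes\Id)(V)$; because the horizontal arrow in the commutative square is $\pi\otimes\Id$ and acts as $\pi$ on the $x_0$-summand, naturality forces the identification to intertwine $\Sigma\widetilde\pi$ with $\pi$. Everything else is a formal diagram chase, and the resulting $\fS_{D+1}$-action on ${\cal SI}(D)\cong \widetilde{SI}(D+1)_{\mathbf{1}^{D+1}}$ is exactly the action permuting the $D+1$ letters $x_0,x_1,\dots,x_D$.
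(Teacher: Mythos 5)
Your proof is correct and follows essentially the same approach as the paper: both use the cyclicity of the triples $(J,{\cal B}J,\Theta_J)$ and $(SJ,\Inner\,SJ,\Theta_{SJ})$ and the compatibility of the quotient map $\pi:J\to SJ$ with these cyclic structures (the leftmost square of the commutative diagram in Section~5.8). The paper's published proof simply asserts that ${\cal J}(D)\to{\cal SJ}(D)$ is $\fS_{D+1}$-equivariant; you make this explicit by exhibiting $SI$ as a suspension $\Sigma\Ker\widetilde\pi$, carefully tracing the naturality of the Lemma~\ref{crit} identification with respect to morphisms of cyclic triples — a worthwhile expansion of a step the paper treats as immediate.
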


\begin{proof}
By Theorem 1, ${\cal J}$ and ${\cal SJ}$ are
cyclic  and the map ${\cal J}(D)\to{\cal SJ}(D)$ is
$\fS_{D+1}$-equivariant. Therefore ${\cal SI}(D)$ is a 
$\fS_{D+1}$-module.
\end{proof}

For example, let $G$ be the multilinear part of the Glennie Identity. As an
element of ${\cal SI}(8)$, it generates a simple
$\fS_8$ module $M\simeq {\bf S(3^2)}$. What is the
$\fS_9$-module $\hat{M}$ generated by $G$ in ${\cal SI}(8)$? It is clear that there are only two possibilities

A) $M\simeq {\bf S(3^3)}$. In such a case, ${\hat M}=M$.

B) ${\hat M}\simeq  {\bf S(3^2,2,1)}$. 

\noindent If so,  $\Res\,{\hat M}\simeq {\bf S(3^2,2)}\oplus {\bf S(3,2^2,1)}\oplus {\bf S(3^2,1^2)}$. This would provide two  independent new special identities
in $J(4)$. When computing  the simplest of these two identities,
we found a massive expression. Unfortunately, it was impossible to decide if this special identity is zero or not.

\section*{6. The Conjecture 3}

\noindent
Conjecture 2 is quite natural. 
However, the vanishing of $H_*(\fsl_2\,J(D))^{ad}$ 
does not look very tractable. Conjecture 3
is a weaker and better version. 
As a consequence of Theorem 1, it will be proved that
it is nevertheless  enough to deduce Conjecture 1.

\begin{conj}
We have $H_k(\fsl_2\,J(D))^{\fsl_2}=0$  for any $k\geq 1$.
\end{conj}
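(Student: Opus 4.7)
The natural plan is an induction on $k$, extending the method of Section~4. The starting point is the identification
\[
\Sigma\,\Lambda^k\fsl_2\,J(D) \;\cong\; \Lambda^{k-1}\fsl_2\,J(D)\otimes\Sigma\,\fsl_2\,J(D),
\]
valid at the level of analytic functors and compatible with the Chevalley--Eilenberg differential, which yields
\[
\Sigma\,H_k(\fsl_2\,J(D))^{\fsl_2} \;\cong\; H_{k-1}\bigl(\fsl_2\,J(D),\,\Sigma\,\fsl_2\,J(D)\bigr)^{\fsl_2}.
\]
Since $\fsl_2\,J(0)=0$, Lemma~\ref{SigmaF=0} reduces the conjecture for $k\geq 2$ to the vanishing of these right-hand sides. (The case $k=1$ is immediate, as $H_1(\fsl_2\,J(D))=K^D\otimes L(2)$ has no $\fsl_2$-invariants.)

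This suggests strengthening the target to the following statement, to be proved by induction on $j\geq 2$: \emph{for every $M\in{\cal M}_{\bf T}^{gr}(\fsl_2\,J(D))$, $H_j(\fsl_2\,J(D), M)^{\fsl_2}=0$}. The base case $j=2$ is Lemma~\ref{techvan}. For the inductive step, given such $M$, I would choose a short exact sequence
\[
0\to N\to P\to M\to 0
\]
in ${\cal M}_{\bf T}(\fsl_2\,J(D))$ with $P$ a direct sum of ``free'' objects --- in the spirit of Lemma~\ref{muM}, free $\fsl_2\,J(D)$-modules in ${\bf T}$ generated by an $\fsl_2$-summand of trivial or adjoint type. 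Both $P$ and $N$ can be arranged to lie in ${\cal M}_{\bf T}^{gr}$, and the long exact sequence combined with the inductive hypothesis applied to $N$ reduces the problem to showing $H_{j+1}(\fsl_2\,J(D), P)^{\fsl_2}=0$ for all such free $P$ and all $j+1\geq 3$.

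This last vanishing is the main obstacle. For an ordinary free Lie algebra the analogous modules have no higher homology, but here the ${\bf T}$-constraint forces the $L(4)$-component of $\Lambda^2\fsl_2\,J(D)$ to vanish (encoding the Jordan identity through the relation $\{a\wedge a^2\}=0$ in ${\cal B}J$), which could a priori produce nontrivial higher homology. One natural route is to construct, using the ${\bf Lie_T}$-freeness of $\fsl_2\,J(D)$ (Lemma~\ref{freeTAG}), an explicit Koszul-type resolution adapted to the relative category of ${\bf T}$-modules, hoping it is concentrated in low homological degrees because all relations defining ${\bf Lie_T}$-freeness are quadratic. An alternative route, motivated by the $D=1$ case where Garland--Lepowski's theorem settles the conjecture via affine Kac--Moody theory, would be to find a generalized Kac--Moody-like structure on $\fsl_2\,J(D)$ admitting a BGG-type resolution. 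Either route requires substantial new input beyond what is developed here, which is why the statement is posed as a conjecture.
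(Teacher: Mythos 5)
The statement you were given is Conjecture~3, which the paper itself does \emph{not} prove: Section~4 establishes only the cases $k=1,2,3$, and the authors explicitly leave the general statement open. So there is no paper proof to compare against, and your decision to present a strategy-with-an-identified-gap rather than a complete argument is the honest and correct call.

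Your sketch accurately reproduces the mechanism the paper uses in Section~4.5. The identity $\Sigma\,\Lambda^k\fsl_2\,J(D)\cong\Lambda^{k-1}\fsl_2\,J(D)\otimes\Sigma\,\fsl_2\,J(D)$, the resulting shift $\Sigma\,H_k(\fsl_2\,J(D))^{\fsl_2}\cong H_{k-1}(\fsl_2\,J(D),\Sigma\,\fsl_2\,J(D))^{\fsl_2}$, and the passage back via Lemma~\ref{SigmaF=0} (using $\fsl_2\,J(0)=0$) are exactly how the paper converts the $H_2$-vanishing with coefficients of Lemma~\ref{techvan} into the $H_3$-vanishing. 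Your strengthening to coefficient modules $M\in{\cal M}_{\bf T}^{gr}(\fsl_2\,J(D))$, with base case $j=2$ given by Lemma~\ref{techvan}, is the natural inductive framework.

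You also put your finger on precisely where the argument stops. Lemma~\ref{techvan} is a direct translation of the freeness of $\fsl_2\,J(D)$ in ${\bf Lie_T}$: any abelian extension in that category splits, which is the universal-algebra statement encoding $H^2$-vanishing. Freeness in a variety gives no comparable control over $H_j$ for $j\geq 3$, and the reduction via a short exact sequence $0\to N\to P\to M\to 0$ to the vanishing of $H_{j+1}(\fsl_2\,J(D),P)^{\fsl_2}$ for $P$ free in ${\cal M}_{\bf T}$ is indeed an open problem: unlike the absolute free Lie algebra case, $\fsl_2\,J(D)$ is only relatively free, and its free ${\bf T}$-modules need not be acyclic. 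Your two proposed routes (a Koszul-type resolution adapted to ${\bf T}$, or a BGG-type resolution generalizing the Garland--Lepowski picture in $D=1$) are plausible directions but, as you say, require substantially new input. In short, your analysis matches the state of the art recorded in the paper, correctly reproduces the proved cases, and correctly identifies the obstruction that makes this a conjecture rather than a theorem.
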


Note that Conjecture 3 holds for $k=1,2$ and $3$,
as it was proved in section 4.

\begin{thm}
If Conjecture 3 holds for $\fsl_2\,J(1+D)$, then 
Conjecture 1 holds for $\fsl_2\,J(D)$.
\end{thm}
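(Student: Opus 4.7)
The plan is to exploit the $x_0$-grading on $\fsl_2\,J(1+D)$ induced by the distinguished generator $x_0\in J(1+D)$, combining the Euler characteristic argument of Corollary~\ref{2=>1} with the cyclicity of the Jordan operad (Theorem 1). Since $\fsl_2$ acts trivially on $J(1+D)$, this $\Z$-grading commutes with the $\fsl_2$-action, so Conjecture 3 for $1+D$ forces $H_k(\fsl_2\,J(1+D))^{\fsl_2}=0$ in every $x_0$-degree separately.

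The $x_0$-degree $0$ subcomplex of $\Lambda^\bullet\fsl_2\,J(1+D)$ is the Chevalley--Eilenberg complex of $\fsl_2\,J(D)$, so Conjecture 3 for $1+D$ immediately implies Conjecture 3 for $D$. Lemma~\ref{nocenter} then gives ${\cal B}J(D)=\Inner\,J(D)$, and the Euler characteristic for $\fsl_2\,J(D)$ produces the first defining equation of Lemma~\ref{combi}, namely
\[
\lambda([J(D)][L(2)]+[\Inner\,J(D)]):[L(0)]=1.
\]

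The main step is to extract the second defining equation from the $x_0$-degree $1$ part of the Euler characteristic for $\fsl_2\,J(1+D)$. A wedge product in $\Lambda^k\fsl_2\,J(1+D)$ of total $x_0$-degree $1$ has exactly one factor of $x_0$-degree $1$, so the $x_0$-degree $1$ part of $\Lambda^k\fsl_2\,J(1+D)$ is $\Lambda^{k-1}\fsl_2\,J(D)\otimes\Sigma\fsl_2\,J(D)$. The $\fsl_2$-equivariant splitting $\Sigma\fsl_2\,J(D)=\Sigma{\cal B}J(D)\oplus\fsl_2\otimes\Sigma J(D)$ into trivial and adjoint pieces gives, after taking invariants, two contributions proportional to $[\Lambda^{k-1}\fsl_2\,J(D)]{:}[L(0)]$ and to $[\Lambda^{k-1}\fsl_2\,J(D)]{:}[L(2)]$. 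Summing with alternating signs, setting the total to zero by Conjecture 3 for $1+D$, and substituting $\lambda([\fsl_2\,J(D)]){:}[L(0)]=1$ from the previous paragraph, one arrives at
\[
[\Sigma{\cal B}J(D)]+\lambda([\fsl_2\,J(D)]){:}[L(2)]\cdot[\Sigma J(D)]=0.
\]
The cyclicity Lemma~\ref{cyclicJ} supplies $[\Sigma{\cal B}J(D)]=[\Sigma J(D)]\cdot[K^D]$, and since ${\cal R}_{an}(GL(D))\simeq\Z[[z_1,\dots,z_D]]^{\fS_D}$ is an integral domain and $[\Sigma J(D)]$ is non-zero (it contains the class of $x_0$), cancelling $[\Sigma J(D)]$ yields $\lambda([\fsl_2\,J(D)]){:}[L(2)]=-[K^D]$. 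The uniqueness clause in Lemma~\ref{combi} then forces $[J(D)]=A(D)$ and $[\Inner\,J(D)]=B(D)$, which is Conjecture 1.

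The main thing to watch is the compatibility of gradings and actions: the $\fsl_2$-splitting of $\Sigma\fsl_2\,J(D)$ is not stable under the $\fsl_2\,J(D)$-module structure, but this is immaterial since only the Euler characteristic in $K_0$ is tracked. The cyclicity Theorem 1 enters precisely to convert the $L(0)$-channel class $[\Sigma{\cal B}J(D)]$ into the $L(2)$-channel class $[\Sigma J(D)]\cdot[K^D]$, thereby bridging the two defining equations of Lemma~\ref{combi}.
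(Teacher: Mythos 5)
Your argument is essentially identical to the paper's: the paper likewise extracts the $x_0$-degree $0$ part of $H_*(\fsl_2\,J(1+D))$ to get ${\cal E}_1$ via Lemma~\ref{nocenter}, reads the $x_0$-degree $1$ part as $H_*(\fsl_2\,J(D),\Sigma\fsl_2\,J(D))$ and decomposes $[\Sigma\fsl_2\,J(D)]$ into trivial and adjoint pieces, uses Theorem 1 to rewrite $[\Sigma{\cal B}J(D)]=[K^D][\Sigma J(D)]$, and cancels $[\Sigma J(D)]$ in the integral domain ${\cal R}_{an}(GL(D))$ to obtain ${\cal E}_2$. The only cosmetic difference is that the paper phrases the degree-splitting as ``$H_*(\fsl_2\,J(D))$ and $\Sigma H_*(\fsl_2\,J(D))$ are summands of $H_*(\fsl_2\,J(1+D))$'' while you spell out the $x_0$-grading on the Chevalley--Eilenberg complex directly.
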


\begin{proof} 
The proof is similar to the proof of Corollary 1.
Assume Conjecture 3 holds for $\fsl_2\,J(1+D)$.

Since $H_*(\fsl_2\,J(D))$ is a summand  in $H_*(\fsl_2\,J(1+D)))$,
it follows that 
  $H_k(\fsl_2\,J(D))^{\fsl_2}=0$ for any $k\geq 1$.
By Lemma \ref{nocenter}, this implies that ${\cal B}J(D)=\Inner J(D)$.
As in the proof  of Corollary 1,  we get that

$({\cal E}_1)$\centerline{$[\lambda [\fsl_2\,J(D)]:L(0)]=1$} 

\noindent where $[\fsl_2\,J(D)]$ denotes the class of 
$\fsl_2\,J(D)$ in ${\cal M}_{an}(GL(D)\times PSL(2))$.

Similarly $\Sigma H_*(\fsl_2\,J(D))$ it is a component of
$H_*(\fsl_2\,J(1+D))$, and therefore
$\Sigma H_*(\fsl_2\,J(D))^{\fsl_2}$ vanishes.
The complex computing $\Sigma H_*(\fsl_2\,J(D))$ is
$\Lambda\, \fsl_2\,J(D)\otimes \Sigma\fsl_2\,J(D)$. 
It follows that

\centerline{
$[(\lambda [\fsl_2\,J(D)]. [\Sigma\,\fsl_2\,J(D)]):L(0)]=0$.}

\noindent Using that $[\Sigma\,\fsl_2\,J(D)]=
[\Sigma{B}J(D)] + [J(D)].[L(2)]$, the previous equation
can be rewritten as:

\centerline{$[\Lambda [\fsl_2\,J(D)]:L(0)][\Sigma{B}J(D)]
+ [\lambda [\fsl_2\,J(D)]:L(2)] [\Sigma J(D)]=0$}

\noindent It has been proved that $[\lambda [\fsl_2\,J(D)]:L(0)]=1$.
Moreover by Theorem 1, we  have $[\Sigma{B}J(D)]=[K^D][\Sigma J(D)]$.
Therefore the previous equation can be simplified into

\centerline{$([K^D]+[\lambda [\fsl_2\,J(D)]:L(2)]).[\Sigma J(D)]=0$.}

\noindent  The ring ${\cal R}_{an}(GL(D))$ is ring of formal series, see
\cite{Macbook} or Section 1.4, and it has no zero divisors.  It follows that

 $({\cal E}_2)$
 \centerline{$[\lambda [\fsl_2\,J(D)]:L(2)]=-[K^D].$}

Using that ${\cal B}J(D)=\Inner J(D)$, Equations ${\cal E}_1$ and
${\cal E}_2$ implies that:

\centerline{
$\lambda([J(D)][L(2)]+[\Inner J(D)]):[L(0)]=1$, and}

\centerline{
$\lambda([J(D)][L(2)]+[\Inner J(D)]):[L(2)]=-[K^D]$.}

\noindent So by Lemma \ref{combi}, Conjecture 3 implies Conjecture 1.

\end{proof}


\begin{thebibliography}{9}



\bibitem{AG} B.~N.~Allison, Y.~Gao, {\it Central quotients and coverings of Steinberg unitary Lie algebras,} Canad. J. Math. 48 (1996) 449-482.

\bibitem{Smirnov} D.~M.~Caveny, O.~M.~Smirnov,  {\it Categories of Jordan Structures and Graded Lie Algebras,} Comm. in Algebra 42 (2014) 186-202.

 \bibitem{Co} P.  M.  Cohn, {\it On a  generalization of the Euclidean algorithm,} Proc. Cambridge Philos. Soc. 57 (1961) 18-30.

\bibitem{GL} H. Garland, J. Lepowski, {\it J. Lie algebra homology and the Macdonald-Kac formulas,} Invent. Math. 34 (1976) 37-76.

\bibitem{GK} E. Getzler,  M. M. Kapranov  {\it Modular operads,} Compositio Math.  110 (1998)  65-125.


\bibitem{Gl}
C.M. Glennie, {\it Some identities valid in special Jordan algebras but not valid in all Jordan algebras,} Pacific J. Math. 16 (1966) 47-59.


\bibitem{HW} G. H. Hardy et E. M. Wright, {\it  An introduction to the theory of numbers},  Clarendon Press, (1938).

\bibitem{HS} G. Hochschild, J.-P. Serre, {\it Cohomology of group extensions,} Transactions of AMS  74 (1953) 110-134.
 
\bibitem{J}
N. Jacobson, {\it Structure and representations of Jordan algebras},   AMS Colloquium Publications 39 (1968).

\bibitem{Kan} I. L. Kantor, {\it Classification of irreducible transitively differential groups,}
Soviet Math. Dokl., 5 (1964) 1404-1407.


\bibitem{Koe}
M. Koecher, {\it The Minnesota Notes on Jordan Algebras and Their Applications},  Springer-Verlag.  Lecture Notes in Mathematics 1710 (1999).

\bibitem{Ko} J.-L, Koszul, {\it Homologie et cohomologie des alg\'ebres de Lie,}  Bulletin de la S. M. F. 78 (1950)  65-127.

\bibitem{Ku}
A. D.  Kurosh, {\it The theory of groups},  Chelsea, New York, 1956.

\bibitem{LM} M. Lau, O. Mathieu, {\it In preparation}.



\bibitem{Le} J. Lewin
{\it Free Modules Over Free Algebras and Free Group Algebras: the Schreier  Technique,}
Transactions of AMS 145  (1969) 455-465.

\bibitem{McC}
K. McCrimmon, {\it A Taste for Jordan algebras,} SpringerVerlag. Universitext 200 (2004).

\bibitem{Mac}
I. G. Macdonald,  {\it Jordan algebras with three generators,} Proc. London Math. Soc. 10 (1960) 
395-408.

\bibitem{Macbook}
I. Macdonald, {\it Symmetric functions and Hall polynomials}, Oxford Math. Monographs.  Oxford University Press, (1995).

\bibitem{M} O. Mathieu {\it Hidden $\Sigma_{n+1}$-actions}, 
Comm. in Math. Phys. 176 (1996) 467-474. 

\bibitem{Med} Yu. A, Medveev. {\it On the nil-elements of a free Jordan algebra,} Sib. Mat. Zh. 26 (2)  (1985) 140–148.

\bibitem{Med2} Yu. A, Medveev {\it Free Jordan algebras,} Algebra and Logic  27 (2) (1988) 110–127.

\bibitem{RNA}  K. A. Zhevlakov, A. M. Slinko, I. P. Shestakov, A. I. Shirshov {\it Rings that nearly Associative,}  Academic Press (1982).

\bibitem{Sverch} S. R. Sverchkov, {\it The Lie algebra of skew-symmetric elements and its application in the theory of Jordan algebras,} Sib. Mat. Zh. 51 (3) (2010) 496-506.

\bibitem{T} J. Tits, {\it Une classe d'alg\'ebres de Lie en relation avec les alg\'ebres de Jordan,}
Indag. Math. 24 (1962) 530-534.

\bibitem{W} E. Weisstein, {\it Necklace}.  From MathWorld--A Wolfram Web Resource. http://mathworld.wolfram.com/Necklace.html 


\bibitem{Z} E. I. Zel'manov, {\it  Absolute
Zero-divisor and algebraic Jordan algebras,} Sib. Mat. Zh.
23 (6) (1982), 841–854.

\bibitem{Z2} E. I. Zel'manov, {\it On prime Jordan algebras. II}, Sib. Mat. Zh., 25 (5)  (1984)  50-61.

\end{thebibliography}
\end{document}